\newtheorem{theorem}{Theorem}[section]
\newtheorem{lemma}[theorem]{Lemma}
\newtheorem{claim}[theorem]{Claim}
\newtheorem{conjecture}[theorem]{Conjecture}
\newtheorem{quest}[theorem]{Question}
\def\lc{\left\lceil}   
\def\rc{\right\rceil}
\def\lf{\left\lfloor}   
\def\rf{\right\rfloor}
\pgfplotsset{compat=1.18}
\begin{document}
\title{Counting independent sets in regular graphs with bounded independence number}
\author{David Galvin\thanks{Department of Mathematics, University of Notre Dame, Notre Dame IN 46556; dgalvin1@nd.edu. Supported in part by the Simons Foundation.} \\ Phillip Marmorino\thanks{Department of Mathematics, University of Notre Dame, Notre Dame IN 46556; pmarmori@nd.edu.}}

\maketitle

\begin{abstract}
An $n$-vertex, $d$-regular graph can have at most $2^{n/2+o_d(n)}$ independent sets. In this paper we address what happens with this upper bound when we impose the further condition that the graph has independence number at most $\alpha$. 

We give upper and lower bounds that in many cases are close to each other. In particular, for each $0 < c_{\rm ind} \leq 1/2$ we exhibit a constant $k(c_{\rm ind})$ such that
\begin{itemize}
\item If $(G_n)_{n \in {\mathbb N}}$ is a sequence of graphs with $G_n$ $d$-regular on $n$ vertices and with maximum independent set size at most $\alpha$, with $d\rightarrow \infty$ and $\alpha/n \rightarrow c_{\rm ind}$ as $n \rightarrow \infty$, then $G_n$ has at most $k(c_{\rm ind})^{n+o(n)}$ independent sets, and
\item there is a sequence $(G_n)_{n \in {\mathbb N}}$ of graphs with $G_n$ $d$-regular on $n$ vertices ($d \leq n/2$) and with maximum independent set size at most $\alpha$, with $\alpha/n \rightarrow c_{\rm ind}$ as $n \rightarrow \infty$ and with $G_n$ having at least $k(c_{\rm ind})^{n+o(n)}$ independent sets.
\end{itemize}
We also consider the regime $1/2 < c_{\rm ind} < 1$. Here for each $0 < c_{\rm deg} \leq 1-c_{\rm ind}$ we exhibit a constant $k(c_{\rm ind},c_{\rm deg})$ for which an analogous pair of statements can be proven, except that in each case we add the condition $d/n \rightarrow c_{\rm deg}$ as $n \rightarrow \infty$. 

Our upper bounds are based on graph container arguments, while our lower bounds are constructive. 
 \end{abstract}

\section{Introduction and statement of results}

Let $i(G)$ denote the number of independent sets (sets of mutually non-adjacent vertices) admitted by a graph $G$, and denote by $i_k(G)$ the number of independent sets of size $k$. Trivially $i(G) \leq 2^n$ for any $n$-vertex graph. There has been much work in recent years devoted to the extremal enumerative question of maximizing (or minimizing) $i(G)$ when $G$ is restricted to run over various classes of $n$-vertex graphs, such as trees \cite{ProdingerTichy}, graphs with a given number of vertices and edges \cite{CutlerRadcliffe}, graphs with a given minimum degree \cite{Chase, EngbersGalvin, GanLohSudakov, Galvin2}, and regular graphs with girth conditions \cite{PerarnauPerkins}. (This is very much a partial list.) 

\medskip

Our focus will be on the family of regular graphs, a family to which much attention has been given in the context of the extremal enumerative question. Say that a graph is an {\it $(n,d)$-graph} if it has $n$ vertices and is $d$-regular. Granville, while considering a problem from combinatorial group theory, conjectured that every $(n,d)$-graph satisfies 
\begin{equation} \label{Granville}
i(G) \leq 2^{n\left(1/2+o(1)\right)}
\end{equation}
where $o(1)\rightarrow 0$ as $d \rightarrow \infty$ (see \cite{Alon}). Note that any bipartite $(n,d)$-graph witnesses that this bound is tight up to the $o(1)$ term.

Granville's conjecture has generated much follow-up work. Alon \cite{Alon} proved the conjecture, showing that the $o(1)$ term in \eqref{Granville} can be taken to be $O\left(d^{-1/10}\right)$ and he further speculated that for an $(n,d)$-graph $G$ with $2d|n$ we have $i(G) \leq i(G_{n,d})$ where $G_{n,d}$ is the disjoint union of $n/2d$ copies of the complete bipartite graph $K_{d,d}$. Note that $$
i(G_{n,d})=(2^{n+1}-1)^{n/2d} = \exp_2\left\{\frac{n}{2}\left(1 + \frac{1+o(1)}{d}\right)\right\}
$$ 
with $o(1) \rightarrow 0$ as $d \rightarrow \infty$.

Using graph containers Sapozenko improved Alon's bound on the $o(1)$ term in \eqref{Granville} to $O\left(\sqrt{(\log d)/d}\right)$ \cite{Sapozhenko}. Around the same time Kahn \cite{Kahn} used entropy methods to establish $i(G) \leq i(G_{n,d})$ for all {\it bipartite} $(n,d)$-graphs with $2d|n$, and also (personal communication to the first author, see \cite{MadimanTetali} for a proof) he showed that for general (not necessarily bipartite) $(n,d)$-graphs the $o(1)$ term in \eqref{Granville} may be taken to be $2/d$, only a factor of $2$ away from optimal. 

By combining containers and entropy Galvin \cite{Galvin} improved Kahn's $2/d$ to $(1+o(1))/d$, and Galvin and Zhao \cite{GalvinZhao} showed that $i(G) \leq i(G_{n,d})$ for all $(n,d)$-graphs with $2d|n$ and $d \leq 5$. Finally Zhao \cite{Zhao} obtained the tight result that $i(G) \leq i(G_{n,d})$ for all $(n,d)$-graphs with $2d|n$, via a clever reduction to the bipartite case. Other proofs of this result have subsequently been found by Davies, Jenssen, Perkins and Roberts \cite{DaviesJenssenPerkinsRoberts}, by Lubetzky and Zhao \cite{LubetzkyZhao}, and (via a significant generalization to graphs that are not necessarily regular) by Sah, Sawhney, Stoner and Zhao \cite{SahSawhneyStonerZhao}.  

\medskip

A much older variant of the extremal enumerative question for independent sets involves putting an upper bound on the independence number of $G$. Denote by $Z(n, \alpha)$ the $n$-vertex graph consisting of a disjoint union of $\alpha$ cliques, with orders as near equal as possible. Equivalently $Z(n,\alpha)$ is the complement of the $n$-vertex, $\alpha$-class Tur\'an graph ($Z$ here stands for Zykov; the reason for this choice of notation will become clear in a moment). Tur\'an's theorem \cite{Turan} (applied to the complement of a graph) says that among graphs with $n$ vertices and with maximum independent set size at most $\alpha$, it is $Z(n, \alpha)$ that uniquely maximizes the count of independent sets of size $2$. Zykov \cite{Zykov} generalized Tur\'an's theorem, showing that among $n$ vertex graphs with maximum independent set size at most $\alpha$ it is $Z(n, \alpha)$ that uniquely maximizes the count of independent sets of size $k$, for any $k$. An immediate consequence of this is that if $G$ has $n$ vertices and maximum independent set size at most $\alpha$ then $i(G) \leq i(Z(n,\alpha))$.

\medskip

The main point of this paper is to consider what happens when the two restrictions discussed above --- regularity and upper bound on the size of the maximum independent set --- are combined. 

Say that a graph is an $(n,d,\alpha)$-graph if it is an $(n,d)$-graph and has largest independent size at most $\alpha$. To avoid trivialities we assume throughout that $d \geq 1$, in which case we may also assume $\alpha \leq n/2$ (since an $(n,d)$ graph with $d \geq 1$ has maximum independent set size at most $n/2$). If $\alpha=n/2$ we are placing no restriction beyond that $G$ is an $(n,d)$-graph, and so the results of Kahn and Zhao cited earlier settle the question of maximizing the count of independent sets in this case. (At least when $d \leq n/2$; see below for a discussion of the case $d > n/2$.) 

We also note that if $\alpha=1$ then there is a unique $(n,d,\alpha)$-graph when $d=n-1$ (the complete graph), and there are no $(n,d,\alpha)$-graphs for any other choices of $d$. In this case all extremal enumerative questions are trivial, and so from here on we assume $\alpha \geq 2$. By the same token (uniqueness or non-existence of $(n,d,\alpha)$-graphs) there is no loss in assuming $n \geq 6$ and $d \geq 2$.

All bipartite $(n,d)$-graphs with $n$ even (which witness that the largest number of independent sets admitted by an $(n,d)$-graph is at least $2^{n/2+o(n)}$) have an independent set of size exactly $n/2$, and this is a characterization: an $(n,d)$-graph (with $d \geq 1$ and $n$ even) has an independent set of size exactly $n/2$ if and only if it is bipartite. 

It is natural to suppose that if we impose an upper bound (smaller than $n/2$) on the size of the largest independent set admitted by an $(n,d)$-graph, then we will get a reduction in the maximum possible number of independent sets admitted. Indeed, it is tempting (though, as it turns out, incorrect) to conjecture that for $\alpha \leq n/2$ an $(n,d,\alpha)$-graph admits at most $2^{\alpha+o(n)}$ independent sets (the $2^\alpha$ term coming from arbitrary subsets of some maximum independent set). That this conjecture is not true was probably first observed by Daynyak \cite{Daynyak2007}, who showed that for each fixed $d \geq 3$ there is a sequence of $d$-regular graphs $(G_m)_{m \geq 1}$ with $|V(G_m)| \rightarrow \infty$ as $m \rightarrow \infty$, with $G_m$ having largest independent set size at most $|V(G_m)|(1-1/k^2)/2$, and with $G_m$ having at least $\exp_2\left\{|V(G_m)|(1+1/(2k))/2\right\}$ independent sets.

Sapozhenko \cite{Sapozhenko2} considered the question of bounding $i(G)$ for $G$ in the family of $(n,d,\alpha)$-graphs, and showed that there is an absolute constant $c$ such that if $G$ is an $(n,d,\alpha)$-graph, then
\begin{equation} \label{eq-Sap-alpha-ub}
i(G) \leq \left(1+\frac{n}{2\alpha}\right)^\alpha \exp_2\left\{cn\sqrt{\frac{\log d}{d}}\right\}.
\end{equation}
In particular if $\alpha=c_{\rm ind}n+o(n)$ ($c_{\rm ind} >0$ a constant) then we have from \eqref{eq-Sap-alpha-ub} that for an $(n,d,\alpha)$-graph
$$
i(G) \leq \left(1+\frac{1}{2c_{\rm ind}}\right)^{c_{\rm ind}n+o(n)}.
$$

One goal of this paper is to improve Sapozhenko's inequality \eqref{eq-Sap-alpha-ub}, and in fact to obtain a bound that in many cases is tight (up to the $o(n)$ term). Here we present one of our two main theorems.
\begin{theorem} \label{thm-ind-sets-alpha=cn-d<=n/2}
For each $0 < c_{\rm ind} \leq 1/2$ set
$$
k(c_{\rm ind}) =
    \begin{cases} 
      \left(1+\frac{1}{2c_{\rm ind}}\right)^{c_{\rm ind}} & \text{if } 1/(2c_{\rm ind}) \text{ is an integer}  \\
      \left(1+\lf\frac{1}{2c_{\rm ind}}\rf\right)^{c_{\rm ind}\lc\frac{1}{2c_{\rm ind}}\rc-1/2} \left(1+\lc\frac{1}{2c_{\rm ind}}\rc\right)^{1/2-c_{\rm ind}\lf\frac{1}{2c_{\rm ind}}\rf} & \text{otherwise.}
   \end{cases}
$$
\begin{description}
\item[UB1] Let $N \subseteq {\mathbb N}$ be an infinite set, and let $\{(n,d_n,\alpha_n):n \in N\}$ be a sequence of triples with
\begin{description}
\item[P1] $d_n \rightarrow \infty$ as $n \rightarrow \infty$ and
\item[P2] $\alpha_n/n \rightarrow c_{\rm ind}$ as $n \rightarrow \infty$.
\end{description}
If $(G_n)_{n \in N}$ is a sequence of graphs with $G_n$ an $(n, d_n, \alpha_n)$ graph for each $n \in N$, then
$$
i(G_n) \leq k(c_{\rm ind})^{n\left(1+o(1)\right)}
$$
where $o(1)\rightarrow 0$ as $n \rightarrow \infty$.
\item[LB1] There is 
\begin{itemize}
\item an infinite set $N \subseteq {\mathbb N}$, 
\item a sequence of triples $\{(n,d_n,\alpha_n):n \in N\}$ satisfying {\bf P1} and {\bf P2} as well as
\begin{description}
\item[P3] $\limsup_{n \rightarrow \infty} d_n/n \leq 1/2$,
\end{description}
and 
\item a sequence of graphs $(G_n)_{n \in N}$ with $G_n$ an $(n, d_n, \alpha_n)$ graph for each $n \in N$ 
\end{itemize}
such that
$$
i(G_n) \geq k(c_{\rm ind})^{n\left(1+o(1)\right)}
$$
where $o(1)\rightarrow 0$ as $n \rightarrow \infty$.
\end{description}
\end{theorem}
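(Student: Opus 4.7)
The plan for UB1 is to combine a Sapozhenko-style graph container lemma with Zykov's theorem. Apply the container lemma to the $d_n$-regular graph $G_n$ to obtain a family $\mathcal{F}$ of subsets of $V(G_n)$, with $|\mathcal{F}|=2^{O(n\sqrt{(\log d_n)/d_n})}=2^{o(n)}$ (using P1), such that every independent set of $G_n$ lies inside some $C\in\mathcal{F}$ satisfying $|C|\leq n/2+o(n)$; the bound $n/2$ comes from the standard fact that every $d$-regular graph with $d\geq 1$ has $\alpha\leq n/2$, and this is the only global feature the container lemma can exploit at this step. For each container $C$, the induced subgraph $G_n[C]$ is a subgraph of $G_n$ and therefore has independence number at most $\alpha_n$, so Zykov's theorem gives $i(G_n[C])\leq i(Z(|C|,\alpha_n))$. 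It then remains to compute $i(Z(|C|,\alpha_n))$: using $|C|\leq n/2+o(n)$, $\alpha_n/n\to c_{\rm ind}$ (by P2), and the definition of $Z$ as a disjoint union of nearly-equal cliques, the ratio $|C|/\alpha_n$ tends to $1/(2c_{\rm ind})$, and the two cases of $k(c_{\rm ind})$ arise precisely from whether $1/(2c_{\rm ind})$ is an integer (all cliques of $Z$ the same size) or not (cliques split between sizes $\lfloor 1/(2c_{\rm ind})\rfloor$ and $\lceil 1/(2c_{\rm ind})\rceil$ in the required proportion). Multiplying $|\mathcal{F}|$ by this per-container bound yields UB1.

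For LB1, the plan is to exhibit a $d_n$-regular graph whose two ``sides'' each imitate Zykov's extremal graph, joined by a nearly complete bipartite graph. Set $j=\lfloor 1/(2c_{\rm ind})\rfloor$ and pick $p\in(0,1]$ satisfying $p/(2j)+(1-p)/(2(j+1))=c_{\rm ind}$. Partition the vertex set as $A_1\sqcup A_2\sqcup B_1\sqcup B_2$ with $|A_i|=|B_i|$ and $|A_1|=pn/2$; place disjoint copies of $K_j$ inside each of $A_1,B_1$ and disjoint copies of $K_{j+1}$ inside each of $A_2,B_2$; and join $A=A_1\cup A_2$ to $B=B_1\cup B_2$ by a bipartite graph that is complete except for a fixed perfect matching between $A_2$ and $B_2$. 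Taking $d_n=n/2+j-1$ makes $G_n$ $d_n$-regular (vertices of $A_1\cup B_1$ pick up $j-1$ intra-side plus $n/2$ bipartite neighbours, those of $A_2\cup B_2$ pick up $j$ intra-side plus $n/2-1$ bipartite neighbours), with $d_n\to\infty$ and $d_n/n\to 1/2$ (establishing P1 and P3). The independence number is $\alpha_n=|A_1|/j+|A_2|/(j+1)=c_{\rm ind}n$ by the choice of $p$ (establishing P2), and because the bipartite part of $G_n$ misses only a perfect matching, every independent set of $G_n$ is either entirely in $A$, entirely in $B$, or a single matching non-edge, so
\[
i(G_n)\geq 2(j+1)^{|A_1|/j}(j+2)^{|A_2|/(j+1)}-1,
\]
whose $n$-th root converges to $k(c_{\rm ind})$ by the choice of $p$.

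The principal obstacle is on the upper-bound side: the container lemma must be invoked so that container sizes are pinned at $n/2+o(n)$ rather than $\alpha_n+o(n)$, for otherwise the Zykov bound within each container degenerates to $2^{\alpha_n}$, which is far too weak when $c_{\rm ind}<1/2$; moreover, the $o(n)$ slack in both $|C|$ and $\alpha_n$ has to be controlled uniformly near the thresholds where $1/(2c_{\rm ind})$ crosses an integer and the two cases in the definition of $k(c_{\rm ind})$ meet. A secondary, more technical obstacle for LB1 is a divisibility issue: the infinite set $N$ must be chosen so that $pn/2$ is an integer multiple of $j$ and $(1-p)n/2$ is an integer multiple of $j+1$, but any small failure can be absorbed into the allowed $o(n)$ slack on $\alpha_n$.
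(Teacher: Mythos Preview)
Your proposal is correct and, for \textbf{UB1}, follows essentially the same route as the paper: Sapozhenko's container lemma to get $2^{o(n)}$ containers each of size $n/2+o(n)$, then Zykov's theorem inside each container, then an asymptotic analysis of $i(Z(\lfloor n/2\rfloor,\alpha_n))$ to extract $k(c_{\rm ind})$. One minor correction: the container size $n/2+o(n)$ is not a consequence of the trivial bound $\alpha(G)\leq n/2$; it comes directly from the container lemma's output $|D|\leq nd/(2d-\varphi)$ with $\varphi=o(d)$. The $\alpha\leq n/2$ fact plays no role at that step.

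For \textbf{LB1} your construction is genuinely different from the paper's, and in fact cleaner for the purpose at hand. The paper builds two copies of $Z(n/2,\alpha)$ and joins each clique $X_i$ to its partner $Y_i$ by a complete bipartite graph (so that $X_i\cup Y_i$ becomes a clique), then patches up regularity through a somewhat delicate case analysis depending on the residue of $n/2$ modulo $\alpha$; the final witness sequence uses $d_n=n/4$. You instead join the two sides by a complete bipartite graph minus a perfect matching on the large cliques, which makes the graph regular in one stroke with $d_n=n/2+j-1$, still satisfying $d_n/n\to 1/2$. Your approach avoids the case analysis entirely and gives a single uniform construction; the paper's approach, on the other hand, yields $(n,d,\alpha)$-graphs containing $Z(\lfloor n/2\rfloor,\alpha)$ for a whole range of $d$ (roughly $n/\alpha\lesssim d\lesssim n/2$), which is of independent interest beyond LB1. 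Both are valid; yours is more economical for the stated theorem, theirs says more.
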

So if $\alpha$ scales linearly with $n$, the largest number of independent sets admitted by an $(n,d,\alpha)$-graph (with $d \leq n/2$) grows exponentially with $n$ with a precisely determinable base that depends on the scaling factor. Note that while Sapozhenko's bound \eqref{eq-Sap-alpha-ub} gives {\bf UB1} when $c_{\rm ind}$ is of the form $1/(2m)$ for integer $m$, for all other choices of $c_{\rm ind}$ we have $k(c_{\rm ind}) <(1+1/(2c_{\rm ind}))^{c_{\rm ind}}$, so {\bf UB1} strictly improves \eqref{eq-Sap-alpha-ub} in these cases. As a specific instance observe that \eqref{eq-Sap-alpha-ub} gives $i(G) \leq (5/2)^{n/3+o(n)} = (1.357\cdots)^{n+o(n)}$ for $G$ an $(n,d,n/3)$-graph, while ${\bf UB1}$ yields $i(G) \leq 6^{n/6+o(n)} = (1.348\cdots)^{n+o(n)}$.

Note that if $c_{\rm ind}=0$ (that is, if $\alpha_n = o(n)$) then we have an easy upper bound on the count of independent sets in an $(n,d,\alpha)$-graph $G$, namely
$$
i(G) \leq 2^{\alpha_n} \binom{n}{\alpha_n} = 2^{o(n)}
$$
(take arbitrary subsets of sets of size $\alpha_n$), that is best possible in the same sense that the bounds in Theorem \ref{thm-ind-sets-alpha=cn-d<=n/2} are best possible (up to $o(n)$ in the exponent).

\medskip

In {\bf LB1} we (essentially) restrict attention to $(n,d)$-graphs with $d \leq n/2$. There is a good reason for this. Although the bound {\bf UB1} is valid for all $(n,d,\alpha)$-graphs that satisfy {\bf P1} and {\bf P2}, it is only tight if we add condition {\bf P3}. Before exploring this further (see Theorem \ref{thm-ind-sets-alpha=cn-d=an-a>=1/2} below), we pause to make the more fundamental observation that while the (by now very familiar) bound $i(G) \leq 2^{n(1+o(1))/2}$ for $(n,d)$-graphs is valid for all choices of $n$ and $d$, for $d > n/2$ it is not tight. Indeed, we have the following simple observation.
\begin{claim} \label{clm-(n,d):d>n/2}
For $n, d \geq 1$ with $nd$ even and $n/2 < d < n$ we have
\begin{description}
\item[UB2] if $G$ is an $(n,d)$-graph then 
$$
i(G) \leq n2^{n-d} = 2^{n-d + o(n)}
$$
(where $o(1)\rightarrow 0$ as $n, d \rightarrow \infty$)
and
\item[LB2] there is an $(n,d)$-graph $G$ that has an independent set of size $n-d$ and so has
$$
i(G) \geq 2^{n-d}.
$$
\end{description}
\end{claim}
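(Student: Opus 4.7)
The plan is to handle \textbf{UB2} and \textbf{LB2} separately, both by elementary arguments exploiting the regime $d > n/2$.

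For \textbf{UB2}, I would start from the observation that since each vertex $v$ has $d > n/2$ neighbors, its non-neighbors (other than itself) number only $n - 1 - d$. Hence for any non-empty independent set $I$ containing $v$, the set $I \setminus \{v\}$ is contained in the non-neighbors of $v$, so there are at most $2^{n-d-1}$ independent sets through any fixed $v$. Summing over $v$ counts each non-empty independent set $|I| \geq 1$ times, so
$$\#\{I \neq \emptyset : I \text{ independent}\} \leq \sum_{v} 2^{n-d-1} = n \cdot 2^{n-d-1}.$$
Adding $1$ for the empty set and using $n \cdot 2^{n-d-1} \geq 1$ (which holds since $d \leq n-1$) yields $i(G) \leq n \cdot 2^{n-d}$. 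Taking logarithms gives $\log_2 i(G) \leq n - d + \log_2 n = n - d + o(n)$, since $\log_2 n = o(n)$.

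For \textbf{LB2}, the plan is to build $G$ explicitly. I would partition the vertex set into a part $A$ of size $n-d$ and a part $B$ of size $d$, and add every edge between $A$ and $B$. Then each vertex of $A$ has degree $d$ and each vertex of $B$ has degree $n-d$. To reach $d$-regularity, add a $(2d-n)$-regular graph on $B$, so that each vertex of $B$ gains precisely the additional $2d - n$ neighbors it needs. By construction $A$ is independent of size $n-d$, so $i(G) \geq 2^{|A|} = 2^{n-d}$.

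The only point requiring verification is existence of the auxiliary $(2d-n)$-regular graph on $d$ vertices. This requires $d \geq (2d-n) + 1$, i.e.\ $d \leq n-1$, which is assumed, together with an even degree sum $d(2d-n) = 2d^{2} - nd$; the latter follows immediately from the hypothesis that $nd$ is even. There is no substantive obstacle in either part; the real content of the claim is the qualitative message that when $d > n/2$ the growth rate $n - d$ replaces the usual $n/2$, which in turn motivates why condition \textbf{P3} must be imposed in \textbf{LB1}.
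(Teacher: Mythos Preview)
Your proposal is correct and follows essentially the same approach as the paper: for \textbf{UB2} you use the container observation that every non-empty independent set lies in $V(G)\setminus N(v)$ for any $v\in I$, and for \textbf{LB2} you give exactly the paper's construction of an independent set $A$ of size $n-d$ joined completely to a $(2d-n)$-regular graph on $B$, with the same parity check $d(2d-n)$ even. The only cosmetic difference is that you phrase the upper bound via $I\setminus\{v\}$ (yielding the slightly sharper $n\cdot 2^{n-d-1}+1$) whereas the paper counts subsets of $V(G)\setminus N(v)$ directly.
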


We give the (easy) proof here, as a very simple preview of our container argument for upper bounds and constructions for lower bounds.
\begin{proof} (Proof of Claim \ref{clm-(n,d):d>n/2}.)
We begin with {\bf UB2}. Let $I$ be a non-empty independent set in $G$, and let $v \in I$ be any vertex of $I$. Since no neighbour of $v$ is in $I$, we have $I \subseteq V(G)\setminus N(v)$, a set of size $n-d$. Thus by considering arbitrary non-empty subsets of $V(G)\setminus N(v)$, as $v$ runs over all vertices of $G$, we get an upper bound on the number of non-empty independent sets of $G$ of $n(2^{n-d}-1)$, so $i(G) \leq n(2^{n-d}-1) +1 \leq n2^{n-d}$.    

We now move on to {\bf LB2}. Note that $d > 2d-n$ (this is equivalent to $d < n$), and also that because $nd$ is even, we have that $d(2d-n)$ is even. These facts together imply that there is a $(2d-n)$-regular graph on any set of $d$ vertices. 

Construct a graph $G$ on vertex set $\{v_1, \ldots, v_{n-d}\} \cup \{w_1, \ldots, w_d\}$ as follows:
\begin{itemize}
\item $\{v_1, \ldots, v_{n-d}\}$ induces a subgraph with no edges.
\item For each $i=1,\ldots,n-d$ and $j=1,\ldots,d$, $v_i$ is adjacent to $w_j$.
\item The subgraph induced by $\{w_1, \ldots, w_d\}$ is $(2d-n)$ regular. 
\end{itemize}
It is easy to check that $G$ is $d$-regular and has an independent set of size $n-d$ (namely $\{v_1, \ldots, v_{n-d}\}$).
\end{proof}

The conclusion of Claim \ref{clm-(n,d):d>n/2} suggests that in the regime $d > n/2$, the analog of Theorem \ref{thm-ind-sets-alpha=cn-d<=n/2} will look somewhat different. In particular, the growth rate of the maximum number of independent sets in an $(n,d,\alpha)$ graph may (and in fact does) depend on $d$ as well as $\alpha$. In this direction we now present our second main theorem. Note that when $d > n/2$, the size of the largest independent set in an $(n,d)$-graph is at most $n-d$.
\begin{theorem} \label{thm-ind-sets-alpha=cn-d=an-a>=1/2}
For each $1/2 \leq c_{\rm deg} < 1$ and $0 < c_{\rm ind} \leq 1-c_{\rm deg}$ set
$$
k(c_{\rm ind},c_{\rm deg}) = \begin{cases}
    \left(1+\frac{1-c_{\rm deg}}{c_{\rm ind}}\right)^{c_{\rm ind}}  & \text{if } \frac{1-c_{\rm deg}}{c_{\rm ind}} \in {\mathbb N}\\
    \left(1+\lf \frac{1-c_{\rm deg}}{c_{\rm ind}} \rf \right)^{c_{\rm ind}\lc \frac{1-c_{\rm deg}}{c_{\rm ind}} \rc-1+c_{\rm deg}}  \left(1+\lc \frac{1-c_{\rm deg}}{c_{\rm ind}} \rc \right) ^{1-c_{\rm deg}+\lf \frac{1-c_{\rm deg}}{c_{\rm ind}}\rf} & \text{otherwise.}
\end{cases}
$$
\begin{description}
\item[UB3] Let $N \subseteq {\mathbb N}$ be an infinite set, and let $\{(n,d_n,\alpha_n):n \in N\}$ be a sequence of triples with
\begin{description}
\item[Q1] $d_n/n \rightarrow c_{\rm deg}$ as $n \rightarrow \infty$ and
\item[Q2] $\alpha_n/n \rightarrow c_{\rm ind}$ as $n \rightarrow \infty$.
\end{description}
If $(G_n)_{n \in N}$ is a sequence of graphs with $G_n$ an $(n, d_n, \alpha_n)$ graph for each $n \in N$, then
$$
i(G_n) \leq k(c_{\rm ind},c_{\rm deg})^{n\left(1+o(1)\right)}
$$
where $o(1)\rightarrow 0$ as $n, d \rightarrow \infty$.
\item[LB3] There is 
\begin{itemize}
\item an infinite set $N \subseteq {\mathbb N}$, 
\item a sequence of triples $\{(n,d_n,\alpha_n):n \in N\}$ satisfying {\bf Q1} and {\bf Q2}, and 
\item a sequence of graphs $(G_n)_{n \in N}$ with $G_n$ an $(n, d_n, \alpha_n)$ graph for each $n \in N$ 
\end{itemize}
such that
$$
i(G_n) \geq k(c_{\rm ind},c_{\rm deg})^{n\left(1+o(1)\right)}
$$
where $o(1)\rightarrow 0$ as $n, d \rightarrow \infty$.
\end{description}
\end{theorem}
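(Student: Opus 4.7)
Both UB3 and LB3 hinge on the observation that, with $d/n \to c_{\rm deg}$ and $\alpha/n \to c_{\rm ind}$, the quantity $k(c_{\rm ind}, c_{\rm deg})^n$ equals, up to a subexponential factor, the count $i(Z(n-d,\alpha))$, where $Z(m,\alpha)$ is the $m$-vertex graph consisting of $\alpha$ cliques of near-equal size. Indeed, Zykov's theorem applied to $Z(n-d,\alpha)$ gives the product $(1+\lfloor (n-d)/\alpha\rfloor)^{\alpha-r}(1+\lceil (n-d)/\alpha\rceil)^{r}$ with $r=(n-d)-\alpha\lfloor (n-d)/\alpha\rfloor$, and this is exactly the formula in the theorem. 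So the theorem asserts that, in the regime $d>n/2$, the maximum number of independent sets in an $(n,d,\alpha)$-graph matches (to first order in the exponent) the Zykov count on just $n-d$ vertices.

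For UB3, the plan is to apply a graph container argument adapted to the $d>n/2$ regime. For each independent set $I\subseteq V(G)$ we have $|I|\leq n-d$ by $d$-regularity, and a Sapozhenko-style fingerprinting procedure, terminating once the ``core'' has stabilized into a container $C\supseteq I$ of size $n-d+o(n)$, yields a family of containers of total size $2^{o(n)}$. Since $G[C]$ inherits independence number at most $\alpha$, Zykov's theorem bounds $i(G[C])\leq i(Z(|C|,\alpha))$, and summing over containers gives
\begin{equation*}
i(G)\leq 2^{o(n)}\cdot i(Z(n-d+o(n),\alpha)) = k(c_{\rm ind},c_{\rm deg})^{n(1+o(1))}.
\end{equation*}
The main technical obstacle is to carry out the container procedure so it halts at size $n-d+o(n)$ rather than the cruder $n/2+o(n)$ that yields Sapozhenko's bound in this regime; this relies on exploiting that, for $d>n/2$, the complement $V\setminus N(v)$ of any vertex has only $n-d<n/2$ vertices, so a single element of $I$ already pins $I$ into a set strictly smaller than what a naive argument would deliver.

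For LB3, the plan is to construct $G$ explicitly. Partition $V(G)=A\sqcup B$ with $|A|=d$ and $|B|=n-d$, and place $Z(n-d,\alpha)$ on $B$. This already contributes $i(Z(n-d,\alpha)) = k(c_{\rm ind},c_{\rm deg})^{n(1+o(1))}$ independent sets, so it suffices to extend $Z(n-d,\alpha)$ to a $d$-regular graph on all $n$ vertices with independence number still at most $\alpha$. On $A$ place a Zykov-like union of cliques (to ensure $\alpha(G[A])\leq\alpha$ and to contribute appropriate internal degree), and between $A$ and $B$ place a bipartite graph of prescribed degrees fixed by a degree-balancing computation; feasibility along a subsequence of $n$ follows from standard existence results for regular bipartite degree sequences, possibly after small parity adjustments. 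The main subtlety is to verify $\alpha(G)\leq\alpha$: no independent set may combine vertices of $A$ with vertices of $B$ to exceed $\alpha$. This is secured by taking the $A$--$B$ bipartite graph to be an explicit expander, so that for any $S_A\subseteq A$ with $|S_A|\geq 2$ the non-neighborhood $B\setminus N(S_A)$ has size $o(n)$; a singleton $S_A=\{a\}$ has only $n-2d+\deg_A(a)$ non-neighbors in $B$, contributing a mixed IS of size $O(1)$. Consequently every IS of $G$ is either pure in $B$ (hence of size $\leq\alpha$), pure in $A$ (hence of size $\leq\alpha(G[A])\leq\alpha$), or of size $O(1)\ll\alpha$, and the lower bound follows.
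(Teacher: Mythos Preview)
Your outline for \textbf{UB3} is correct but substantially overcomplicated. No Sapozhenko fingerprinting is needed: the observation you bury at the end is the entire argument. For any nonempty independent set $I$ and any $v\in I$, we have $I\subseteq V(G)\setminus N(v)$, a set of size exactly $n-d$. So $\{V(G)\setminus N(v):v\in V(G)\}$ is a family of $n$ containers, each of size $n-d$, and each induces a subgraph with independence number at most $\alpha$. Zykov's theorem then gives $i(G)\le n\cdot i(Z(n-d,\alpha))$, and since $n=2^{o(n)}$ this yields \textbf{UB3} directly. There is no need to iterate, stabilize a core, or invoke any $o(n)$ slack in the container size.

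Your plan for \textbf{LB3} is in the right spirit but differs from the paper's construction and is underspecified at the crucial point. The paper splits $V$ into three parts $X,Y,W$ with $|X|=|Y|=n-d$ and $|W|=2d-n$, places $Z(n-d,\alpha)$ on each of $X$ and $Y$, puts a nearly complete bipartite graph between $X$ and $Y$ (perfect matchings between paired cliques, complete bipartite otherwise), a complete bipartite graph between $Y$ and $W$, and finally an explicit circulant graph $R(d,2d-n,\lceil(n-d)/\alpha\rceil)$ on $X\cup W$; the independence-number bound then reduces to a concrete bound on circulants. In your scheme, once you observe that each $a\in A$ must be adjacent to all but $O(1)$ vertices of $B$ (this is forced by the degree count, since each $b\in B$ needs $d-O(1)$ neighbours in $A$), the expander hypothesis becomes redundant: any mixed independent set automatically has size $O(1)$. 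What is genuinely missing from your sketch is the exact degree-balancing---the clique sizes in $A$ and $B$ differ by one, and you have not explained how to realise a bipartite graph between $A$ and $B$ with the precise (non-uniform) degree sequence required while keeping the non-neighbourhoods small; ``standard existence results'' and ``explicit expander'' do not by themselves deliver this. The paper resolves the analogous issue by an explicit construction rather than an existence argument.
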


As we observed after the statement of Theorem \ref{thm-ind-sets-alpha=cn-d<=n/2}, if $c_{\rm ind}=0$ then $i(G)=2^{o(n)}$ for all valid choices of $c_{\rm deg}$. Note also that $k(c_{\rm ind},1/2)=k(c_{\rm ind})$ for all $0 < c_{\rm ind} \leq 1/2$, so in establishing Theorem \ref{thm-ind-sets-alpha=cn-d=an-a>=1/2} we may assume $1/2 < c_{\rm deg} \leq 1$. Finally, note that if $c_{\rm deg}=1$ then necessarily $c_{\rm ind}=0$ so again we have a best-possible upper bound on $i(G)$ (up to $o(n)$ in the exponent).

\medskip

Kirsch and Radcliffe \cite{KirschRadcliffe} have considered the problem of maximizing the number of cliques (in fact, the number of cliques of each given size) in graphs with bounded maximum degree and with bounded clique number. Taking graph complements, this is the same as bounding the number of independent sets in a graph, in the presence of a lower bound on degrees, and an upper bound on the size of the maximum independent set. One of their main results (\cite[Theorem 3.6]{KirschRadcliffe}) gives asymptotically matching upper and lower bounds on the maximum number of independent sets (of each fixed size) in such graphs. A key difference between our work here, and the work of Kirsch and Radcliffe, is that Kirsch and Radcliffe require the size of the largest independent set to be absolutely bounded in order to obtain asymptotically tight results, whereas here we are interested mostly in the case where the size of the largest independent set grows with the number of vertices.   

\section{Summary of the paper} \label{sec-summary}

The first part ({\bf UB1}) of Theorem \ref{thm-ind-sets-alpha=cn-d<=n/2}, namely an upper bound on the number of independent sets admitted by an $(n,d,\alpha)$-graph with $\alpha \sim c_{\rm ind}n$ ($0 < c_{\rm ind} \leq 1/2$) will be derived from the following more general result.
\begin{theorem} \label{thm-generalizing-Sapozhenko}
There is an absolute constant $c > 0$ such that if $G$ is an $(n,d,\alpha)$-graph with $\alpha \leq n/2$ then
$$
i(G) \leq i(Z(\lf n/2 \rf,\alpha))\exp_2\left\{c\sqrt{(\log d)/d}\right\}.
$$
\end{theorem}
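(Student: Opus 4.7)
The plan is a graph container argument in the tradition of Sapozhenko, with Zykov's theorem (already cited in the introduction) used to control the count inside each container. The central observation is that any $C \subseteq V(G)$ satisfies $\alpha(G[C]) \le \alpha$ automatically, so Zykov gives $i(G[C]) \le i(Z(|C|,\alpha))$; and since $i(Z(\cdot,\alpha))$ is monotone nondecreasing in the first argument, with each additional vertex multiplying the count by at most $2$, a container of size $\lfloor n/2\rfloor + t$ contributes at most $i(Z(\lfloor n/2\rfloor,\alpha))\cdot 2^{t}$ independent sets. So the task reduces to producing, for each independent set $I$, a container $[I]\supseteq I$ of size close to $\lfloor n/2 \rfloor$, with very few distinct containers arising in total.

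For the container construction I would adapt Sapozhenko's iterative closure used in his proof of Granville's conjecture: starting from $I$, at each step add to the closure any vertex whose $G$-neighbourhood is almost entirely contained in the currently-identified $V(G)\setminus[I]$, continuing until the construction stabilizes, and along the way record a fingerprint $F(I)\subseteq V(G)$ just rich enough to specify $[I]$ uniquely. The $d$-regularity is used to ensure the fingerprint is small and that $|[I]|$ does not significantly exceed $\lfloor n/2\rfloor$. This gives the deliverables $I\subseteq[I]$, $|[I]|\le \lfloor n/2\rfloor+t$ with $t$ small, $\alpha(G[[I]])\le\alpha$ inherited from $G$, and $[I]$ uniquely determined by the small fingerprint $F(I)$.

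Combining the pieces, every independent set lies in some closure and hence is counted by $i(G[[I]])\le i(Z(\lfloor n/2\rfloor,\alpha))\cdot 2^{t}$, giving
$$
i(G) \;\le\; \sum_{[I]} i(G[[I]]) \;\le\; (\text{number of fingerprints})\cdot 2^{t}\cdot i(Z(\lfloor n/2\rfloor,\alpha)).
$$
Choosing the closure parameters so that both the fingerprint count and $t$ are controlled by $\sqrt{(\log d)/d}$ then yields the claimed bound.

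The main obstacle is making both the fingerprint count and the excess $t$ of order $\sqrt{(\log d)/d}$, rather than the cruder $n\sqrt{(\log d)/d}$ that a direct reading of Sapozhenko's $(n,d)$-argument would produce. My expectation is that this sharpening comes from running the closure not against $V(G)$ as a whole, but against the Zykov scaffold forced by the $\alpha$-bound: once a coarse near-clique partition of $[I]$ is identified, each clique contributes only $O(1)$ distinctions to the fingerprint, and the aggregation over the $\alpha$ parts — together with the inheritance $\alpha(G[[I]])\le\alpha$ into Zykov — produces the stated error of order $\sqrt{(\log d)/d}$ rather than one linear in $n$.
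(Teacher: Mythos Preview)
Your first three paragraphs are essentially the paper's proof. The paper invokes Sapozhenko's container lemma as a black box: for any integer parameter $0<\varphi<d$ there is a family $\mathcal D$ of at most $\sum_{i\le n/\varphi}\binom{n}{i}$ containers, each of size at most $nd/(2d-\varphi)$, such that every independent set of $G$ lies in some $D\in\mathcal D$. Inside each container, exactly as you say, $\alpha(G[D])\le\alpha$ is inherited, so Zykov gives $i(G[D])\le i(Z(|D|,\alpha))$; monotonicity of $i(Z(\cdot,\alpha))$ (the paper proves a per-vertex factor of at most $3/2$; your factor $2$ is fine) then reduces to $i(Z(\lfloor n/2\rfloor,\alpha))$ times $2^{O(n\varphi/d)}$. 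The container count contributes $2^{O((n\log\varphi)/\varphi)}$, and taking $\varphi=\lfloor\sqrt{d\log d}\rfloor$ balances the two errors at $O\bigl(n\sqrt{(\log d)/d}\bigr)$.

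The ``main obstacle'' in your final paragraph is illusory: you are chasing a typo. The printed statement omits a factor of $n$ in the exponent; the paper's own proof produces
\[
i(G)\le i(Z(\lfloor n/2\rfloor,\alpha))\,\exp_2\!\Bigl\{O\bigl(n\sqrt{(\log d)/d}\bigr)\Bigr\},
\]
and this is all that is needed downstream (for \textbf{UB1} one only requires the error to be $o(n)$ as $d\to\infty$). The stronger statement with error $O\bigl(\sqrt{(\log d)/d}\bigr)$ independent of $n$ is in fact false: take $\alpha=n/2$ and $G$ the disjoint union of $n/(2d)$ copies of $K_{d,d}$, so that
\[
\frac{i(G)}{i(Z(\lfloor n/2\rfloor,\alpha))}=\frac{(2^{d+1}-1)^{n/(2d)}}{2^{\lfloor n/2\rfloor}}\ge 2^{\,n/(2d)-1},
\]
which is unbounded in $n$ for each fixed $d$. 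So your speculative ``Zykov scaffold'' refinement is unnecessary, and the straightforward container-plus-Zykov argument you outlined first is already the complete proof.
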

The proof of Theorem \ref{thm-generalizing-Sapozhenko}, which  is given in Section \ref{subsec-UB1}, is based on Sapozhenko's container argument for \eqref{eq-Sap-alpha-ub}. In that section we also present an analysis of the behavior of $i(Z(\lf n/2 \rf,\alpha))$, from which {\bf UB1} follows.   

The first part ({\bf UB3}) of Theorem \ref{thm-ind-sets-alpha=cn-d=an-a>=1/2}, namely an upper bound on the number of independent sets admitted by an $(n,d,\alpha)$-graph with $d \sim c_{\rm deg}n$ ($1/2 \leq c_{\rm deg} < 1$) and $\alpha \sim c_{\rm ind}n$ ($0 < c_{\rm ind} \leq 1-c_{\rm deg}$) will also be derived from a more general result, this one significantly simpler than Theorem \ref{thm-generalizing-Sapozhenko}, but in the same spirit.
\begin{theorem} \label{thm-easy-container}
If $G$ is an $(n,d,\alpha)$-graph with $d \geq n/2$ and $\alpha \leq n-d$, then
$$
i(G) \leq n i(Z(n-d,\alpha)).
$$
\end{theorem}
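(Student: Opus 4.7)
The plan is to adapt the container-style argument used for \textbf{UB2} in Claim~\ref{clm-(n,d):d>n/2}, sharpening its final step by invoking Zykov's theorem in place of the trivial $2^{n-d}$ bound on the number of subsets of a set of size $n-d$. The point is that the hypothesis $\alpha(G) \le \alpha$ is inherited by every induced subgraph of $G$, and so imposes a nontrivial bound on the number of independent sets living inside each small non-neighborhood.

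First I would fix a vertex $v \in V(G)$ and form the induced subgraph $H_v := G[V(G) \setminus N(v)]$. Because $|N(v)| = d$ and $v \notin N(v)$, this subgraph has exactly $n-d$ vertices and contains $v$. As an induced subgraph of $G$, its independence number is at most $\alpha$, and since $\alpha \le n-d$ by hypothesis, Zykov's theorem \cite{Zykov} applies and gives
\[
i(H_v) \;\le\; i(Z(n-d,\alpha)).
\]

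Next comes the key containment step: any non-empty independent set $I$ of $G$ is entirely contained in $V(G) \setminus N(v)$ whenever $v \in I$ (since no neighbor of $v$ can appear in $I$), and hence is an independent set of $H_v$ for every such $v$. Summing $i(H_v)$ over all $v \in V(G)$ therefore counts the empty set exactly $n$ times and each non-empty independent set of $G$ at least $|I| \ge 1$ times, yielding
\[
\sum_{v \in V(G)} i(H_v) \;\ge\; (i(G) - 1) + n.
\]
Combining this with the Zykov bound on each $i(H_v)$ and rearranging gives
\[
i(G) \;\le\; n\, i(Z(n-d,\alpha)) - (n-1) \;\le\; n\, i(Z(n-d,\alpha)).
\]

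There is no substantive obstacle here; the only thing to keep track of is the standard bookkeeping that the empty set is overcounted $n-1$ times in $\sum_v i(H_v)$, but this extra slack is painlessly absorbed into the clean stated bound once one observes $i(Z(n-d,\alpha)) \ge 1$. The role of the hypothesis $d \ge n/2$ is just to make $V(G)\setminus N(v)$ the ``small side'' of the vertex set, so that passing to $H_v$ is a genuine restriction.
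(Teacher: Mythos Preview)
Your proof is correct and follows essentially the same approach as the paper: both arguments take the non-neighborhoods $V(G)\setminus N(v)$ as a family of $n$ containers of size $n-d$ covering every independent set, and then apply Zykov's theorem to each container. Your treatment is slightly more careful with the bookkeeping on the empty set (yielding the marginally sharper intermediate bound $n\,i(Z(n-d,\alpha))-(n-1)$), but this is the same argument.
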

The proof of Theorem \ref{thm-easy-container} is described in Section \ref{subsec-UB3}. In that section we also outline an analysis of the behavior of $i(Z(n-d,\alpha))$, from which {\bf UB3} follows.  

We now discuss the lower bounds {\bf LB1} and {\bf LB3}. Recall that for $G$ an $(n,d,\alpha)$-graph with $d \lesssim n/2$  and $\alpha \sim c_{\rm ind}n$ $(0 < c_{\rm ind} \leq 1/2)$ we have
$$
i(G) \leq i(Z(\lf n/2 \rf,\alpha))2^{o(n)}.
$$
(as $d \rightarrow \infty$) and that when $d \sim c_{\rm deg}n$ ($1/2 \leq c_{\rm deg} \leq 1$) and $\alpha \sim c_{\rm ind}n$ ($0 < c_{\rm ind} \leq 1-c_{\rm deg}$) we have
$$
i(G) \leq i(Z(n-d,\alpha))2^{o(n)},
$$
In light of these two upper bounds, it would seem that the most straightforward and satisfying approach to {\bf LB1} and {\bf LB3} would be to establish that for every triple $(n,d,\alpha)$ for which there exists an $(n,d,\alpha)$-graph,  there exists an $(n,d,\alpha)$-graph that has $Z(\lf n/2 \rf, \alpha)$ as an induced subgraph (if $d \leq n/2$) or has $Z(n-d, \alpha)$ as an induced subgraph (if $d \geq n/2$), and that has maximum independent set size at most $\alpha$. 

We are not at the moment able to achieve this exact goal, but we can come close, finding a collection of constructions of $(n,d,\alpha)$-graphs that have appropriate Zykov graphs as induced subgraphs. The constructions when $d \leq n/2$ are detailed in Section \ref{sec-constructions-d<=n/2}, while the constructions when $d \geq n/2$ appear in Section \ref{sec-constructions-d>=n/2}.    We conclude with some discussion and open problems in Section \ref{sec-discussion}.

\section{Upper Bounds on independent set counts} \label{sec-UB1,3}

\subsection{Proof of {\bf UB1}} \label{subsec-UB1}

\subsubsection{Proof of Theorem \ref{thm-generalizing-Sapozhenko}}

We begin with the proof of Theorem \ref{thm-generalizing-Sapozhenko}. Our proof follows Sapozhenko's proof of \eqref{eq-Sap-alpha-ub}, replacing at a key point an appeal to a theorem of Alekseev with an appeal to Zykov's generalization of Tur\'an's Theorem.    

The heart of Sapozhenko's proof of \eqref{eq-Sap-alpha-ub} is the following container lemma (\cite{Sapozhenko2}, see also \cite{Galvin} for an exposition).  
\begin{lemma} \label{lem-Sapozhenko}
Let $G$ be an $(n,d)$-graph, and let $0 < \varphi < d$ be an integer. There is a family ${\mathcal D}$ of subsets of the vertex set of $G$, with the following properties:
\begin{enumerate}
\item $|{\mathcal D}| \leq \sum_{i \leq n/\varphi} \binom{n}{i}$.
\item Each $D \in {\mathcal D}$ satisfies
$$
|D| \leq \frac{nd}{2d-\varphi}.
$$
\item For each independent set $I$ in $G$, there is $D \in {\mathcal D}$ with $I \subseteq D$. 
\end{enumerate}
\end{lemma}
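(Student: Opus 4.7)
My plan is to follow the standard graph-container strategy, building for each independent set $I$ of $G$ a small fingerprint $S = S(I) \subseteq I$ that (together with $G$) determines a container $D(S) \supseteq I$ of the required size. The family $\mathcal{D} := \{D(S) : S \subseteq V(G),\ |S| \leq n/\varphi\}$ will then satisfy properties (1), (2), and (3). Fix a linear order on $V(G)$ and initialize $S := \emptyset$, $A := V(G)$; greedily, while there exists some $v \in A$ with $|N(v) \cap A| \geq \varphi$, take $v^*$ to be the minimum such vertex, and either add $v^*$ to $S$ and replace $A$ by $A \setminus N(v^*)$ (keeping $v^*$ in $A$) if $v^* \in I$, or replace $A$ by $A \setminus \{v^*\}$ if $v^* \notin I$. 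Since each step that adds to $S$ removes at least $\varphi$ vertices from $A$, the final fingerprint satisfies $|S| \leq n/\varphi$.

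Next I will argue that the terminal set $A$ depends only on $S$ (not on $I$), via a deterministic replay: scanning $v_1, \ldots, v_n$ in order, do nothing at step $i$ unless $v_i \in A$ and $|N(v_i) \cap A| \geq \varphi$, in which case replace $A$ by $A \setminus N(v_i)$ if $v_i \in S$ and by $A \setminus \{v_i\}$ otherwise. An induction using the fact that the greedy always picks the minimum valid vertex (so the sequence of processed vertices is strictly increasing) shows that the replay processes the same vertices in the same order as the fingerprint procedure, yielding the same terminal $A$. I define $D(S)$ to be this terminal set; since a vertex of $I$ is never removed from $A$ during the fingerprint algorithm (when $v^* \in I$, the deleted set $N(v^*)$ is disjoint from $I$ by independence, while when $v^* \notin I$ only $v^* \notin I$ is deleted), the invariant $I \subseteq A$ is maintained, so $I \subseteq D(S)$.

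The technical heart of the argument, and the step I expect to be the main obstacle, is establishing the size bound $|D(S)| \leq nd/(2d - \varphi)$. At termination every $v \in A$ has $|N(v) \cap A| \leq \varphi - 1$, and so $2e(G[A]) \leq (\varphi - 1)|A|$. Invoking the $d$-regularity identities $2e(G[A]) + e(A, A^c) = d|A|$ and $2e(G[A^c]) + e(A, A^c) = d|A^c|$ and subtracting, I obtain
\[
d\bigl(|A| - |A^c|\bigr) \;=\; 2\bigl(e(G[A]) - e(G[A^c])\bigr) \;\leq\; 2e(G[A]) \;\leq\; (\varphi - 1)|A|,
\]
which rearranges to $(2d - \varphi + 1)|A| \leq nd$, i.e.\ $|A| \leq nd/(2d - \varphi + 1) \leq nd/(2d - \varphi)$. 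This is the only place where the $d$-regularity hypothesis enters essentially, and care is required to verify that the constants come out as stated (in particular that the $+1$ slack is enough to absorb any off-by-one issues in the stopping criterion).

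Finally, since each container $D \in \mathcal{D}$ arises as $D(S)$ for some $S \subseteq V(G)$ with $|S| \leq n/\varphi$, we conclude $|\mathcal{D}| \leq \sum_{i \leq n/\varphi} \binom{n}{i}$, which gives (1) and completes the proof.
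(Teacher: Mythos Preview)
The paper does not actually prove this lemma; it is stated as a cited result (attributed to Sapozhenko, with a pointer to \cite{Galvin} for an exposition) and is used as a black box. Your proposal is a correct reconstruction of the standard Sapozhenko/Kleitman--Winston fingerprint argument: the greedy construction of $S$, the deterministic replay showing $D(S)$ depends only on $S$, the invariant $I\subseteq A$, and the double-counting of edges using $d$-regularity to get $|A|\le nd/(2d-\varphi+1)\le nd/(2d-\varphi)$ are all sound. One small point worth making explicit (though it does not affect correctness) is that the replay terminates in a set $A$ where every vertex has fewer than $\varphi$ neighbours in $A$ for \emph{every} input $S$, not just those arising from an independent set --- this is what justifies defining $\mathcal{D}$ over all small $S$ and still having the size bound hold for each $D(S)$; your monotonicity observation about $A$ handles this.
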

We may think of ${\mathcal D}$ as a set of containers that between them include all the independent sets of $G$; upper bounds on the number and size of the containers combine to give an upper bound on the number of independent sets in $G$. 

Another ingredient in the proof is a result of Zykov \cite{Zykov} (Theorem \ref{thm-Zykov} below, alluded to in the introduction) that generalizes Tur\'an's Theorem. For integers $\alpha$ and $n$ with $1 \leq \alpha \leq n$, the {\it Zykov graph} $Z(n,\alpha)$ is the disjoint union of $\alpha$ cliques, as near equal in size as possible. Note that the graph complement of $Z(n,\alpha)$ is the $n$-vertex Tur\'an graph with $\alpha$ classes --- the complete multipartite graph on $n$ vertices with $\alpha$ partite classes, as near equal in size as possible. If $\alpha$ divides $n$ then $Z(n,\alpha)$ is the disjoint union of $\alpha$ cliques of size $n/\alpha$. Otherwise, $Z(n,\alpha)$ consists of $\alpha\lc n/\alpha\rc -n$ cliques of size $\lf n/ \alpha \rf$ and $n-\alpha\lf n/\alpha\rf$ 
cliques of size $\lc n/ \alpha\rc$. 

For a positive integer $t$ and a graph $G$ denote by $i_t(G)$ the number of independent sets in $G$ of size $t$.

\begin{theorem}[\cite{Zykov}] \label{thm-Zykov}
Let $G$ be a graph on $n$ vertices with maximum independent set size at most $\alpha$. We have
$$
i(G) \leq i(Z(n,\alpha))
$$
and more generally for all $1 \leq t \leq n$ we have
$$
i_t(G) \leq i_t(Z(n,\alpha)).
$$
\end{theorem}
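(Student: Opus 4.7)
The plan is to prove Zykov's theorem by the classical symmetrization argument, which I find cleanest in the complementary clique formulation. Setting $H = \overline{G}$, one has $i_t(G) = k_t(H)$ (independent $t$-sets in $G$ being $t$-cliques in $H$) and $\alpha(G) = \omega(H) \leq \alpha$, and writing $T_\alpha(n) := \overline{Z(n,\alpha)}$ for the Tur\'an graph on $n$ vertices with $\alpha$ as-balanced-as-possible parts, the target becomes $k_t(H) \leq k_t(T_\alpha(n))$.

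The key move (Zykov symmetrization) is the following: given non-adjacent vertices $u, v$ in $H$, form $H^{u \to v}$ by resetting $N(v) := N_H(u)$ and leaving all other adjacencies fixed (note $u,v$ remain non-adjacent since $u \notin N_H(u)$). I would first verify three elementary facts. (i) $\omega(H^{u \to v}) \leq \omega(H)$, since any clique of $H^{u \to v}$ containing $v$ (hence not $u$) maps to a clique of $H$ of the same size by swapping $v \mapsto u$, while cliques avoiding $v$ are untouched. (ii) Partitioning the $t$-cliques of $H$ according to intersection with $\{u,v\}$ into counts $A_t$ (contain $u$, not $v$), $B_t$ (contain $v$, not $u$), $C_t$ (contain neither), a direct count shows $k_t(H^{u \to v}) = 2A_t + C_t$ and $k_t(H^{v \to u}) = 2B_t + C_t$, so at least one of the two moves weakly increases $k_t$. (iii) After the move, $u$ and $v$ are twins (identical neighborhoods and still non-adjacent).

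Next I would show that iterating this step terminates at a complete multipartite $H^*$ on $n$ vertices with at most $\alpha$ parts, with $k_t(H^*) \geq k_t(H)$. The cleanest way is to observe that as long as $H$ is not complete multipartite, the non-adjacency relation is not transitive, so there exist non-adjacent $u,v$ and a third vertex $w$ adjacent to exactly one of them; symmetrizing at $\{u,v\}$ strictly decreases a suitable non-negative integer potential, for instance the number of ordered triples $(x, y, z)$ with $x \not\sim y$ but $z$ adjacent to exactly one of $x, y$. I expect the bookkeeping here --- arguing that twinhood established in earlier rounds is preserved through later rounds, so the potential really does strictly decrease --- to be the main technical obstacle, though it follows from the fact that each operation replaces one vertex's neighborhood by an exact copy of another's, which is compatible with the evolving partition of $V$ into twin classes.

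Finally, on the reduced graph $H^*$ with parts of sizes $n_1, \ldots, n_{\alpha'}$ (where $\alpha' \leq \alpha$), the clique count is the elementary symmetric polynomial $k_t(H^*) = e_t(n_1, \ldots, n_{\alpha'})$. Two single-step inequalities then finish the argument. Splitting one part of size $a+b \geq 2$ into two parts of sizes $a, b$ changes $e_t$ by $+ab \cdot e_{t-2}(\text{other parts}) \geq 0$, so one may assume $\alpha' = \alpha$. Rebalancing two parts of sizes $n_i, n_j$ with $n_i \geq n_j + 2$ to sizes $n_i - 1, n_j + 1$ changes $e_t$ by $+(n_i - n_j - 1)\cdot e_{t-2}(\text{other parts}) \geq 0$, so one may further assume all parts differ in size by at most $1$, i.e., $H^* = T_\alpha(n)$. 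Chaining the inequalities gives $i_t(G) = k_t(H) \leq k_t(H^*) \leq k_t(T_\alpha(n)) = i_t(Z(n,\alpha))$, and summing over $t$ yields the aggregate bound $i(G) \leq i(Z(n,\alpha))$.
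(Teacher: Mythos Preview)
First, note that the paper does not actually prove this theorem; it is quoted with attribution to Zykov and used as a black box, so there is no in-paper argument to compare against. Your plan is the classical Zykov symmetrization, and most of it is sound: the passage to the complement, facts (i)--(iii) about a single symmetrization step, and the elementary-symmetric-polynomial optimization over part sizes in the final step are all correct.

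The genuine gap is in termination. Your proposed potential --- the number of ordered triples $(x,y,z)$ with $x\not\sim y$ and $z$ adjacent to exactly one of them, which is four times the number of induced copies of $K_1\cup K_2$ --- can \emph{increase} under the $k_t$-non-decreasing symmetrization. Take $V=\{u,v,a,b,c,d\}$ with $E(H)=\{ua,ub,cd\}$: here $u\not\sim v$ and $d_2(u)=2>0=d_2(v)$, so the correct move is $H^{u\to v}$, yet the number of induced $K_1\cup K_2$'s goes from $10$ to $12$. Relatedly, your claim that ``twinhood established in earlier rounds is preserved'' fails: if $v$ already has a twin $x\neq u$, the move $H^{u\to v}$ destroys the pair $\{v,x\}$. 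A clean substitute for the potential argument is an extremal choice: among $k_t$-maximizers with $\omega\le\alpha$, fix one, $H$, with the \emph{fewest} edges. If $H$ is not complete multipartite, pick $u\sim w$ with a common non-neighbour $v$. Maximality forces $d_t(u)=d_t(v)=d_t(w)$ (otherwise a single symmetrization strictly raises $k_t$); replacing both $N(u)$ and $N(w)$ by $N(v)$ then changes $k_t$ by exactly $d_t(u,w)\ge 0$, where $d_t(u,w)$ counts $t$-cliques through both $u$ and $w$. If $d_t(u,w)>0$ this contradicts maximality of $k_t$; if $d_t(u,w)=0$ then $H-uw$ is also a $k_t$-maximizer with one fewer edge, contradicting edge-minimality. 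Hence $H$ is complete multipartite, and your final step finishes the proof.
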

Setting $t=2$ and taking graphs complements, Theorem \ref{thm-Zykov} reduces to Tur\'an's Theorem. 

Now let $G$ be an $(n,d,\alpha)$-graph. By Lemma \ref{lem-Sapozhenko} there is a set ${\mathcal D}$ of size at most $\sum_{i \leq n/\varphi} \binom{n}{i}$ such that each independent set in $G$ is contained in some $D \in {\mathcal D}$, and so is an independent set in the subgraph $G[D]$ of $G$ induced by $D$. Since any induced subgraph of $G$ inherits the property from $G$ that its maximum independent set size is at most $\alpha$, we have from Theorem \ref{thm-Zykov} that for any such $D$
\begin{equation} \label{using-Zykov}
i(G[D]) \leq i(Z(|D|,\alpha)) \leq i(Z(\lf nd/(2d-\varphi) \rf, \alpha)).
\end{equation}
The second inequality in \eqref{using-Zykov} above follows from that fact that $|D| \leq \lf nd/(2d-\varphi) \rf$ for all $D \in {\mathcal D}$, and from a simple monotonicity observation regarding $i(Z(\cdot,\alpha))$. In \eqref{using-Zykov} we use only the first inequality from Lemma \ref{lem-add-one-in-Zykov} below; the second inequality will be useful later.
\begin{lemma} \label{lem-add-one-in-Zykov}
For any $n$ and $\alpha$ with $1 \leq \alpha \leq n$, and any positive integer $k$, 
$$
i(Z(n,\alpha)) \leq i(Z(n+k,\alpha))\leq (3/2)^k i(Z(n,\alpha)).
$$
\end{lemma}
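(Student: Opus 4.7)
The plan is to exploit the product structure of $i(Z(n,\alpha))$ and track exactly how it changes when a single vertex is added. Recall that since an independent set in a clique is either empty or a single vertex, a clique on $s$ vertices admits $s+1$ independent sets, and hence
$$
i(Z(n,\alpha)) = \prod_{j=1}^{\alpha}(s_j+1),
$$
where $s_1,\dots,s_\alpha$ are the clique sizes in $Z(n,\alpha)$.

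First I would reduce to the case $k=1$ and then iterate. Write $n = \alpha q + r$ with $0 \le r < \alpha$ and $q = \lf n/\alpha \rf$. Then $Z(n,\alpha)$ consists of $r$ cliques of size $q+1$ and $\alpha-r$ cliques of size $q$. Going from $Z(n,\alpha)$ to $Z(n+1,\alpha)$ turns exactly one clique of size $q$ into a clique of size $q+1$ (if $r<\alpha-1$ this is immediate; if $r=\alpha-1$, then $n+1=\alpha(q+1)$ and all $\alpha$ cliques have the common size $q+1$, which amounts to the same thing). Consequently
$$
\frac{i(Z(n+1,\alpha))}{i(Z(n,\alpha))} = \frac{q+2}{q+1}.
$$

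This ratio is at least $1$, giving the left-hand inequality immediately. For the right-hand inequality, note that the hypothesis $\alpha \leq n$ guarantees $q \geq 1$, so
$$
\frac{q+2}{q+1} \leq \frac{3}{2},
$$
with equality exactly when $q=1$. Iterating the relation $k$ times and bounding each factor by $3/2$ yields
$$
i(Z(n+k,\alpha)) \leq (3/2)^k \, i(Z(n,\alpha)),
$$
as desired.

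There is no real obstacle here: the whole argument is a two-line computation once we observe that incrementing $n$ by $1$ adds a single vertex to the smallest clique. The only mild subtlety is confirming that $q\ge 1$ (which uses $\alpha\le n$) so that $(q+2)/(q+1)\le 3/2$; for $q=0$ the ratio would be $2$ and the claimed bound would fail.
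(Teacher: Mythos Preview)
Your proof is correct and follows essentially the same approach as the paper: both compute the one-step ratio $i(Z(n+1,\alpha))/i(Z(n,\alpha)) = (q+2)/(q+1)$ with $q=\lfloor n/\alpha\rfloor$, observe that this lies in $[1,3/2]$ since $q\ge 1$, and then iterate. The only cosmetic difference is that you explicitly note the product formula $i(Z(n,\alpha))=\prod_j (s_j+1)$ and handle the boundary case $r=\alpha-1$ separately, whereas the paper phrases the same step more tersely.
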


\begin{proof}
The structural difference between $Z(n+1,\alpha)$ and $Z(n,\alpha)$ is that one of the smaller cliques (one of the cliques of size $\lf n/\alpha \rf$) in $Z(n,\alpha)$ becomes a clique of size $\lf n/\alpha \rf+1$ in $Z(n+1,\alpha)$. This implies that
\begin{equation} \label{eq-Z-change}
i(Z(n+1,\alpha))=\frac{(2+\lf n/\alpha \rf)}{(1+\lf n/\alpha \rf)}i(Z(n,\alpha)).
\end{equation}

The right-hand inequality of the lemma now follows by induction from \eqref{eq-Z-change} and from the fact that $(2+x)/(1+x) \leq 3/2$ for $x \geq 1$. The left-hand inequality of the lemma follows by induction from \eqref{eq-Z-change} and from the fact that $(2+x)/(1+x) \geq 1$ for $x > -1$. 
\end{proof}

We have obtained
\begin{equation} \label{inq-Sap-Zyz-ub}
i(G) \leq \left(\sum_{i \leq n/\varphi} \binom{n}{i}\right) i(Z(\lf nd/(2d-\varphi) \rf,\alpha)).
\end{equation}

In the sequel we will select a value for $\varphi$ that is both $\omega(1)$ and $o(d)$ as $d \rightarrow \infty$. With this choice of $\varphi$ we have
\begin{eqnarray}
\sum_{i \leq n/\varphi} \binom{n}{i} & \leq & \sum_{i \leq n/\varphi} \frac{n^i}{i!} \nonumber \\
& = & \sum_{i \leq n/\varphi} \frac{(n/\varphi)^i}{i!} \varphi^i  \nonumber  \\
& \leq & e^{n/\varphi} \varphi^{n/\varphi}  \nonumber  \\
& \leq & 2^{O\left(\frac{n\log \varphi}{\varphi}\right)} \label{est1}
\end{eqnarray}
and
\begin{eqnarray}
\lf \frac{nd}{2d-\varphi} \rf & \leq & \frac{n}{2} + O\left(\frac{n\varphi}{d}\right). \label{est2}
\end{eqnarray}
From \eqref{est2} and the left-hand inequality in Lemma \ref{lem-add-one-in-Zykov} we have
\begin{equation} \label{est3}
i(Z(\lf nd/(2d-\varphi) \rf,\alpha)) \leq i(Z(\lf n/2 + O(n\varphi/d) \rf,\alpha)).
\end{equation}
Applying the right-hand inequality of Lemma \ref{lem-add-one-in-Zykov} to \eqref{est3} we obtain
\begin{equation} \label{est4}
i(Z(\lf nd/(2d-\varphi) \rf,\alpha)) \leq i(Z(\lf n/2 \rf,\alpha))2^{O\left(\frac{n\varphi}{d}\right)}.
\end{equation}
Inserting \eqref{est1} and \eqref{est4} into \eqref{inq-Sap-Zyz-ub} we get
$$
i(G) \leq i(Z(\lf n/2 \rf,\alpha))2^{O\left(\frac{n\log \varphi}{\varphi} + \frac{n\varphi}{d}\right)}.
$$
Setting $\varphi = \lf \sqrt{d \log d} \rf$ we obtain Theorem \ref{thm-generalizing-Sapozhenko}.

\subsubsection{Deriving {\bf UB1} from Theorem \ref{thm-generalizing-Sapozhenko}} \label{limit-argument}

We now study the behavior of $i(Z(\lf n/2 \rf, \alpha))$, in order to derive {\bf UB1} from Theorem \ref{thm-generalizing-Sapozhenko}. Our goal is to establish the following lemma.
\begin{lemma} \label{lem-z-asy}
$$
i(Z(\lf n/2 \rf, \alpha_n)) = k(c_{\rm ind})^{n+o(n)}
$$
when $\alpha_n = c_{\rm ind}n + o(n)$ as $n \rightarrow \infty$, where recall
$$
k(c_{\rm ind}) =
    \begin{cases} 
      \left(1+\frac{1}{2c_{\rm ind}}\right)^{c_{\rm ind}} & \text{if } 1/(2c_{\rm ind}) \text{ is an integer}  \\
    \left(1+\lf\frac{1}{2c_{\rm ind}}\rf\right)^{c_{\rm ind}\lc\frac{1}{2c_{\rm ind}}\rc-1/2} \left(1+\lc\frac{1}{2c_{\rm ind}}\rc\right)^{1/2-c_{\rm ind}\lf\frac{1}{2c_{\rm ind}}\rf} & \text{otherwise.}
   \end{cases}
$$
\end{lemma}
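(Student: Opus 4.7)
The plan is to exploit the fact that $Z(m,\alpha)$ is a disjoint union of cliques, so $i(Z(m,\alpha))$ has a closed-form expression, and then read off the asymptotics as $n\to\infty$ by carefully analyzing how $\lfloor m/\alpha \rfloor$ behaves when $m = \lfloor n/2\rfloor$ and $\alpha = \alpha_n$.

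\medskip

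\noindent\textbf{Step 1 (exact formula).} Since an independent set in a disjoint union of cliques picks at most one vertex from each clique, the structure of $Z(m,\alpha)$ described in the introduction gives
\[
i(Z(m,\alpha)) \;=\; \bigl(1+\lfloor m/\alpha\rfloor\bigr)^{\alpha\lceil m/\alpha\rceil - m}\bigl(1+\lceil m/\alpha\rceil\bigr)^{m - \alpha\lfloor m/\alpha\rfloor}.
\]
Setting $m_n := \lfloor n/2 \rfloor$, I will apply this with $\alpha = \alpha_n$ and track the two exponents. Note that $m_n/\alpha_n \to 1/(2c_{\rm ind})$ as $n \to \infty$, so everything depends on whether this limit is an integer.

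\medskip

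\noindent\textbf{Step 2 (non-integer case).} When $1/(2c_{\rm ind}) \notin {\mathbb N}$, the limit lies strictly between two consecutive integers, so for all sufficiently large $n$
\[
\lfloor m_n/\alpha_n \rfloor = \lfloor 1/(2c_{\rm ind}) \rfloor \quad \text{and} \quad \lceil m_n/\alpha_n \rceil = \lceil 1/(2c_{\rm ind}) \rceil.
\]
Substituting $m_n = n/2 + O(1)$ and $\alpha_n = c_{\rm ind}n + o(n)$ into the two exponents of Step 1 gives
\[
\alpha_n\lceil m_n/\alpha_n\rceil - m_n = n\bigl(c_{\rm ind}\lceil 1/(2c_{\rm ind})\rceil - 1/2\bigr) + o(n),
\]
\[
m_n - \alpha_n\lfloor m_n/\alpha_n\rfloor = n\bigl(1/2 - c_{\rm ind}\lfloor 1/(2c_{\rm ind})\rfloor\bigr) + o(n),
\]
and plugging these into the product immediately recognises $k(c_{\rm ind})^{n+o(n)}$.

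\medskip

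\noindent\textbf{Step 3 (integer case).} When $q_0 := 1/(2c_{\rm ind}) \in {\mathbb N}$, the sequence $m_n/\alpha_n$ converges to an integer and may approach it from either side, so $\lfloor m_n/\alpha_n \rfloor \in \{q_0-1,q_0\}$ with no control in general. The key observation is that in either sub-case, $m_n - q_0\alpha_n = o(n)$, hence one of the two exponents in the formula of Step 1 is $o(n)$ and the other is $\alpha_n + o(n) = c_{\rm ind}n + o(n)$. Since the bases are bounded constants, the $o(n)$ exponent contributes only a factor $2^{o(n)}$, and we obtain
\[
i(Z(m_n,\alpha_n)) = (1+q_0)^{c_{\rm ind}n + o(n)} = \bigl(1+1/(2c_{\rm ind})\bigr)^{c_{\rm ind}n + o(n)} = k(c_{\rm ind})^{n+o(n)}.
\]

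\medskip

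The only real subtlety is the integer case, where one has to check that the possible oscillation of $\lfloor m_n/\alpha_n \rfloor$ between $q_0-1$ and $q_0$ does not change the leading-order behavior; this is handled once one observes that the smaller of the two exponents is always $o(n)$ and therefore harmless. Everything else is a direct computation with the closed-form expression for $i(Z(\cdot,\cdot))$.
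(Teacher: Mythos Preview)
Your proof is correct and follows essentially the same route as the paper's: both write down the exact product formula for $i(Z(m,\alpha))$, observe $m_n/\alpha_n \to 1/(2c_{\rm ind})$, and then split according to whether this limit is an integer, with the non-integer case handled by stabilisation of the floor and ceiling and the integer case handled by noting that one of the two exponents is $o(n)$.

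One small caveat: the single formula you write in Step~1 is only valid when $\alpha \nmid m$; when $\alpha \mid m$ both of your exponents vanish and the formula degenerates to $1$ rather than the correct $(1+m/\alpha)^\alpha$. The paper treats this as a separate (trivial) subcase in its integer-case analysis. Your Step~3 implicitly assumes $\lceil m_n/\alpha_n\rceil = \lfloor m_n/\alpha_n\rfloor + 1$, which fails exactly when $\alpha_n \mid m_n$; in that event ``one exponent is $o(n)$ and the other is $\alpha_n+o(n)$'' is not literally true. This is easily patched by handling $\alpha_n \mid m_n$ separately (where $i(Z(m_n,\alpha_n)) = (1+q_0)^{\alpha_n}$ directly), and does not affect the substance of your argument.
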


\begin{proof}
Recall that if $\alpha$ divides $N$ then $Z(N,\alpha)$ is the disjoint union of $\alpha$ cliques of size $N/\alpha$. Otherwise, $Z(N,\alpha)$ consists of $\alpha\lc N/\alpha\rc -N$ cliques of size $\lf N/ \alpha \rf$ and $N-\alpha\lf N/\alpha\rf$ 
cliques of size $\lc N/ \alpha\rc$. It follows that for arbitrary $n$ and $\alpha$ we have
$$
i(Z(\lf n/2 \rf, \alpha)) = 
\begin{cases} 
      \left(1+ \frac{\lf n/2 \rf}{\alpha}\right)^\alpha & \text{if } \alpha \mid \lf n/2 \rf\\
      \left(1+ \lf \frac{\lf n/2 \rf}{\alpha} \rf\right)^{\alpha\lc \frac{\lf n/2 \rf}{\alpha} \rc -\lf \frac{n}{2} \rf} \left(1+ \lc \frac{\lf n/2 \rf}{\alpha} \rc\right)^{\lf \frac{n}{2} \rf - \alpha\lf \frac{\lf n/2 \rf}{\alpha}\rf} & \text{otherwise.}
   \end{cases}
$$

A useful observation in all of what follows is that $\alpha_n = c_{\rm ind}n+o(n)$ (together with $\lf n/2 \rf =n/2 + o(1)$) implies that
\begin{equation} \label{dealing-with-int-part}
\frac{\lf n/2 \rf}{\alpha_n} =\frac{1}{2c_{\rm ind}} + o(1).
\end{equation}

We consider first the case when $1/(2c_{\rm ind})$ is an integer. By \eqref{dealing-with-int-part} we may assume that $n$ is large enough that 
$$
\frac{1}{2c_{\rm ind}} - \frac{1}{3} \leq \frac{\lf n/2 \rf}{\alpha_n} \leq \frac{1}{2c_{\rm ind}} + \frac{1}{3} 
$$
($1/3$ is quite arbitrary here --- anything strictly less than $1$ will do).

We treat three subcases --- first $\lf n/2 \rf/\alpha_n=1/(2c_{\rm ind})$, then $\lf n/2 \rf/\alpha_n > 1/(2c_{\rm ind})$, and finally $\lf n/2 \rf/\alpha_n < 1/(2c_{\rm ind})$.

\begin{itemize}
\item If $\lf n/2 \rf/\alpha_n=1/(2c_{\rm ind})$ (so $\alpha_n$ divides $\lf n/2 \rf$) we have immediately
$$
i(Z(\lf n/2 \rf, \alpha_n)) = \left(1+\frac{1}{2c_{\rm ind}}\right)^{c_{\rm ind}n+o(n)} = k(c_{\rm ind})^{n+o(n)}.
$$
\item
If $\lf n/2 \rf/\alpha_n > 1/(2c_{\rm ind})$ then we have
$$
\lf \frac{\lf n/2 \rf}{\alpha_n} \rf = \frac{1}{2c_{\rm ind}}~~~\mbox{and}~~~ \lc \frac{\lf n/2 \rf}{\alpha_n} \rc = 1+\frac{1}{2c_{\rm ind}}.
$$
This, together with $\lf n/2 \rf = n/2+o(1)$ says that
\begin{eqnarray}
i(Z(\lf n/2 \rf, \alpha_n)) & = & \left(1+\frac{1}{2c_{\rm ind}}\right)^{(c_{\rm ind}n + o(n))(1/(2c_{\rm ind})+1)-(n/2)+o(1)} \nonumber\\
& & ~~~~~~~~~~~~~~~~~~\times \left(2+\frac{1}{2c_{\rm ind}}\right)^{n/2+o(1)-(c_{\rm ind}n+o(n))/(2c_{\rm ind})} \label{col1} \\
& = & \left(1+\frac{1}{2c_{\rm ind}}\right)^{c_{\rm ind}n+o(n)} \nonumber \\
& = & k(c_{\rm ind})^{n+o(n)}, \nonumber
\end{eqnarray}
the main point being that the second term on the right-hand side of \eqref{col1} collapses to $2^{o(n)}$. 
\item
If $\lf n/2 \rf/\alpha_n < 1/(2c_{\rm ind})$ then we have
$$
\lf \frac{\lf n/2 \rf}{\alpha_n} \rf = \frac{1}{2c_{\rm ind}}-1~\mbox{and}~ \lc \frac{\lf n/2 \rf}{\alpha_n} \rc = \frac{1}{2c_{\rm ind}},
$$
and again we get $i(Z(\lf n/2 \rf, \alpha_n))=k(c_{\rm ind})^{n+o(n)}$ (this time because the first term in the analog of \eqref{col1} collapses).
\end{itemize}
So in this case ($1/(2c_{\rm ind})$ an integer) we always have $i(Z(\lf n/2 \rf, \alpha_n))=k(c_{\rm ind})^{n+o(n)}$, as claimed.   

Next we consider the case where $1/(2c_{\rm ind})$ is not an integer. By \eqref{dealing-with-int-part} we may assume that $n$ is large enough that $\lf n/2 \rf/\alpha_n$ and $1/(2c_{\rm ind})$ share the same floor and the same ceiling, and that moreover $\alpha_n$ does not divide $\lf n/2 \rf$. We have immediately that
\begin{eqnarray*}
i(Z(\lf n/2 \rf, \alpha_n)) & = & \left(1+\lf \frac{1}{2c_{\rm ind}} \rf\right)^{(c_{\rm ind}n+o(n))\lc \frac{1}{2c_{\rm ind}} \rc - n/2 + o(1)} \\
& & ~~~~~~~~~~~~~~~~~~\times \left(1+\lc \frac{1}{2c_{\rm ind}} \rc\right)^{n/2+o(1)-(c_{\rm ind}n+o(n))\lf \frac{1}{2c_{\rm ind}} \rf} \\
& = & k(c_{\rm ind})^{n+o(n)}.
\end{eqnarray*}
\end{proof}
This completes the derivation of {\bf UB1} from Theorem \ref{thm-generalizing-Sapozhenko}

\subsection{Proof of {\bf UB3}} \label{subsec-UB3}

\subsubsection{Proof of Theorem \ref{thm-easy-container}}

As with Theorem \ref{thm-generalizing-Sapozhenko} this involves a container argument, although a far simpler one than used previously.

Let $G$ be an $(n,d,\alpha)$-graph with $d \geq n/2$ and $1 \leq \alpha \leq n-d$, let $I$ be an independent set in $G$, and let $v$ be a vertex in $I$. As $N(v)$ is disjoint from $I$ it follows that $I$ is a subset of $V(G)\setminus N(v)$, a set of size $n-d$. So $\{V(G)\setminus N(v): v \in V(G)\}$ forms a set of $n$ containers that between them include all independent sets of $G$ as subsets. Noting that any subgraph of $G$ inherits from $G$ that its maximum independent set size is at most $\alpha$, we may apply Theorem \ref{thm-Zykov} to conclude that each container admits at most $i(Z(n-d,\alpha))$ indepenedent sets, and hence $i(G)\leq n i(Z(n-d,\alpha))$. 

\subsubsection{Deriving {\bf UB3} from Theorem \ref{thm-easy-container}}

We now study the behavior of $Z(n-d,\alpha)$, in order to derive {\bf UB3} from Theorem \ref{thm-easy-container}. Noting that for any constant $k(c_{\rm ind},c_{\rm deg})$ we have $n=k(c_{\rm ind},c_{\rm deg})^{o(n)}$, our goal is to show that
\begin{equation} \label{to-show-zykov-large-d}
i(Z(n-d_n,\alpha_n)) = k(c_{\rm ind},c_{\rm deg})^{n+o(n)}
\end{equation}
when $d_n=c_{\rm deg}n+o(n)$ ($1/2 \leq c_{\rm deg} \leq 1$) and $\alpha_n = c_{\rm ind}n+o(n)$ ($0 < c_{\rm ind} \leq 1-c_{\rm deg}$), where recall
$$
k(c_{\rm ind},c_{\rm deg}) = \begin{cases}
    (1+\frac{1-c_{\rm deg}}{c_{\rm ind}})^{c_{\rm ind}}  & \frac{1-c_{\rm deg}}{c_{\rm ind}} \in {\mathbb N}\\
    \left(1+\lf \frac{1-c_{\rm deg}}{c_{\rm ind}} \rf \right)^{c_{\rm ind}\lc \frac{1-c_{\rm deg}}{c_{\rm ind}} \rc-1+c_{\rm deg}}  \left(1+\lc \frac{1-c_{\rm deg}}{c_{\rm ind}} \rc \right) ^{1-c_{\rm deg}-\lf \frac{1-c_{\rm deg}}{c_{\rm ind}}\rf} & \text{otherwise.}
\end{cases}
$$

As we have previously observed, $Z(n-d,\alpha)$ consists of
\begin{itemize}
\item the disjoint union of $\alpha$ cliques of size $(n-d)/\alpha$, if $\alpha$ divides $n-d$, and
\item $\alpha \lc (n-d)/\alpha \rc -n+d$ cliques of size $\lf (n-d)/\alpha \rf$ and $n-d-\alpha \lf (n-d)/\alpha \rf$ cliques of size $\lc (n-d)/\alpha \rc$, otherwise.
\end{itemize}. It follows that
$$
i(Z(n-d,\alpha))=
\begin{cases} 
      \left(1+\frac{n-d}{\alpha} \right)^\alpha & \text{if } \alpha \text{ divides } n-d\\
      \left(1+\lf \frac{n-d}{\alpha} \rf \right)^{\alpha\lc \frac{n-d}{\alpha} \rc -n+d}\left(1+\lc \frac{n-d}{\alpha} \rc \right)^{n-d-\alpha\lf \frac{n-d}{\alpha} \rf} & \text{otherwise.}
   \end{cases}
$$

Noting that $(n-d_n)/\alpha_n=(1-c_{\rm deg})/c_{\rm ind}+o(1)$, we can use an argument almost identical to the one used in Section \ref{limit-argument} (derivation of {\bf UB1}) to show that indeed \eqref{to-show-zykov-large-d} holds here.

\section{Constructing regular graphs with Zykov graphs as subgraphs --- the case $d \leq n/2$} \label{sec-constructions-d<=n/2}

In this section and in Section \ref{sec-constructions-d>=n/2} we establish {\bf LB1} and {\bf LB3}, and also show that the upper bounds {\bf UB1} and {\bf UB3} are essentially tight when $\alpha$ (and, if necessary, $d$) scale linearly with $n$. As discussed in Section \ref{sec-summary}, ideally here we would establish that for every triple $(n,d,\alpha)$ for which there exists an $(n,d,\alpha)$-graph, there exists one that has $Z(\lf n/2 \rf, \alpha)$ as an induced subgraph (if $d \leq n/2$) or has $Z(n-d, \alpha)$ as an induced subgraph (if $d \geq n/2$), and has maximum independent set size at most $\alpha$. We are not quite able to achieve that goal, but we will come close enough to be able to easily establish {\bf LB1} and {\bf LB3}. 

This section (Section \ref{sec-constructions-d<=n/2}) is focused on the case $n \leq d/2$ ({\bf LB1}), while Section \ref{sec-constructions-d>=n/2} is devoted to the case $d \geq n/2$ ({\bf LB3}). We begin in Section \ref{subsec-constructions-d<=n/2-n-even} by considering the subcase of even $n$. In Section \ref{subsec-constructions-d<=n/2-n-odd} we describe the necessary modifications to deal with odd $n$. In Section \ref{subsec-proof-of-LB1} we use the constructions of Sections \ref{subsec-constructions-d<=n/2-n-even} and \ref{subsec-constructions-d<=n/2-n-odd} to establish {\bf LB1}. 

\subsection{Constructions for $n$ even} \label{subsec-constructions-d<=n/2-n-even}

We start all our constructions with two disjoint copies of $Z(n/2,\alpha)$, and we will not add any further edges inside either copy; this ensures that the final graphs have $Z(n/2,\alpha)$ as an induced subgraph. We will then pair up the cliques in the two copies of $Z(n/2,d)$, and add complete bipartite graphs between the two cliques in each pair; this ensures that the final graphs will have independence number at most $\alpha$. Since this base graph is common to all our constructions, we establish some conventions and define it precisely in Section \ref{subsec-base-graph-1}.

If necessary, we then add a few more edges to $G_1(n,\alpha)$ to make sure that the graph thus far constructed is regular.

Next we identify a high-degree regular bipartite subgraph among the edges between the two copies of $Z(n/2,\alpha)$, using only edges that have not already been used in $G_1(n,\alpha)$, or used to make the graph regular. We repeatedly add perfect matchings from this graph of potential edges to complete the construction of an appropriate $(n,d,\alpha)$-graph $G(n,d,\alpha)$. 

We will have to consider various cases (Sections \ref{subsec-construction-d<=n/s-n-even-1}, \ref{subsec-construction-d<=n/s-n-even-2}, \ref{subsec-construction-d<=n/s-n-even-3}, \ref{subsec-construction-d<=n/s-n-even-4} and \ref{subsec-construction-d<=n/s-n-even-5}) depending on the precise relationship between $n$ and $\alpha$.     

\subsubsection{The base graph $G_1(n,\alpha)$} \label{subsec-base-graph-1}

First, note that $Z(n/2, \alpha)$ is the disjoint union of $n_a$ cliques of order $a$ and $n_b$ cliques of order $b$, with $|a-b| = 1$ unless $\alpha$ divides $n/2$, in which case $a=b$ and it is convenient to take $n_a=\alpha$ and $n_b=0$. Otherwise we assume (without loss of generality) that $n_a \geq n_b$. We construct a graph $G_1(n,\alpha)$ on vertex set $X\cup Y$ where $X$ and $Y$ are disjoint sets each of size $n/2$ as follows. On $X$ we place a copy of $Z(n/2,\alpha)$ with cliques $X_1,\ldots,X_{n_a}$ of order $a$ and $X_{n_a+1},\ldots,X_\alpha$ of order $b$. On $Y$ we place a copy of $Z(n/2,\alpha)$ with cliques $Y_1,\ldots,Y_{n_b}$ of order $b$ and $Y_{n_b+1},\ldots,Y_\alpha$ of order $a$. Then for each $i=1, \ldots, \alpha$ we add a complete bipartite graph with partition classes $X_i$ and $Y_i$. Observe that $G_1(n,\alpha)$, being the union of $\alpha$ cliques, has largest independent set size $\alpha$. Since all our constructions will add edges onto the base graph $G_1(n,\alpha)$, and then only between the two copies of $Z(n/2,\alpha)$, this guarantees both that our constructions will have independence number $\alpha$ and that they will have $Z(n/2,\alpha)$ as an induced subgraph.

\subsubsection{When $\alpha$ divides $n/2$} \label{subsec-construction-d<=n/s-n-even-1}

\begin{theorem} \label{thm-alpha-divides-n/2}
Fix $n, d$ and $\alpha$ with $n \geq 2$ and even, $1 \leq \alpha \leq n/2$, $n/2$ divisible by $\alpha$, and $(n/\alpha)-1 \leq d \leq n/2+n/(2\alpha)-1$. There is an $(n,d,\alpha)$-graph $G(n,d,\alpha)$ that has $Z(n/2,\alpha)$ as an induced subgraph.  
\end{theorem}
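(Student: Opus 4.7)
The plan is to start from the base graph $G_1(n,\alpha)$ described in Section~\ref{subsec-base-graph-1}, check that it is already regular of a degree equal to the lower bound on $d$, and then "top up" its regularity by $d-(n/\alpha-1)$ by adding perfect matchings extracted from the remaining available bipartite edges. Set $s=n/(2\alpha)$, so that $Z(n/2,\alpha)$ is a disjoint union of $\alpha$ cliques of size $s$. Then every vertex $x \in X_i$ has $s-1$ neighbors inside the clique $X_i$ and exactly $s$ neighbors in $Y_i$ via the complete bipartite join, giving total degree $2s-1=n/\alpha-1$. Thus $G_1(n,\alpha)$ is itself an $(n,n/\alpha-1,\alpha)$-graph containing $Z(n/2,\alpha)$ as an induced subgraph, which handles the boundary case $d=n/\alpha-1$.

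For the general case, let $H$ be the bipartite graph on vertex classes $X$ and $Y$ whose edges are exactly the cross-edges \emph{not} used in $G_1(n,\alpha)$, i.e.\ the edges from $X_i$ to $Y_j$ for $i \neq j$. A vertex $x \in X_i$ has $|Y \setminus Y_i| = n/2 - s$ neighbors in $H$, and symmetrically for $y \in Y_j$; so $H$ is $(n/2-s)$-regular bipartite. By K\"onig's edge-coloring theorem, $H$ decomposes into $n/2-s$ edge-disjoint perfect matchings $M_1,\dots,M_{n/2-s}$. Given $d$ in the allowed range, set $k := d - (n/\alpha - 1)$; then $0 \leq k \leq n/2 - s$, and we define
$$
G(n,d,\alpha) := G_1(n,\alpha) \cup M_1 \cup \dots \cup M_k.
$$

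It remains to verify the three properties. Regularity is immediate: each vertex gains exactly $k$ new edges from $M_1,\dots,M_k$, so its total degree becomes $(n/\alpha-1)+k=d$. The induced-subgraph property is preserved because every added edge has one endpoint in $X$ and one in $Y$, so $G(n,d,\alpha)[X]$ and $G(n,d,\alpha)[Y]$ are still copies of $Z(n/2,\alpha)$. Finally, $V(G) = \bigsqcup_{i=1}^{\alpha}(X_i \cup Y_i)$ is a partition into $\alpha$ cliques of $G_1(n,\alpha)$ (and hence of $G(n,d,\alpha) \supseteq G_1(n,\alpha)$), so any independent set meets each $X_i\cup Y_i$ in at most one vertex; this forces the independence number to be at most $\alpha$, as required.

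There is essentially no obstacle here beyond bookkeeping: the arithmetic bounds on $d$ are precisely what make the K\"onig decomposition available, and since all surgery happens between $X$ and $Y$, both the Zykov induced-subgraph structure and the $\alpha$-clique cover are preserved automatically. The genuine difficulties — parity issues forcing the addition of an extra edge to restore regularity, and handling odd $n$ or $\alpha \nmid n/2$ — are deferred to the later subcases in Sections~\ref{subsec-construction-d<=n/s-n-even-2}--\ref{subsec-construction-d<=n/s-n-even-5} and Section~\ref{subsec-constructions-d<=n/2-n-odd}.
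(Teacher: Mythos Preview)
Your proof is correct and follows essentially the same approach as the paper: both start from the regular base graph $G_1(n,\alpha)$, form the complementary bipartite graph of unused $X$--$Y$ edges (which is $(n/2-n/(2\alpha))$-regular), and extract a $d'$-regular spanning subgraph with $d'=d-(n/\alpha-1)$ to add on. The only cosmetic difference is that the paper invokes Hall's marriage theorem to obtain the $d'$-regular subgraph directly, whereas you use K\"onig's edge-coloring theorem to decompose into perfect matchings and take $k$ of them --- these are equivalent standard arguments.
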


\begin{proof}
We begin with $G_1(n,\alpha)$ as defined in Section \ref{subsec-base-graph-1}. Since $G_1(n,\alpha)$ is already regular (all vertices have degree $(n/\alpha)-1$) in this case we do not need to add any edges to  $G_1(n,\alpha)$ to make it regular, so here we set $G_2(n,\alpha):=G_1(n,\alpha)$. (In some later cases $G_2(n,\alpha)$ will be a proper supergraph of $G_1(n,\alpha)$.) 

Now let $G_3(n,\alpha)$ be the graph on vertex set $X \cup Y$ whose edges are all those pairs $\{x,y\}$ with $x \in X$ and $y \in Y$ such that $\{x,y\}$ is not an edge of $G_2(n,\alpha)$. Observe that $G_3(n,\alpha)$ is an $((n/2)-(n/(2\alpha)))$-regular bipartite graph. It follows (via a standard application of Hall's marriage theorem) that for every $d'$ satisfying $0 \leq d' \leq (n/2)-(n/(2\alpha))$, $G_3(n,\alpha)$ has a $d'$-regular subgraph. Select one with $d'=d-((n/\alpha)-1)$, and call it $G_4(n,d,\alpha)$. Then $G(n,d,\alpha)=G_2(n,\alpha)\cup G_4(n,d,\alpha)$ is an $(n,d,\alpha)$-graph that has $Z(n/2,\alpha)$ as an induced subgraph.
\end{proof}

\subsubsection{When $\alpha$ does not divide $n/2$, and $n_a=n_b$} \label{subsec-construction-d<=n/s-n-even-2}

Here and in subsequent parts of Section \ref{subsec-constructions-d<=n/2-n-even}, see Section \ref{subsec-base-graph-1} for the definitions of $a, b, n_a$ and $n_b$.

\begin{theorem} \label{thm-alpha-not-divide-n/2-n_a=n_b}
Fix $n, d$ and $\alpha$ with $n \geq 2$ and even, $1 \leq \alpha \leq n/2$, $n/2$ not divisible by $\alpha$, $n_a=n_b$, and $a+b-1\leq d \leq n/2+a-1$. There is an $(n,d,\alpha)$-graph $G(n,d,\alpha)$ that has $Z(n/2,\alpha)$ as an induced subgraph.  
\end{theorem}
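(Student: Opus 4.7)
The plan is to follow the blueprint of Theorem~\ref{thm-alpha-divides-n/2} and the template sketched in Section~\ref{subsec-constructions-d<=n/2-n-even}: start from the base graph $G_1(n,\alpha)$, pass to a regularised graph $G_2(n,\alpha)$, form the ``leftover'' bipartite graph $G_3(n,\alpha)$ of non-$G_2$ pairs between $X$ and $Y$, and extract from $G_3$ a regular spanning subgraph $G_4(n,d,\alpha)$ of the correct degree. The convenient feature of the case $n_a=n_b$ is that every pair $(X_i,Y_i)$ consists of one $a$-clique and one $b$-clique (in some order), so every vertex of $G_1(n,\alpha)$ lies in a combined pair of total size $a+b$ and has $G_1$-degree exactly $a+b-1$. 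Thus $G_1$ is already regular and we simply set $G_2(n,\alpha):=G_1(n,\alpha)$; no additional regularising edges are needed.

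Next I define $G_3(n,\alpha)$ to be the bipartite graph on $X\cup Y$ with edge set $\{\{x,y\}:x\in X,\, y\in Y,\, \{x,y\}\notin E(G_2)\}$. A vertex $x\in X_i$ has $|Y_i|$ neighbours in $Y$ under $G_2$, and hence $G_3$-degree $(n/2)-|Y_i|$. Taking $a$ to be the smaller of the two clique sizes (as is forced by the bound $d\leq n/2+a-1$), vertices lying in an $a$-clique on either side have $G_3$-degree $n/2-b$, while vertices lying in a $b$-clique have $G_3$-degree $n/2-a=n/2-b+1$. So $G_3$ is \emph{not} regular; its degrees lie in two classes differing by exactly one. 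The key idea is to remove a matching from $G_3$ so as to pull every degree down to the common value $n/2-b$. Let $B_X\subseteq X$ and $B_Y\subseteq Y$ denote the unions of $b$-cliques on each side; because $n_a=n_b$, both have size $bn_b$. Since every $b$-clique in $X$ is paired with an $a$-clique in $Y$ and vice versa, no edges of $G_2$ run between $B_X$ and $B_Y$, so the subgraph of $G_3$ induced by $B_X\cup B_Y$ is the complete bipartite graph $K_{bn_b,\,bn_b}$, which trivially contains a perfect matching $M$. Removing $M$ from $G_3$ drops the degree of every vertex in $B_X\cup B_Y$ by exactly one and leaves the other vertices untouched, so $G_3\setminus M$ is $(n/2-b)$-regular bipartite.

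Finally, the classical consequence of K\"onig's edge-colouring theorem (or iterated Hall) that every $k$-regular bipartite graph decomposes into $k$ perfect matchings shows that $G_3\setminus M$ contains a spanning $d'$-regular subgraph for every $0\leq d'\leq n/2-b$. Choosing $d':=d-(a+b-1)$, which satisfies $0\leq d'\leq n/2-b$ precisely under the hypothesis $a+b-1\leq d\leq n/2+a-1$, I obtain $G_4(n,d,\alpha)\subseteq G_3$, and then $G(n,d,\alpha):=G_2(n,\alpha)\cup G_4(n,d,\alpha)$ is a $d$-regular graph that contains $Z(n/2,\alpha)$ as an induced subgraph (no edges are ever added inside $X$ or inside $Y$) and therefore has independence number at most $\alpha$. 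The only non-routine step is the degree-balancing one: here it is essentially automatic because $n_a=n_b$ forces the ``excess-degree'' vertices to form a balanced complete bipartite subgraph of $G_3$; in the subsequent subcases where $n_a>n_b$ the analogous regularisation will require a more delicate construction, and that is where I expect the main technical work of the remaining subsections to lie.
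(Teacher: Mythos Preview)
Your proof is correct and follows essentially the same route as the paper. The paper likewise sets $G_2:=G_1$ (already $(a+b-1)$-regular), identifies the degree imbalance in the raw ``leftover'' bipartite graph, and deletes a perfect matching between the $b$-cliques on the $X$ side and the $b$-cliques on the $Y$ side to obtain an $((n/2)-b)$-regular bipartite graph from which $G_4$ is extracted; the only cosmetic difference is that the paper specifies a particular such matching (pairing $X_{i+n_a}$ with $Y_i$ clique-by-clique) rather than invoking an arbitrary perfect matching of $K_{bn_b,bn_b}$ as you do.
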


\begin{proof}
Since $n_a=n_b$ we may assume without loss of generality that $b=a+1$. As in the proof of Theorem \ref{thm-alpha-divides-n/2} we begin with $G_1(n,\alpha)$, which in this case is $(a+b-1)$-regular, so again we set $G_2(n,\alpha):=G_1(n,\alpha)$.

While $G_2(n,\alpha)$ is a regular graph, if we defined $G_3(n,\alpha)$ as in Theorem \ref{thm-alpha-divides-n/2} the resulting graph would not be regular. This is because in $G_2(n,\alpha)$, vertices in $\cup_{1 \leq i \leq n_a} X_i$ have $(n/2)-b$ non-neighbors among $\cup_{1 \leq j \leq \alpha} Y_i$, while those in $\cup_{n_a+1 \leq i \leq \alpha} X_i$ have $(n/2)-a$ non-neighbors among $\cup_{1 \leq j \leq \alpha} Y_i$. Similarly vertices in $\cup_{1 \leq i \leq n_b} Y_i$ have $(n/2)-a$ non-neighbors among $\cup_{1 \leq j \leq \alpha} X_i$, while those in $\cup_{n_b+1 \leq i \leq \alpha} Y_i$ have $(n/2)-b$ non-neighbors among $\cup_{1 \leq j \leq \alpha} X_i$. To put this observation in other words: unlike the situation in Theorem \ref{thm-alpha-divides-n/2}, the collection of thus-far-unused edges between the two copies of $Z(n/2,\alpha)$ in $G_2(n,\alpha)$ does not form a regular bipartite graph. 

With the goal of identifying a high-degree regular bipartite spanning subgraph on $X \cup Y$ that has partition classes $X$ and $Y$, and that does not use any edges from $G_2(n,\alpha)$, let $F(n,\alpha)$ be the graph on $X\cup Y$ that consists of a perfect matching between $X_{i+n_a}$ and $Y_i$ for $i=1,\ldots,n_b$, and has no other edges. We think of $F(n,\alpha)$ as a collection of forbidden edges, that cannot appear in the final construction of $G(n,d,\alpha)$. 

Let us now define $G_3(n,\alpha)$ to be the graph on $X\cup Y$ whose edges are all the pairs $\{x,y\}$ with $x\in X$ and $y \in Y$ such that $\{x,y\}$ is neither an edge of $G_2(n,\alpha)$ nor an edge of $F(n,\alpha)$. 

Observe that $G_3(n,\alpha)$ is an $((n/2)-a-1)$-regular bipartite graph, so it follows that for every $d'$ satisfying $0\leq d'\leq (n/2)-a-1$, $G_2(n,\alpha)$ has a $d'$-regular subgraph. Select one with $d'=d-2a$, and call it $G_4(n,d,\alpha)$. Then $G(n,d,\alpha)=G_2(n,\alpha)\cup G_4(n,d,\alpha)$ is an $(n,d,\alpha)$-graph that has $Z(n/2,\alpha)$ as an induced subgraph.
\end{proof}

Figure \ref{fig4_2} shows some of the intermediate steps of this construction. In this and other figures in Section \ref{subsec-constructions-d<=n/2-n-even} red lines represent complete bipartite graphs between sets of vertices and dotted black lines represent forbidden edges (the edges of $F(n,\alpha)$; so in this case the dotted black lines represent perfect matchings between sets of vertices). Figure \ref{fig4_2} presents the specific situation when $n_a=n_b=3$, but the construction for general $n_a$ and $n_b$ should be clear. 

\begin{figure}[ht!] 
\centering
\begin{tikzpicture}
\node[] at (0,0) {$K_b$};
\node[] at (0,1) {$K_b$};
\node[] at (0,2) {$K_b$};
\node[] at (0,3) {$K_a$};
\node[] at (0,4) {$K_a$};
\node[] at (0,5) {$K_a$};

\node[] at (2,0) {$K_a$};
\node[] at (2,1) {$K_a$};
\node[] at (2,2) {$K_a$};
\node[] at (2,3) {$K_b$};
\node[] at (2,4) {$K_b$};
\node[] at (2,5) {$K_b$};

\draw[red,ultra thick] (0.3,0)--(1.7,0);
\draw[red,ultra thick] (0.3,1)--(1.7,1);
\draw[red,ultra thick] (0.3,2)--(1.7,2);
\draw[red,ultra thick] (0.3,3)--(1.7,3);
\draw[red,ultra thick] (0.3,4)--(1.7,4);
\draw[red,ultra thick] (0.3,5)--(1.7,5);

\node[] at (1,-1) {$G_2(n,\alpha)$};

\node[] at (10,0) {$K_b$};
\node[] at (10,1) {$K_b$};
\node[] at (10,2) {$K_b$};
\node[] at (10,3) {$K_a$};
\node[] at (10,4) {$K_a$};
\node[] at (10,5) {$K_a$};

\node[] at (12,0) {$K_a$};
\node[] at (12,1) {$K_a$};
\node[] at (12,2) {$K_a$};
\node[] at (12,3) {$K_b$};
\node[] at (12,4) {$K_b$};
\node[] at (12,5) {$K_b$};

\draw[red,ultra thick] (10.3,0)--(11.7,0);
\draw[red,ultra thick] (10.3,1)--(11.7,1);
\draw[red,ultra thick] (10.3,2)--(11.7,2);
\draw[red,ultra thick] (10.3,3)--(11.7,3);
\draw[red,ultra thick] (10.3,4)--(11.7,4);
\draw[red,ultra thick] (10.3,5)--(11.7,5);

\draw[dashed,ultra thick] (10.3,0)--(11.7,3);
\draw[dashed,ultra thick] (10.3,1)--(11.7,4);
\draw[dashed,ultra thick] (10.3,2)--(11.7,5);

\node[] at (11,-1) {$G_2(n,\alpha)\cup F(n,\alpha)$};

\end{tikzpicture}
\caption{Illustration of some aspects of the construction when $n_a=n_b$, $b=a+1$, with $n$ even.}
\label{fig4_2} 
\end{figure}
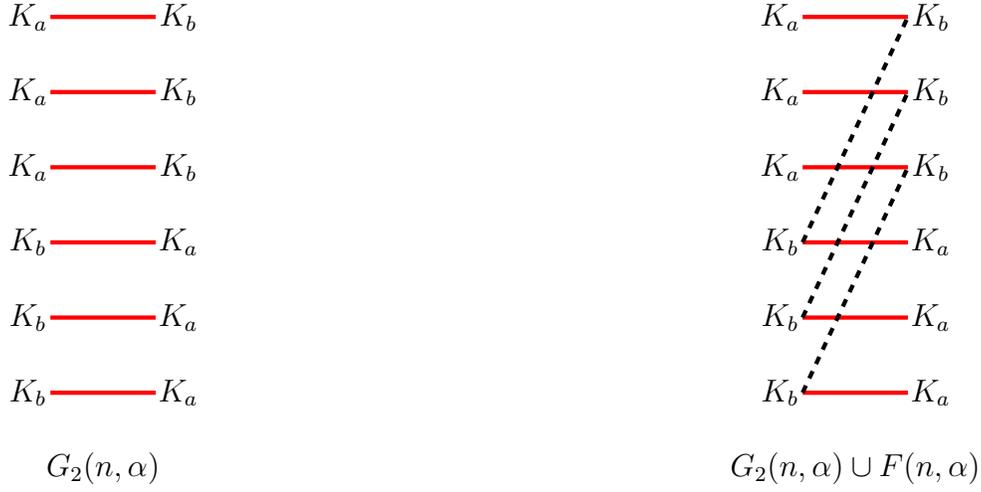 

\subsubsection{When $n_a>n_b$ and $b=a-1$} \label{subsec-construction-d<=n/s-n-even-3}

\begin{theorem} \label{thm-alpha-not-divide-n/2-n_a>n_b-b=a-1}
Fix $n, d$ and $\alpha$ with $n \geq 2$ and even, $1 \leq \alpha \leq n/2$, $n/2$ not divisible by $\alpha$, $n_a>n_b$, $b=a-1$, and $a+b\leq d \leq n/2+a-2$. There is an $(n,d,\alpha)$-graph $G(n,d,\alpha)$ that has $Z(n/2,\alpha)$ as an induced subgraph.  
\end{theorem}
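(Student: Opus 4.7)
The plan is to follow the template of Sections 4.1.2 and 4.1.3, but insert an additional regularization step before identifying forbidden cross-edges, since in this case $G_1(n,\alpha)$ is not itself regular. I will construct edge-disjoint sets $M$, $F$, $G_4$ of cross-edges between $X$ and $Y$ so that (i) $G_2 := G_1(n,\alpha) \cup M$ is $(2a-1)$-regular, (ii) the bipartite graph $G_3$ on $(X,Y)$ of pairs avoiding $G_2 \cup F$ is $(n/2 - a - 1)$-regular, and (iii) $G_4 \subseteq G_3$ is $d'$-regular with $d' := d - (2a-1)$, obtained by the same application of Hall's theorem used in the earlier cases. Setting $G(n,d,\alpha) := G_2 \cup G_4$ then gives a $d$-regular graph containing $Z(n/2,\alpha)$ as an induced subgraph (since only cross-edges are ever added), with independence number $\alpha$ (since the $\alpha$ cliques $X_i \cup Y_i$ of $G_1$ still partition $V(G)$).

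Classify the vertices of $G_1(n,\alpha)$: those in $X_i$ or $Y_i$ with $n_b < i \leq n_a$ have $G_1$-degree $(a-1)+a = 2a-1$ (call these \emph{high-degree}), and all other vertices have $G_1$-degree $(a-1)+b = 2a-2$ (\emph{low-degree}); there are $(a+b)n_b$ low-degree vertices on each side. To define $M$, fix, for each $i = 1, \ldots, n_b$, an arbitrary bijection between $X_i$ and $Y_{n_a+i}$ (both of size $a$) and an arbitrary bijection between $X_{n_a+i}$ and $Y_i$ (both of size $b$), and let $M$ be the set of all corresponding edges. Every edge of $M$ joins a vertex of $X_k$ to a vertex of $Y_\ell$ with $k \neq \ell$, so $M \cap G_1 = \emptyset$, and each low-degree vertex gains exactly one edge from $M$, so $G_2 := G_1 \cup M$ is $(2a-1)$-regular.

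In $G_2$, each vertex in $X_i$ has $a$ neighbors in $Y$ when $i \leq n_a$ and $a+1$ otherwise; symmetrically, each vertex in $Y_j$ has $a+1$ neighbors in $X$ when $j \leq n_b$ and $a$ otherwise. To equalize these ``available'' degrees, define $F$ by fixing, for each $i = 1, \ldots, n_a$, a bijection between $X_i$ and $Y_{n_b+i}$ (both of size $a$). Since $n_b \geq 1$ we have $n_b+i \neq i$, so $F \cap G_1 = \emptyset$; since $n_a > n_b$, for $i \leq n_b$ we have $n_b+i \neq n_a+i$, so $F \cap M = \emptyset$. Hence $F$ is disjoint from $G_2$, and it touches exactly the excess-availability vertices (those in $X_i$ with $i \leq n_a$ and those in $Y_j$ with $j > n_b$), once each. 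Thus $G_3$ is $(n/2 - a - 1)$-regular bipartite, and Hall's theorem gives a $d'$-regular spanning subgraph $G_4 \subseteq G_3$ for any $0 \leq d' \leq n/2 - a - 1$; the hypothesis $a+b \leq d \leq n/2 + a - 2$ places $d' = d - (2a-1)$ in this range.

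The main obstacle is the coordinated bookkeeping in the third step: the hypotheses $n_a > n_b \geq 1$ and $b = a-1$ must together ensure simultaneously that $F$ avoids both $G_1$ and $M$, that $F$ hits precisely the excess-availability vertices on each side, and that the two sides of excess availability have equal cardinality $an_a$. The choice of $M$ (the pairings $X_i \leftrightarrow Y_{n_a+i}$ and $X_{n_a+i} \leftrightarrow Y_i$) and of $F$ (the shift $X_i \leftrightarrow Y_{n_b+i}$) is tuned precisely so that all of these requirements hold at once.
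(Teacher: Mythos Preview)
Your proof is correct and follows essentially the same approach as the paper: build $G_1(n,\alpha)$, add a matching $M$ between the low-degree blocks on the two sides to reach a $(2a-1)$-regular graph $G_2$, declare a forbidden matching $F$ between the excess-availability blocks so that the remaining cross-edge bipartite graph $G_3$ is $(n/2-a-1)$-regular, and then extract a $d'$-regular spanning subgraph via Hall's theorem. The only cosmetic difference is the particular pairing chosen for $M$: you use $X_i\leftrightarrow Y_{n_a+i}$ and $X_{n_a+i}\leftrightarrow Y_i$, whereas the paper uses $X_i\leftrightarrow Y_{\alpha+1-i}$ and $X_{\alpha+1-j}\leftrightarrow Y_j$; both cover exactly the low-degree vertices and both lead to the same $F$ (matchings $X_i\leftrightarrow Y_{n_b+i}$ for $i\le n_a$). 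Your explicit verification that $F$ is edge-disjoint from both $G_1$ and $M$ is in fact slightly more careful than the paper's treatment.
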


\begin{proof}
As in the previous cases, we start with $G_1(n,\alpha)$ as our base. Unlike the previous two cases, however, $G_1(n,\alpha)$ is not regular. Indeed, the degree of a vertex in either $X_i$ or $Y_i$, $1 \leq i \leq n_b$, is $a+b-1$, as is the degree of a vertex in either $X_{\alpha+1-i}$ or $Y_{\alpha+1-i}$, $1 \leq i \leq n_b$. However, vertices in $X_i$ or $Y_i$, $n_b+1 \leq i \leq n_a$, have degree $2a-1$, which is one larger than $a+b-1$.

We augment $G_1(n,\alpha)$ by adding a perfect matching between $X_i$ and $Y_{\alpha+1-i}$ for $i=1,\ldots, n_b$ and a perfect matching between $X_{\alpha+1-j}$ and $Y_j$ for $j=1,\ldots,n_b$. The resulting graph, which we name $G_2(n,\alpha)$, is $(2a-1)$-regular and still has independence number $\alpha$.

As in the proof of Theorem \ref{thm-alpha-not-divide-n/2-n_a=n_b}, we next identify a collection of edges between $X$ and $Y$ that do not appear in $G_2(n,\alpha)$, such that if those edges are removed from the as-yet-unused edges between $X$ and $Y$, the remaining edges form a high-degree regular graph. Specifically, define $F(n,\alpha)$ to be the graph on $X\cup Y$ which consists of a perfect matching between $X_i$ and $Y_{n_b+i}$ for $i=1,\ldots,n_a$, a perfect matching between $X_{n_a+j}$ and $Y_i$ for $i=1,\ldots,n_a$, and no other edges. 

Now let $G_3(n,\alpha)$ be the graph on $X\cup Y$ whose edges are all the pairs $\{x,y\}$ with $x\in X$ and $y \in Y$ such that $\{x,y\}$ is neither an edge of $G_2(n,\alpha)$ nor of $F(n,\alpha)$. Observe that $G_3(n,\alpha)$ is an $((n/2)-a-1)$-regular bipartite graph, so it follows that for every $d'$ satisfying $0\leq d'\leq n/2-a-1$, $G_3(n,\alpha)$ has a $d'$-regular subgraph. Select one with $d'=d-(2a-1)$, and call it $G_4(n,d,\alpha)$. Then $G(n,d,\alpha)=G_2(n,\alpha)\cup G_4(n,d,\alpha)$ is an $(n,d,\alpha)$-graph that has $Z(n/2,\alpha)$ as an induced subgraph.
\end{proof}

Figure \ref{fig4_3} shows some of the intermediate steps of this construction. As in the case of Figure \ref{fig4_2}, red lines here represent complete bipartite graphs between sets of vertices and dotted black lines represent forbidden perfect matchings. 
The figure here also includes solid black lines, which here and elsewhere in Section \ref{subsec-constructions-d<=n/2-n-even} represent perfect matchings between sets of vertices. Figure \ref{fig4_3} specifically represents the situation when $n_a=5$ and $n_b=2$, but the construction for general $n_a$ and $n_b$ should be clear.

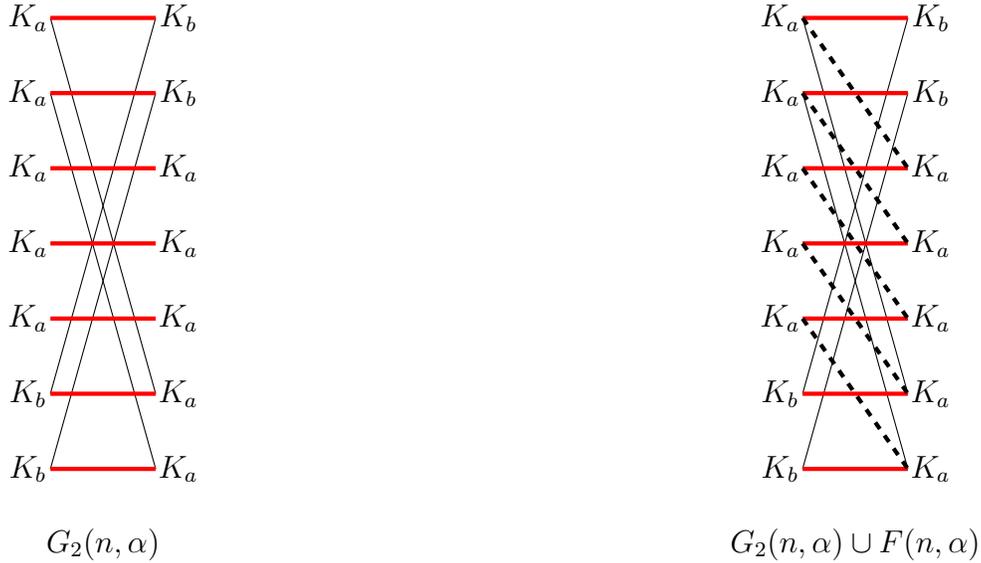
\begin{figure}[ht!] 
\centering
\begin{tikzpicture}
\node[] at (0,0) {$K_b$};
\node[] at (0,1) {$K_b$};
\node[] at (0,2) {$K_a$};
\node[] at (0,3) {$K_a$};
\node[] at (0,4) {$K_a$};
\node[] at (0,5) {$K_a$};
\node[] at (0,6) {$K_a$};

\node[] at (2,0) {$K_a$};
\node[] at (2,1) {$K_a$};
\node[] at (2,2) {$K_a$};
\node[] at (2,3) {$K_a$};
\node[] at (2,4) {$K_a$};
\node[] at (2,5) {$K_b$};
\node[] at (2,6) {$K_b$};

\draw (0.3,0)--(1.7,5);
\draw (0.3,1)--(1.7,6);
\draw (0.3,6)--(1.7,1);
\draw (0.3,5)--(1.7,0);

\draw[red,ultra thick] (0.3,0)--(1.7,0);
\draw[red,ultra thick] (0.3,1)--(1.7,1);
\draw[red,ultra thick] (0.3,2)--(1.7,2);
\draw[red,ultra thick] (0.3,3)--(1.7,3);
\draw[red,ultra thick] (0.3,4)--(1.7,4);
\draw[red,ultra thick] (0.3,5)--(1.7,5);
\draw[red,ultra thick] (0.3,6)--(1.7,6);

\node[] at (1,-1) {$G_2(n,\alpha)$};

\node[] at (10,0) {$K_b$};
\node[] at (10,1) {$K_b$};
\node[] at (10,2) {$K_a$};
\node[] at (10,3) {$K_a$};
\node[] at (10,4) {$K_a$};
\node[] at (10,5) {$K_a$};
\node[] at (10,6) {$K_a$};

\node[] at (12,0) {$K_a$};
\node[] at (12,1) {$K_a$};
\node[] at (12,2) {$K_a$};
\node[] at (12,3) {$K_a$};
\node[] at (12,4) {$K_a$};
\node[] at (12,5) {$K_b$};
\node[] at (12,6) {$K_b$};

\draw (10.3,0)--(11.7,5);
\draw (10.3,1)--(11.7,6);
\draw (10.3,6)--(11.7,1);
\draw (10.3,5)--(11.7,0);

\draw[red,ultra thick] (10.3,0)--(11.7,0);
\draw[red,ultra thick] (10.3,1)--(11.7,1);
\draw[red,ultra thick] (10.3,2)--(11.7,2);
\draw[red,ultra thick] (10.3,3)--(11.7,3);
\draw[red,ultra thick] (10.3,4)--(11.7,4);
\draw[red,ultra thick] (10.3,5)--(11.7,5);
\draw[red,ultra thick] (10.3,6)--(11.7,6);

\draw[dashed,ultra thick] (10.3,6)--(11.7,4);
\draw[dashed,ultra thick] (10.3,5)--(11.7,3);
\draw[dashed,ultra thick] (10.3,4)--(11.7,2);
\draw[dashed,ultra thick] (10.3,3)--(11.7,1);
\draw[dashed,ultra thick] (10.3,2)--(11.7,0);

\node[] at (11,-1) {$G_2(n,\alpha)\cup F(n,\alpha)$};

\end{tikzpicture}
\caption{Illustration of some aspects of the construction when $n_a>n_b$, $b=a-1$, $n$ even.}
\label{fig4_3}
\end{figure}

\subsubsection{When $n_a\geq n_b+2$ and $b=a+1$} \label{subsec-construction-d<=n/s-n-even-4}

\begin{theorem} \label{thm-alpha-not-divide-n/2-n_a>=n_b+2-b=a+1}
Fix $n, d$ and $\alpha$ with $n \geq 2$ and even, $1 \leq \alpha \leq n/2$, $n/2$ not divisible by $\alpha$, $n_a\geq n_b+2$, $b=a+1$, and $a+b-1 \leq d \leq n/2+a-1$. There is an $(n,d,\alpha)$-graph $G(n,d,\alpha)$ that has $Z(n/2,\alpha)$ as an induced subgraph.  
\end{theorem}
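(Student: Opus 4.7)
The plan is to follow the three-stage template used in Theorems \ref{thm-alpha-not-divide-n/2-n_a=n_b} and \ref{thm-alpha-not-divide-n/2-n_a>n_b-b=a-1}: first, augment the base graph $G_1(n,\alpha)$ of Section \ref{subsec-base-graph-1} by a set $M$ of matching edges to obtain an $(a+b-1)$-regular graph $G_2(n,\alpha)$; second, identify a set $F(n,\alpha)$ of forbidden bipartite edges so that the graph $G_3(n,\alpha)$ consisting of all $X$--$Y$ pairs lying in neither $G_2(n,\alpha)$ nor $F(n,\alpha)$ is regular; third, extract a $d'$-regular spanning subgraph $G_4(n,d,\alpha)$ of $G_3(n,\alpha)$ with $d'=d-(a+b-1)$, and set $G(n,d,\alpha):=G_2(n,\alpha)\cup G_4(n,d,\alpha)$. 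Because no edges are ever added inside $X$ or inside $Y$, the graph $G(n,d,\alpha)$ retains $Z(n/2,\alpha)$ as an induced subgraph; because the $\alpha$ sets $X_i\cup Y_i$ remain cliques throughout, its independence number stays at most $\alpha$.

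For the first stage, the hypothesis $b=a+1$ together with a short degree count shows that every vertex of $G_1(n,\alpha)$ has degree $2a=a+b-1$ except those lying in $X_i$ or $Y_i$ with $n_b+1\leq i\leq n_a$, which have degree $2a-1$. To lift these deficient vertices to degree $a+b-1$ I would add a perfect matching $M$ between $\bigcup_{i=n_b+1}^{n_a}X_i$ and $\bigcup_{i=n_b+1}^{n_a}Y_i$ that avoids the previously used edges $X_i\times Y_i$. The auxiliary bipartite graph of admissible pairs is $a(n_a-n_b-1)$-regular, and the hypothesis $n_a\geq n_b+2$ is exactly what guarantees positive regularity degree; K\"onig's theorem then yields the required $M$. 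This is the only stage at which $n_a\geq n_b+2$ is used, and it is the step I expect to be the principal (though not deep) obstacle.

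For the second stage I would count bipartite non-neighborhoods in $G_2(n,\alpha)$. A direct calculation shows that every vertex in $X_i$ with $1\leq i\leq n_a$ has $n/2-b$ non-neighbors in $Y$, while every vertex in $X_i$ with $n_a+1\leq i\leq\alpha$ has $n/2-a=(n/2-b)+1$ non-neighbors in $Y$; symmetrically on the $Y$ side the excess is concentrated in $Y_1,\ldots,Y_{n_b}$. The excess sets $\bigcup_{i=n_a+1}^{\alpha}X_i$ and $\bigcup_{j=1}^{n_b}Y_j$ each have size $n_b b$ and are joined by no edges of $G_2(n,\alpha)$, so I may take $F(n,\alpha)$ to be any perfect matching between them; removing the edges of $G_2(n,\alpha)\cup F(n,\alpha)$ from the complete bipartite graph on $(X,Y)$ then forces $G_3(n,\alpha)$ to be $(n/2-b)$-regular bipartite.

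The final stage is routine: $G_3(n,\alpha)$, being regular bipartite, contains a $d'$-regular spanning subgraph for every integer $0\leq d'\leq n/2-b$, exactly as in the earlier theorems of this section, and the choice $d'=d-(a+b-1)$ is admissible precisely in the stated range $a+b-1\leq d\leq n/2+a-1$. Setting $G(n,d,\alpha):=G_2(n,\alpha)\cup G_4(n,d,\alpha)$ then delivers an $(n,d,\alpha)$-graph with $Z(n/2,\alpha)$ as an induced subgraph, as required.
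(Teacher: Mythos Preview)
Your proposal is correct and follows essentially the same three-stage approach as the paper. The only difference is cosmetic: where you invoke K\"onig's theorem to assert the existence of the matching $M$ (and take $F(n,\alpha)$ to be an arbitrary perfect matching between $\bigcup_{i>n_a}X_i$ and $\bigcup_{j\le n_b}Y_j$), the paper writes down explicit choices --- a cyclic shift $X_i\leftrightarrow Y_{i-1}$ among the indices $n_b+1,\ldots,n_a$ for $M$, and the natural pairing $X_{n_a+j}\leftrightarrow Y_j$ for $F$ --- but the degree counts and the resulting regularity of $G_3(n,\alpha)$ are identical.
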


\begin{proof}
As usual, we start with $G_1(n,\alpha)$. In this case, we adjust $G_1(n,\alpha)$ to $G_2(n,\alpha)$ by adding a perfect matching between $X_i$ and $Y_{i-1}$ for each $i=n_b+2,\ldots, \alpha-n_b$, and between $X_{n_b+1}$ and $Y_{\alpha-n_b}$. Note that $G_2(n,\alpha)$ is $(a+b-1)$-regular.

For this case an appropriate choice for $F(n,\alpha)$ is the graph on $X\cup Y$ that consists of a perfect matching between $X_i$ and $Y_{i-n_a}$ for $i=n_a+1,\ldots,\alpha$. As before, let $G_3(n,\alpha)$ be the graph on $X\cup Y$ whose edges are all the pairs $\{x,y\}$ with $x\in X$ and $y \in Y$ such that $\{x,y\}$ is not an edge of $G_2(n,\alpha)$ or $F(n,\alpha)$. Observe that $G_3(n,\alpha)$ is an $((n/2)-b)$-regular bipartite graph, so it follows that for every $d'$ satisfying $0\leq d'\leq n/2-b$, $G_3(n,\alpha)$ has a $d'$-regular subgraph. Select one with $d'=d-(a+b-1)$, and call it $G_4(n,d,\alpha)$. Then $G(n,d,\alpha)=G_2(n,\alpha)\cup G_4(n,d,\alpha)$ is an $(n,d,\alpha)$-graph that has $Z(n/2,\alpha)$ as an induced subgraph.
\end{proof}

Figure \ref{fig4_4} shows some of the intermediate steps of this construction. This specific figure represents the situation when $n_a=5$ and $n_b=2$.

\begin{figure}[ht!] 
\centering
\begin{tikzpicture}
\node[] at (0,0) {$K_b$};
\node[] at (0,1) {$K_b$};
\node[] at (0,2) {$K_a$};
\node[] at (0,3) {$K_a$};
\node[] at (0,4) {$K_a$};
\node[] at (0,5) {$K_a$};
\node[] at (0,6) {$K_a$};

\node[] at (2,0) {$K_a$};
\node[] at (2,1) {$K_a$};
\node[] at (2,2) {$K_a$};
\node[] at (2,3) {$K_a$};
\node[] at (2,4) {$K_a$};
\node[] at (2,5) {$K_b$};
\node[] at (2,6) {$K_b$};

\draw (0.3,4)--(1.7,2);
\draw (0.3,3)--(1.7,4);
\draw (0.3,2)--(1.7,3);

\draw[red,ultra thick] (0.3,0)--(1.7,0);
\draw[red,ultra thick] (0.3,1)--(1.7,1);
\draw[red,ultra thick] (0.3,2)--(1.7,2);
\draw[red,ultra thick] (0.3,3)--(1.7,3);
\draw[red,ultra thick] (0.3,4)--(1.7,4);
\draw[red,ultra thick] (0.3,5)--(1.7,5);
\draw[red,ultra thick] (0.3,6)--(1.7,6);

\node[] at (1,-1) {$G_2(n,\alpha)$};

\node[] at (10,0) {$K_b$};
\node[] at (10,1) {$K_b$};
\node[] at (10,2) {$K_a$};
\node[] at (10,3) {$K_a$};
\node[] at (10,4) {$K_a$};
\node[] at (10,5) {$K_a$};
\node[] at (10,6) {$K_a$};

\node[] at (12,0) {$K_a$};
\node[] at (12,1) {$K_a$};
\node[] at (12,2) {$K_a$};
\node[] at (12,3) {$K_a$};
\node[] at (12,4) {$K_a$};
\node[] at (12,5) {$K_b$};
\node[] at (12,6) {$K_b$};

\draw (10.3,4)--(11.7,2);
\draw (10.3,3)--(11.7,4);
\draw (10.3,2)--(11.7,3);
\draw[dashed,ultra thick] (10.3,1)--(11.7,6);
\draw[dashed,ultra thick] (10.3,0)--(11.7,5);

\draw[red,ultra thick] (10.3,0)--(11.7,0);
\draw[red,ultra thick] (10.3,1)--(11.7,1);
\draw[red,ultra thick] (10.3,2)--(11.7,2);
\draw[red,ultra thick] (10.3,3)--(11.7,3);
\draw[red,ultra thick] (10.3,4)--(11.7,4);
\draw[red,ultra thick] (10.3,5)--(11.7,5);
\draw[red,ultra thick] (10.3,6)--(11.7,6);

\node[] at (11,-1) {$G_2(n,\alpha)\cup F(n,\alpha)$};

\end{tikzpicture}
\caption{Illustration of some aspects of the construction when $b=a+1,n_a\geq n_b+2$, $n$ even.}
\label{fig4_4}
\end{figure}
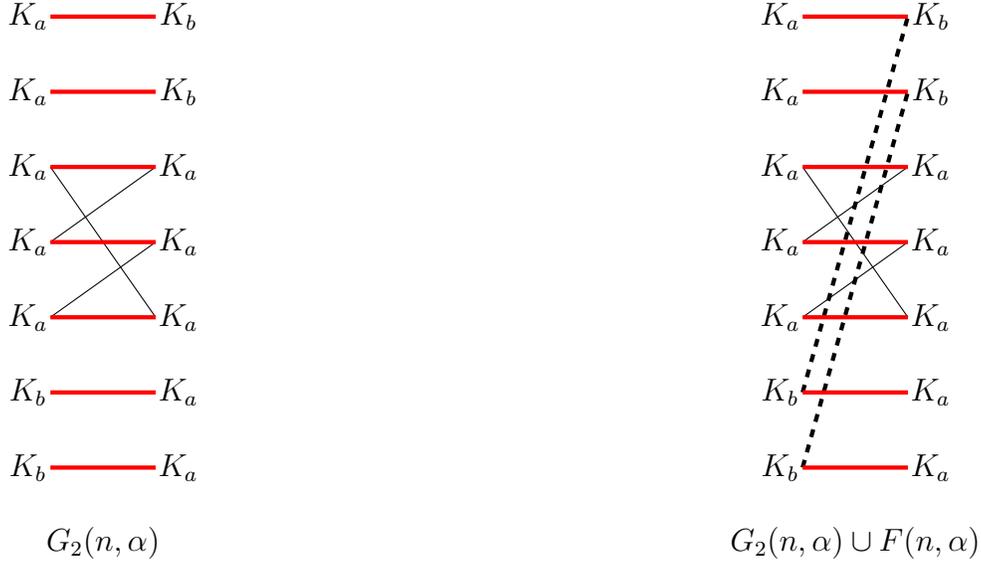

\subsubsection{When $n_a=n_b+1$, $b=a+1$} \label{subsec-construction-d<=n/s-n-even-5}
  
\begin{theorem} \label{thm-alpha-not-divide-n/2-n_a=n_b+1-b=a+1}
Fix $n, d$ and $\alpha$ with $n \geq 2$ and even, $1 \leq \alpha \leq n/2$, $n/2$ not divisible by $\alpha$, $n_a = n_b+1$, $b=a+1$, and $a+b \leq d \leq n/2+b-2$. There is an $(n,d,\alpha)$-graph $G(n,d,\alpha)$ that has $Z(n/2,\alpha)$ as an induced subgraph.  
\end{theorem}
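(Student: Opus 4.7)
The plan is to follow the template of the preceding cases in Section~\ref{subsec-constructions-d<=n/2-n-even}: build $G(n,d,\alpha) = G_2(n,\alpha) \cup G_4(n,d,\alpha)$, where $G_2$ is a regular augmentation of $G_1(n,\alpha)$ and $G_4$ is a $d'$-regular subgraph of the leftover bipartite graph on $X \cup Y$. The feature that necessitates a new argument is that $n_a = n_b + 1$ leaves only a single ``middle'' clique $X_{n_b+1}$ (size $a$) in $X$ and one $Y_{n_b+1}$ in $Y$. The cyclic matching rotation used in the proof of Theorem~\ref{thm-alpha-not-divide-n/2-n_a>=n_b+2-b=a+1} collapses here because the only available rotation partner for $X_{n_b+1}$ would be $Y_{n_b+1}$, to which it is already completely joined in $G_1$. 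Consequently an $(a+b-1)$-regular $G_2$ is unreachable, and the best I can aim for is an $(a+b)$-regular $G_2$---this is why the hypothesis demands $d \geq a+b$ rather than $d \geq a+b-1$.

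The construction of $G_2$ I propose: pick subsets $T_X \subseteq X \setminus X_{n_b+1}$ and $T_Y \subseteq Y \setminus Y_{n_b+1}$ of size $2a$ each, chosen to lie in size-$a$ cliques when possible (take $T_X = X_1 \cup X_2$ and $T_Y = Y_{n_a+1} \cup Y_{n_a+2}$ when $n_b \geq 2$; when $n_b=1$, i.e.~$\alpha=3$, take $T_X = X_1 \cup U$ and $T_Y = Y_3 \cup V$ with $U \subseteq X_3$ and $V \subseteq Y_1$ each of size $a$). Such choices exist because $n/2 \geq 3a + 1$, which follows from $n/2 = \alpha a + n_b$ and $\alpha \geq 3$. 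Then augment $G_1$ with (i)~a bipartite graph between $X_{n_b+1}$ and $T_Y$ with degree $2$ on the $X_{n_b+1}$ side and $1$ on the $T_Y$ side (realized as a pair of perfect matchings from $X_{n_b+1}$ onto two size-$a$ halves of $T_Y$); (ii)~the analogous graph between $Y_{n_b+1}$ and $T_X$; and (iii)~a perfect matching between $X \setminus (X_{n_b+1} \cup T_X)$ and $Y \setminus (Y_{n_b+1} \cup T_Y)$ using only non-$G_1$ edges, existing by Hall's theorem. All added edges are non-$G_1$ edges of $X \times Y$, so $G_2$ keeps $Z(n/2,\alpha)$ as an induced subgraph on $X$ and retains independence number $\alpha$; a vertex-by-vertex degree count confirms $(a+b)$-regularity, since middle-clique vertices gain $+2$ and all others gain $+1$.

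Next I would take $F(n,\alpha)$ to be a perfect matching between $\bigcup_{i > n_a} X_i$ and $\bigcup_{j \leq n_b} Y_j$ (both of total size $n_b b$), chosen edge-disjoint from $G_2 \setminus G_1$; its role is to cancel the single-unit degree surplus that size-$b$ clique vertices otherwise carry over size-$a$ clique vertices after the $G_2 \setminus G_1$ edges are removed from the bipartite complement of $G_1$. Defining $G_3(n,\alpha)$ to be the graph on $X \cup Y$ whose edges are the pairs $xy$ with $x \in X$, $y \in Y$ and $xy \notin G_2 \cup F$, a per-vertex degree check verifies that $G_3$ is $((n/2)-a-2)$-regular bipartite. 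Hall's theorem then yields a $d'$-regular subgraph $G_4(n,d,\alpha) \subseteq G_3$ for every $0 \leq d' \leq (n/2)-a-2$; choosing $d' = d - (a+b)$---which lies in this range precisely when $d \in [a+b,\, n/2+b-2]$---and putting $G(n,d,\alpha) = G_2(n,\alpha) \cup G_4(n,d,\alpha)$ completes the construction.

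The chief technical obstacle is the simultaneous realization of the matching in (iii) and the matching $F$ as edge-disjoint subgraphs, since both live in the bipartite graph between $\bigcup_{i>n_a}X_i$ and $\bigcup_{j \leq n_b}Y_j$. For $n_b \geq 2$ I would resolve this by splitting (iii) into a perfect matching between those two blocks together with a perfect matching between the remaining $a$-cliques, and then take $F$ to be a \emph{second} perfect matching inside the first block, edge-disjoint from the first; two such edge-disjoint matchings exist because the block is an $n_b b$-regular complete bipartite graph with $n_b b \geq 2$. The extremal case $n_b = 1$ (equivalently $\alpha = 3$) needs only the single leftover edge $x^* y^* \in X_3 \times Y_1$ in (iii), and $F$ is then any perfect matching $X_3 \to Y_1$ avoiding this one edge.
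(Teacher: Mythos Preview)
Your proposal is correct and follows the same template as the paper's proof: augment $G_1(n,\alpha)$ to an $(a+b)$-regular graph $G_2(n,\alpha)$, choose a forbidden set $F(n,\alpha)$ so that the leftover bipartite graph $G_3(n,\alpha)$ is $((n/2)-a-2)$-regular, and extract a $d'$-regular $G_4$ with $d'=d-(a+b)$. The degree bookkeeping and the range $a+b \leq d \leq n/2+b-2$ come out identically.

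Where you and the paper differ is only in the concrete realization of the augmenting edges. The paper pairs off cliques explicitly (matchings $X_i \leftrightarrow Y_{\alpha+1-i}$ and $X_{j+n_a+1} \leftrightarrow Y_j$ for the outer indices, then $X_{n_a-1}\leftrightarrow Y_{n_a}$ and $X_{n_a}\leftrightarrow Y_{n_a+1}$), and handles the remaining size mismatch by splitting the two size-$b$ cliques $X_{n_a+1}$ and $Y_{n_a-1}$ into a singleton plus a size-$a$ piece, routing two more matchings through $X_{n_a}$ and $Y_{n_a}$ and a single edge between the two singletons. Your approach instead absorbs the $+2$ needed at $X_{n_a}$ and $Y_{n_a}$ into fixed size-$2a$ sets $T_Y$ and $T_X$, and then invokes Hall's theorem for the remaining $+1$ matching and for $F$. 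The paper's construction is fully explicit and avoids any appeal to Hall beyond the final extraction of $G_4$; yours is a bit more flexible but requires the small case split $n_b=1$ versus $n_b\geq 2$ and the verification that the matching in (iii) and the matching $F$ can be taken edge-disjoint. Both are equally valid executions of the same strategy.
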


\begin{proof}
As in all the previous cases we start with $G_1(n,\alpha)$ as our base graph. Each vertex in $G_1(n,\alpha)$ has degree $a+b-1$, except for the vertices in $X_{n_a}$ and $Y_{n_a}$, which have degree $2a-1$, precisely one less. Unfortunately, we cannot simply add a perfect matching between $X_{n_a}$ and $Y_{n_a}$ to increase the degree of those vertices by 1 since every edge between them has already been used in the construction of $G_1(n,\alpha)$. Because of this, to make our modified $G_2(n,\alpha)$ graph regular we are forced to increase the degree of the vertices in $X_{n_a}$ and $Y_{n_a}$ by 2 and increase the degree of all other vertices in $G_1(n,\alpha)$ by 1. We detail the necessary additional edges below.

First, we put a perfect matching between $X_i$ and $Y_{\alpha+1-i}$ for $i=1,\ldots, n_a-2$, and between $X_{j+n_a+1}$ and $Y_j$ for $j=1,\ldots,n_a-2$. Additionally, we add a perfect matching between $X_{n_a-1}$ and $Y_{n_a}$ and between $X_{n_a}$ and $Y_{n_a+1}$. 

To make the remaining edges easier to describe we split the vertex set $X_{n_a+1}$ into two subsets: one consisting of a single vertex (call it $X_{n_a+1,1}$), and the other consisting of the remaining $a$ vertices (call it $X_{n_a+1,a}$). Similarly, we split $Y_{n_a-1}$ into two subsets $Y_{n_a+1,1}$ (of size one) and $Y_{n_a+1,a}$ (of size $a$). Now we add a perfect matching between $X_{n_a}$ and $Y_{n_a+1,a}$, and a perfect matching between $X_{n_a+1,a}$ and $Y_{n_a}$. Finally, we add the edge between $X_{n_a+1,1}$ and $Y_{n_a+1,1}$. The addition of all these edges produces a graph $G_2(n,\alpha)$ that is $(a+b)$-regular.

Now define $F(n,\alpha)$ to be the graph on $X\cup Y$ which consists of a perfect matching between $X_i$ and $Y_{\alpha+1-i}$ for $i=n_a+1,\ldots,\alpha$. 

As usual, now let $G_3(n,\alpha)$ be the graph on $X\cup Y$ whose edges are all the pairs $\{x,y\}$ with $x\in X$ and $y \in Y$ such that $\{x,y\}$ is not an edge of $G_2(n,\alpha)$ or $F(n,\alpha)$. Observe that $G_3(n,\alpha)$ is a $((n/2)-a-2)$-regular bipartite graph, so it follows that for every $d'$ satisfying $0\leq d'\leq (n/2)-a-2$, $G_3(n,\alpha)$ has a $d'$-regular subgraph. Select one with $d'=d-(a+b)$ and call it $G_4(n,d,\alpha)$. Then $G(n,d,\alpha)=G_2(n,\alpha)\cup G_4(n,d,\alpha)$ is an $(n,d,\alpha)$-graph that has $Z(n/2,\alpha)$ as an induced subgraph.
\end{proof}

Figure \ref{fig4_5} shows some of the intermediate steps of this construction. This specific figure represents the situation when $n_a=3$ and $n_b=2$.

\begin{figure}[ht!] 
\centering
\begin{tikzpicture}
\node[] at (0,0) {$K_b$};
\node[blue] at (0,1) {$K_a$};
\node[blue] at (0,2) {$K_1$};
\node[] at (0,3) {$K_a$};
\node[] at (0,4.5) {$K_a$};
\node[] at (0,6) {$K_a$};

\node[] at (2,0) {$K_a$};
\node[] at (2,1.5) {$K_a$};
\node[] at (2,3) {$K_a$};
\node[blue] at (2,4) {$K_1$};
\node[blue] at (2,5) {$K_a$};
\node[] at (2,6) {$K_b$};

\draw (0.3,0)--(1.7,6);
\draw (0.3,1)--(1.7,3);
\draw (0.3,2)--(1.7,4);
\draw (0.3,3)--(1.7,1.5);
\draw (0.3,3)--(1.7,5);
\draw (0.3,4.5)--(1.7,3);
\draw (0.3,6)--(1.7,0);

\draw[red,ultra thick] (0.3,0)--(1.7,0);
\draw[red,ultra thick] (0.3,1)--(1.7,1.5);
\draw[red,ultra thick] (0.3,2)--(1.7,1.5);
\draw[red,ultra thick] (0.3,3)--(1.7,3);
\draw[red,ultra thick] (0.3,4.5)--(1.7,4);
\draw[red,ultra thick] (0.3,4.5)--(1.7,5);
\draw[red,ultra thick] (0.3,6)--(1.7,6);
\draw[red,ultra thick] (0,1.3)--(0,1.7);
\draw[red,ultra thick] (2,4.3)--(2,4.7);

\node[] at (1,-1) {$G_2(n,\alpha)$};

\node[] at (10,0) {$K_b$};
\node[blue] at (10,1) {$K_a$};
\node[blue] at (10,2) {$K_1$};
\node[] at (10,3) {$K_a$};
\node[] at (10,4.5) {$K_a$};
\node[] at (10,6) {$K_a$};

\node[] at (12,0) {$K_a$};
\node[] at (12,1.5) {$K_a$};
\node[] at (12,3) {$K_a$};
\node[blue] at (12,4) {$K_1$};
\node[blue] at (12,5) {$K_a$};
\node[] at (12,6) {$K_b$};

\draw (10.3,0)--(11.7,6);
\draw (10.3,1)--(11.7,3);
\draw (10.3,2)--(11.7,4);
\draw (10.3,3)--(11.7,1.5);
\draw (10.3,3)--(11.7,5);
\draw (10.3,4.5)--(11.7,3);
\draw (10.3,6)--(11.7,0);
\draw[dashed,ultra thick] (10.3,0.2)--(11.7,6.2);
\draw[dashed,ultra thick] (10.3,1.5)--(11.7,4.5);
\draw[red,ultra thick] (10.3,0)--(11.7,0);
\draw[red,ultra thick] (10.3,1)--(11.7,1.5);
\draw[red,ultra thick] (10.3,2)--(11.7,1.5);
\draw[red,ultra thick] (10.3,3)--(11.7,3);
\draw[red,ultra thick] (10.3,4.5)--(11.7,4);
\draw[red,ultra thick] (10.3,4.5)--(11.7,5);
\draw[red,ultra thick] (10.3,6)--(11.7,6);

\node[] at (11,-1) {$G_2(n,\alpha)\cup F_2(n,\alpha)$};

\end{tikzpicture}
\caption{Illustration of some aspects of the construction when $b=a+1,n_a=n_b+1$, $n$ even.}
\label{fig4_5}
\end{figure}
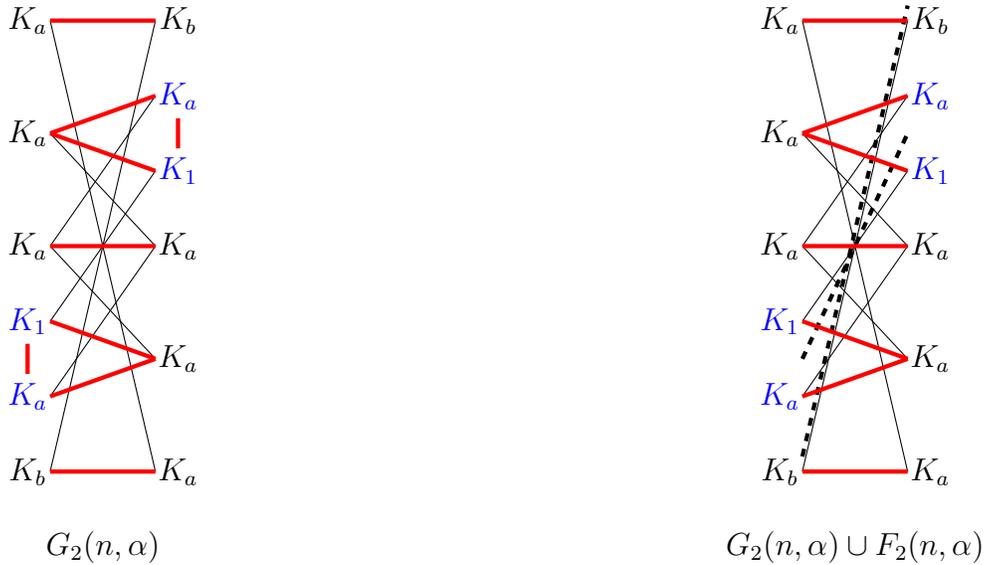

\subsubsection{Putting it all together (even $n$)}

Combining all of the previous cases together (specifically, Theorems \ref{thm-alpha-divides-n/2}, \ref{thm-alpha-not-divide-n/2-n_a=n_b}, \ref{thm-alpha-not-divide-n/2-n_a>n_b-b=a-1}, \ref{thm-alpha-not-divide-n/2-n_a>=n_b+2-b=a+1} and \ref{thm-alpha-not-divide-n/2-n_a=n_b+1-b=a+1}), we have the following theorem. Note that we could formulate a slightly more refined theorem (for example replacing $\lf n/(2\alpha) \rf +\lc n/(2\alpha) \rc$ below with $\lf n/(2\alpha) \rf +\lc n/(2\alpha) \rc -1$ for certain choices of $(n,d,\alpha)$), but what appears below is more than enough for our purposes. 

\begin{theorem} \label{even-n-lb}
For $n, d$ and $\alpha$ with $n \geq 2$ and even, $1 \leq \alpha \leq n/2$, and 
$$
\lf n/(2\alpha) \rf +\lc n/(2\alpha) \rc \leq d \leq n/2+\lf n/(2\alpha) \rf -1,
$$ 
there is an $(n,d,\alpha)$-graph $G(n,d,\alpha)$ that has $Z(n/2,\alpha)$ as an induced subgraph.  
\end{theorem}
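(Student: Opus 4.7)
The plan is to derive the theorem by pure bookkeeping on top of Theorems \ref{thm-alpha-divides-n/2}--\ref{thm-alpha-not-divide-n/2-n_a=n_b+1-b=a+1}. Those five theorems already handle every possible shape that $Z(n/2,\alpha)$ can take under the convention $n_a \geq n_b$, namely: $\alpha \mid n/2$; or $\alpha \nmid n/2$ with $n_a = n_b$ (which forces $b = a+1$); or $n_a > n_b$ with $b = a-1$; or $n_a \geq n_b + 2$ with $b = a+1$; or $n_a = n_b + 1$ with $b = a+1$. Given a triple $(n,d,\alpha)$ satisfying the hypotheses of the present theorem, I would identify which of these subcases applies and then invoke the corresponding earlier theorem, leaving only the verification that the stated $d$-interval
$$
\lf n/(2\alpha) \rf + \lc n/(2\alpha) \rc \leq d \leq n/2 + \lf n/(2\alpha) \rf - 1
$$
is contained in the $d$-interval supplied by that theorem.

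The key translation is between the symbols $a, b$ of the earlier theorems and the quantities $\lf n/(2\alpha) \rf$ and $\lc n/(2\alpha) \rc$. Writing $n/2 = n_a a + n_b b$ and $\alpha = n_a + n_b$, if $\alpha \mid n/2$ one has $a = b = \lf n/(2\alpha) \rf = \lc n/(2\alpha) \rc$; if $\alpha \nmid n/2$, then in the three cases with $b = a+1$ (Theorems \ref{thm-alpha-not-divide-n/2-n_a=n_b}, \ref{thm-alpha-not-divide-n/2-n_a>=n_b+2-b=a+1}, \ref{thm-alpha-not-divide-n/2-n_a=n_b+1-b=a+1}) one has $a = \lf n/(2\alpha) \rf$ and $b = \lc n/(2\alpha) \rc$, while in the remaining case with $b = a-1$ (Theorem \ref{thm-alpha-not-divide-n/2-n_a>n_b-b=a-1}) one has $a = \lc n/(2\alpha) \rc$ and $b = \lf n/(2\alpha) \rf$. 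Substituting these identities, the upper endpoints of all five case-intervals collapse to $n/2 + \lf n/(2\alpha) \rf - 1$, while their lower endpoints become $2\lf n/(2\alpha) \rf - 1,\ 2\lf n/(2\alpha) \rf,\ 2\lf n/(2\alpha) \rf + 1,\ 2\lf n/(2\alpha) \rf,\ 2\lf n/(2\alpha) \rf + 1$ respectively. Since $\lf n/(2\alpha) \rf + \lc n/(2\alpha) \rc$ equals $2\lf n/(2\alpha) \rf$ when $\alpha \mid n/2$ and $2\lf n/(2\alpha) \rf + 1$ otherwise, each of these five lower endpoints is at most the uniform one, and the required containment of intervals follows.

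There is no real obstacle: the constructions have already been carried out in the subsections, and the only work is the elementary arithmetic above. The one mild point worth flagging is that the uniform lower endpoint $\lf n/(2\alpha) \rf + \lc n/(2\alpha) \rc$ is strictly larger than the case-specific one in Theorems \ref{thm-alpha-divides-n/2}, \ref{thm-alpha-not-divide-n/2-n_a=n_b}, and \ref{thm-alpha-not-divide-n/2-n_a>=n_b+2-b=a+1}, so one sacrifices a single value of $d$ in those subcases --- a loss one accepts in exchange for the clean single-formula statement, in line with the remark preceding the theorem.
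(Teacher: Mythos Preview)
Your proposal is correct and follows exactly the paper's approach: the paper's proof is literally the one sentence ``Combining all of the previous cases together (specifically, Theorems \ref{thm-alpha-divides-n/2}, \ref{thm-alpha-not-divide-n/2-n_a=n_b}, \ref{thm-alpha-not-divide-n/2-n_a>n_b-b=a-1}, \ref{thm-alpha-not-divide-n/2-n_a>=n_b+2-b=a+1} and \ref{thm-alpha-not-divide-n/2-n_a=n_b+1-b=a+1}), we have the following theorem,'' together with the remark that a more refined statement is possible. You have simply made explicit the interval-containment bookkeeping that the paper leaves to the reader, and your arithmetic checks out in each of the five subcases.
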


\subsection{The construction for $n$ odd} \label{subsec-constructions-d<=n/2-n-odd}

We will not need to consider $n$ odd in order to derive {\bf LB1}, but for completeness we present a family of constructions for this case. Note that an $(n,d)$-graph with $n$ odd must have $d$ even. With this in mind, we prove the following theorem. 
\begin{theorem} \label{odd-n-lb}
For $n, d$ and $\alpha$ with $n \geq 2$ and odd, $d\geq 2$ even, $1 \leq \alpha \leq (n-1)/2$, and 
$$
2\left(\lf \frac{n-1}{2\alpha} \rf +\lc \frac{n-1}{2\alpha} \rc\right) \leq d \leq \frac{n-1}{2}+\lf \frac{n-1}{2\alpha} \rf -1,
$$
there is an $(n,d,\alpha)$-graph $G(n,d,\alpha)$ that has $Z(\lf n/2 \rf,\alpha)$ as an induced subgraph.  
\end{theorem}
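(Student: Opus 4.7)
The plan is to adapt the even-$n$ construction of Section \ref{subsec-constructions-d<=n/2-n-even} to handle the single extra vertex. Since $\lfloor n/2\rfloor = (n-1)/2$ when $n$ is odd, the target Zykov graph lives on only $(n-1)/2$ vertices, so the remaining $(n+1)/2$ vertices give room to place the ``Zykov partner'' together with one leftover vertex $v^*$ that will absorb the parity mismatch.

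Concretely, I would partition the vertex set as $X\cup Y\cup\{v^*\}$ with $|X|=|Y|=(n-1)/2$. On $X$ I would place a copy of $Z((n-1)/2,\alpha)$ with cliques $X_1,\ldots,X_\alpha$ (this will be the guaranteed induced subgraph, and as in the even-$n$ construction no further edges will ever be added inside $X$), and on $Y$ I would place another copy of $Z((n-1)/2,\alpha)$ with cliques $Y_1,\ldots,Y_\alpha$ paired against $X$'s cliques exactly as in $G_1(n-1,\alpha)$ of Section \ref{subsec-base-graph-1}, adding complete bipartite graphs between each matched pair $X_i,Y_i$. Then I would attach $v^*$ to every vertex of $X_1\cup Y_1$, so that $X_1\cup Y_1\cup\{v^*\}$ becomes a single enlarged clique and the whole base graph is a disjoint union of $\alpha$ cliques; this guarantees independence number exactly $\alpha$ and preserves $Z((n-1)/2,\alpha)$ as an induced subgraph on $X$.

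Next, following the strategy of the even-$n$ proof, I would regularize the base graph up to some uniform low degree $d_0$ by adding a small collection of matching-type edges: some between $X$ and $Y$ (mirroring the matching additions of Theorems \ref{thm-alpha-not-divide-n/2-n_a=n_b} through \ref{thm-alpha-not-divide-n/2-n_a=n_b+1-b=a+1}), and some from $v^*$ to vertices in cliques other than $X_1\cup Y_1$ to lift $v^*$'s degree. The latter additions do not enlarge the maximum independent set, because $v^*$ is already a member of one clique in the $\alpha$-clique partition, so every independent set containing $v^*$ is still capped at $\alpha$. The five subcases from Section \ref{subsec-constructions-d<=n/2-n-even} (divisibility of $(n-1)/2$ by $\alpha$, and the various relations between $n_a$, $n_b$, $a$, $b$) reappear, and the hypothesis $d$ even is essential so that the matching corrections close up. Finally, I would define $G_3(n,\alpha)$ as the bipartite graph on $X\cup Y$ of pairs not yet used and not on a small forbidden matching $F(n,\alpha)$; it is regular of high enough degree that Hall's marriage theorem yields a $d'$-regular spanning subgraph for every $0\le d'\le d-d_0$, and adding such a subgraph produces the desired $(n,d,\alpha)$-graph.

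The main obstacle is the bookkeeping around $v^*$ across the five subcases: one has to verify that, after $v^*$ joins the enlarged clique of size at most $a+b+1$, one can actually reach the prescribed degree $d$ using only further edges from $v^*$ into other cliques plus between-$X$-$Y$ additions that leave $v^*$ alone. The doubled lower bound $d\ge 2(\lfloor(n-1)/(2\alpha)\rfloor+\lceil(n-1)/(2\alpha)\rceil)$ comes from this balance: $v^*$ contributes roughly $a+b$ to its own degree through the enlarged clique, and needs a further $a+b$ edges to other cliques before the usual regular-bipartite fill can take over. The upper bound $d\le (n-1)/2+\lfloor(n-1)/(2\alpha)\rfloor-1$ is inherited, exactly as in the even case, from the maximum regularity of $G_3(n,\alpha)$.
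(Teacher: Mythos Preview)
Your plan has a genuine gap at the final step. You regularize the whole base graph---including $v^*$---to a uniform degree $d_0$, and then add a $d'$-regular bipartite subgraph of $G_3(n,\alpha)$ on the parts $X$ and $Y$. But that bipartite fill does not touch $v^*$ at all: after it, the vertices of $X\cup Y$ have degree $d_0+d'=d$ while $v^*$ is still at $d_0$, so the graph is not $d$-regular. Your explanation of the doubled lower bound (``$v^*$ needs a further $a+b$ edges to other cliques before the usual regular-bipartite fill can take over'') suggests you expect the fill to carry $v^*$ the rest of the way, which it cannot. If instead you try to bring $v^*$ all the way to degree $d$ during the regularization phase, then the $d-|X_1\cup Y_1|$ extra edges out of $v^*$ land on particular vertices of $X\cup Y$, and those vertices are now one ahead of their neighbors; the ``uniform $d_0$'' hypothesis fails and the Hall-type fill no longer applies directly. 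This is fixable, but it is exactly the bookkeeping you have not done, and it is where the real content lies.

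The paper avoids rebuilding anything. It takes the already-constructed $(n-1,d,\alpha)$-graph $G(n-1,d,\alpha)$ from Section~\ref{subsec-constructions-d<=n/2-n-even} as a black box, selects a set $E$ of $d/2$ edges from the bipartite fill $G_4(n-1,d,\alpha)$ (with pairwise distinct endpoints, which is possible since $G_4$ is regular bipartite and $d/2\le(n-1)/2$), deletes $E$, and joins a new vertex to all $d$ endpoints of $E$. Each such endpoint loses one edge and gains one, so stays at degree $d$; the new vertex acquires degree exactly $d$; all other vertices are untouched. The set $E$ is chosen so that every vertex of $X_1\cup Y_1$ is an endpoint, which makes the new vertex adjacent to all of $X_1\cup Y_1$ and preserves the $\alpha$-clique cover; the condition $d/2\ge\lfloor(n-1)/(2\alpha)\rfloor+\lceil(n-1)/(2\alpha)\rceil$ is precisely what is needed for such a choice of $E$ to exist, and this is the source of the doubled lower bound.
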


The basic idea for the proof is that for a given $(n,d,\alpha)$ we start with the corresponding construction in Section \ref{subsec-constructions-d<=n/2-n-even} for the triple $(n-1,d,\alpha)$. The last step in that construction was adding a $d'$-regular bipartite graph with partite classes being the vertex sets of two copies of $Z(\lf n/2 \rf,\alpha)$ that the construction began with. We remove $d/2$ of the these edges, chosen so that there is a pair $(X_i,Y_i)$ (a pair of corresponding cliques from the two copies of $Z(\lf n/2 \rf,\alpha)$ that the construction began with) such that a perfect matching between $X_i$ and $Y_i$ is included among the removed edges. We then introduce an $n$th vertex, and join it to the end-vertices of these $d/2$ removed edges, to make the entire graph $d$-regular. The resulting graph is $d$-regular, has $Z(\lf n/2 \rf, \alpha)$ as an induced subgraph, and still has largest independent size $\alpha$. 

The lower bound on $d$ in Theorem \ref{odd-n-lb} is chosen to achieve three things: 
\begin{itemize}
\item to ensure that the appropriate construction from Section \ref{subsec-constructions-d<=n/2-n-even} is valid with parameters $(n-1,d,\alpha)$;
\item to ensure that in the last step of that construction we have $d' \geq 1$ (so we actually have $d$ edges to remove); and
\item to ensure that $X_i$ and $Y_i$ are small enough that we can remove a perfect matching between $X_i$ and $Y_i$ when we remove $d$ edges. 
\end{itemize}
The factor of 2 in the lower bound on $d$ in Theorem \ref{odd-n-lb}, that is absent from the lower bound on $d$ in Theorem \ref{even-n-lb}, is needed to ensure that the last of these conditions hold.

\begin{proof} (Proof of Theorem \ref{odd-n-lb}.)
We start with an $(n-1,d,\alpha)$-graph $G(n-1,d,\alpha)$ on a vertex set $X \cup Y$ obtained via the constructions in Section \ref{subsec-constructions-d<=n/2-n-even}, with corresponding $G_2(n-1,\alpha)$ and $G_4(n-1,d,\alpha)$. We have $d'\geq 1$ since 
$$
d\geq 2\left(\lf \frac{n}{2\alpha}\rf +\lc \frac{n}{2\alpha} \rc\right) \geq \lf \frac{n-1}{2\alpha} \rf +\lc \frac{n-1}{2\alpha} \rc+1,
$$ 
so, since also $d < n$, we can select a subset $E$ of $E(G_4(n-1,d,\alpha))$ containing $d/2$ edges. 

Moreover, we can choose the edges in $E$ so that each vertex of $X_1$ and each vertex of $Y_1$ is an endvertex of an edge in $E$, since $d/2\geq \lf (n-1)/2\alpha \rf +\lc (n-1)/2\alpha \rc$. This precision is necessary so that the independence number of the graph we are constructing remains equal to $\alpha$.

Next, we add an $n$th vertex, join it to every vertex in $E$, and delete every edge in $E$. Observe that the resulting graph, which we call $G(n,d,\alpha)$, is $d$-regular, and has independence number $\alpha$. To see this latter fact, note that before adding the $n$th vertex, the construction included as a subgraph a disjoint union of $\alpha$ cliques (specifically, on vertex sets $X_1\cup Y_1, \ldots, X_{\alpha}\cup Y_{\alpha}$), and that the $n$th vertex is adjacent to everything in $X_1 \cup Y_1$; so $G(n,d,\alpha)$ has as a subgraph a disjoint union of $\alpha$ cliques. Further, $G(n,d,\alpha)$ evidently has $Z(\lf n/2 \rf,\alpha)$ as an induced subgraph.
\end{proof}

Figure \ref{4_odd} shows the steps of this construction for the specific case when $n=13$, $\alpha=6$, and $d=3$. Here the edges of $G_4(n-1,\alpha)$ are colored blue, the edges of $E$ are orange, and the new edges to the $n$th vertex are green. The steps are similar for other values of $n$, $d$, and $\alpha$.

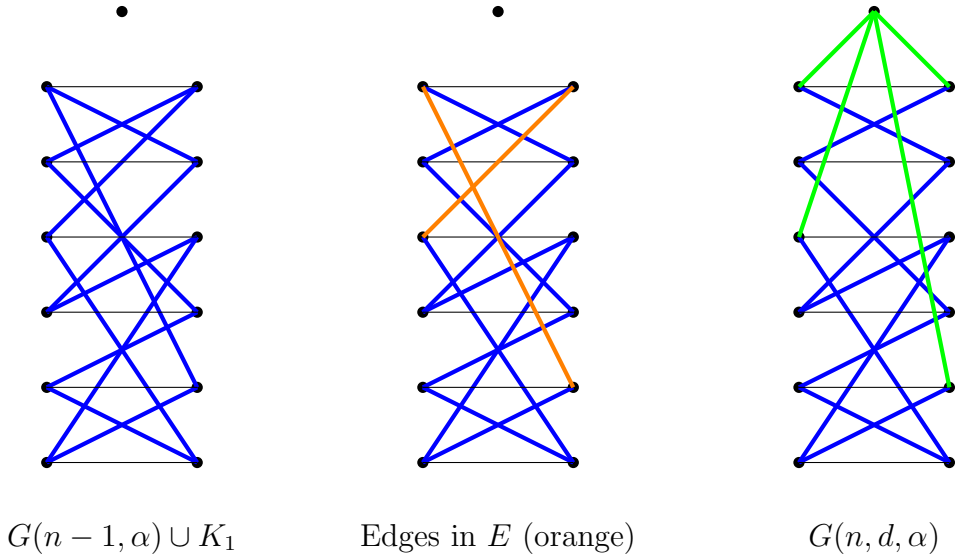
\begin{figure}[ht!] 
\centering
\begin{tikzpicture}

\node[] at (0,1) [circle,fill,inner sep=1.5pt]{};
\node[] at (0,2) [circle,fill,inner sep=1.5pt]{};
\node[] at (0,3) [circle,fill,inner sep=1.5pt]{};
\node[] at (0,4) [circle,fill,inner sep=1.5pt]{};
\node[] at (0,5) [circle,fill,inner sep=1.5pt]{};
\node[] at (0,6) [circle,fill,inner sep=1.5pt]{};

\node[] at (2,1) [circle,fill,inner sep=1.5pt]{};
\node[] at (2,2) [circle,fill,inner sep=1.5pt]{};
\node[] at (2,3) [circle,fill,inner sep=1.5pt]{};
\node[] at (2,4) [circle,fill,inner sep=1.5pt]{};
\node[] at (2,5) [circle,fill,inner sep=1.5pt]{};
\node[] at (2,6) [circle,fill,inner sep=1.5pt]{};

\node[] at (1,7) [circle,fill,inner sep=1.5pt]{};

\draw[blue,ultra thick] (0,1)--(2,4);
\draw[blue,ultra thick] (0,2)--(2,1);
\draw[blue,ultra thick] (0,3)--(2,5);
\draw[blue,ultra thick] (0,4)--(2,6);
\draw[blue,ultra thick] (0,5)--(2,3);
\draw[blue,ultra thick] (0,6)--(2,2);
\draw[blue,ultra thick] (0,1)--(2,2);
\draw[blue,ultra thick] (0,2)--(2,3);
\draw[blue,ultra thick] (0,3)--(2,4);
\draw[blue,ultra thick] (0,4)--(2,1);
\draw[blue,ultra thick] (0,5)--(2,6);
\draw[blue,ultra thick] (0,6)--(2,5);

\draw (0,1)--(2,1);
\draw (0,2)--(2,2);
\draw (0,3)--(2,3);
\draw (0,4)--(2,4);
\draw (0,5)--(2,5);
\draw (0,6)--(2,6);

\node[] at (1,0) {$G(n-1,\alpha)\cup K_1$};

\node[] at (5,1) [circle,fill,inner sep=1.5pt]{};
\node[] at (5,2) [circle,fill,inner sep=1.5pt]{};
\node[] at (5,3) [circle,fill,inner sep=1.5pt]{};
\node[] at (5,4) [circle,fill,inner sep=1.5pt]{};
\node[] at (5,5) [circle,fill,inner sep=1.5pt]{};
\node[] at (5,6) [circle,fill,inner sep=1.5pt]{};

\node[] at (7,1) [circle,fill,inner sep=1.5pt]{};
\node[] at (7,2) [circle,fill,inner sep=1.5pt]{};
\node[] at (7,3) [circle,fill,inner sep=1.5pt]{};
\node[] at (7,4) [circle,fill,inner sep=1.5pt]{};
\node[] at (7,5) [circle,fill,inner sep=1.5pt]{};
\node[] at (7,6) [circle,fill,inner sep=1.5pt]{};

\node[] at (6,7) [circle,fill,inner sep=1.5pt]{};

\draw[blue,ultra thick] (5,1)--(7,4);
\draw[blue,ultra thick] (5,2)--(7,1);
\draw[blue,ultra thick] (5,3)--(7,5);
\draw[blue,ultra thick] (5,5)--(7,3);
\draw[blue,ultra thick] (5,1)--(7,2);
\draw[blue,ultra thick] (5,2)--(7,3);
\draw[blue,ultra thick] (5,3)--(7,4);
\draw[blue,ultra thick] (5,4)--(7,1);
\draw[blue,ultra thick] (5,5)--(7,6);
\draw[blue,ultra thick] (5,6)--(7,5);

\draw (5,1)--(7,1);
\draw (5,2)--(7,2);
\draw (5,3)--(7,3);
\draw (5,4)--(7,4);
\draw (5,5)--(7,5);
\draw (5,6)--(7,6);

\draw[orange,ultra thick] (5,4)--(7,6);
\draw[orange,ultra thick] (5,6)--(7,2);

\node[] at (6,0) {Edges in $E$ (orange)};

\node[] at (10,1) [circle,fill,inner sep=1.5pt]{};
\node[] at (10,2) [circle,fill,inner sep=1.5pt]{};
\node[] at (10,3) [circle,fill,inner sep=1.5pt]{};
\node[] at (10,4) [circle,fill,inner sep=1.5pt]{};
\node[] at (10,5) [circle,fill,inner sep=1.5pt]{};
\node[] at (10,6) [circle,fill,inner sep=1.5pt]{};

\node[] at (12,1) [circle,fill,inner sep=1.5pt]{};
\node[] at (12,2) [circle,fill,inner sep=1.5pt]{};
\node[] at (12,3) [circle,fill,inner sep=1.5pt]{};
\node[] at (12,4) [circle,fill,inner sep=1.5pt]{};
\node[] at (12,5) [circle,fill,inner sep=1.5pt]{};
\node[] at (12,6) [circle,fill,inner sep=1.5pt]{};

\node[] at (11,7) [circle,fill,inner sep=1.5pt]{};

\draw[blue,ultra thick] (10,1)--(12,4);
\draw[blue,ultra thick] (10,2)--(12,1);
\draw[blue,ultra thick] (10,3)--(12,5);
\draw[blue,ultra thick] (10,5)--(12,3);
\draw[blue,ultra thick] (10,1)--(12,2);
\draw[blue,ultra thick] (10,2)--(12,3);
\draw[blue,ultra thick] (10,3)--(12,4);
\draw[blue,ultra thick] (10,4)--(12,1);
\draw[blue,ultra thick] (10,5)--(12,6);
\draw[blue,ultra thick] (10,6)--(12,5);

\draw (10,1)--(12,1);
\draw (10,2)--(12,2);
\draw (10,3)--(12,3);
\draw (10,4)--(12,4);
\draw (10,5)--(12,5);
\draw (10,6)--(12,6);

\draw[green,ultra thick] (10,4)--(11,7);
\draw[green,ultra thick] (10,6)--(11,7);
\draw[green,ultra thick] (12,2)--(11,7);
\draw[green,ultra thick] (12,6)--(11,7);

\node[] at (11,0) {$G(n,d,\alpha)$};

\end{tikzpicture}
\caption{Illustration of some aspects of the construction when $n$ is odd.}
\label{4_odd}
\end{figure}

\subsection{Proof of {\bf LB1}} \label{subsec-proof-of-LB1}

We are now in a position to very quickly establish {\bf LB1}. Let $N$ be the set of multiples of $4$, and let $d_n=n/4$. Let $\alpha_n=c_{\rm ind}n + o(n)$ (the specific choice here is quite arbitrary). By removing a finite initial segment from $N$ if necessary, we can assume that 
$$
d_n \geq \lf \frac{n}{2\alpha_n} \rf + \lc \frac{n}{2\alpha_n} \rc.
$$
This is because $\lf n/(2\alpha_n) \rf + \lc n/(2\alpha_n) \rc = O(1)$ while $d_n = \omega(1)$.

By Theorem \ref{even-n-lb} there exists, for each $n \in N$, an $(n,d_n,\alpha_n)$-graph $G_n$ that has $Z(n/2, \alpha_n)$ as an induced subgraph, and that therefore satisfies $$
i(G_n) \geq i(Z(n/2, \alpha))=k(c_{\rm ind})^{n+o(n)},
$$ 
the last equality coming from Lemma \ref{lem-z-asy}.

\section{Constructing regular graphs with Zykov graphs as subgraphs --- the case $d \geq n/2$} \label{sec-constructions-d>=n/2}

In this section we establish \textbf{LB3}, the range $d \geq n/2$ (and $\alpha \leq n-d$). As with Section \ref{sec-constructions-d<=n/2}, we would ideally like to establish that for all possible triples $(n,d,\alpha)$ (triples for which an $(n,d,\alpha)$-graph actually exists) there is one with $Z(n-d,\alpha)$ as an induced subgraph. We don't quite achieve this goal, but we cover enough ground to deduce \textbf{LB3}. 

We have two constructions, covering different ranges of $d$. In Section \ref{subsection-d>2n/3-construct} we consider $2n/3 \lessapprox d < n$ and in Section \ref{d=2n/3case} we describe a construction valid for $n/2 \lessapprox d \lessapprox n$. This second construction is enough to establish {\bf LB3} for all claimed values of $c_{\rm ind}$ and $c_{\rm deg}$. We include the earlier construction both because it closes a small gap near $d=n$, and because it is quite simple.     

Both constructions will start with two disjoint copies of $Z(n-d,\alpha)$, similar to the base graph $G_1(n,\alpha)$ defined in Section \ref{subsec-constructions-d<=n/2-n-even}, together with a set of $2d-n$ vertices. We pair up the cliques in the two copies of $Z(n-d,\alpha)$, except this time instead of placing complete bipartite graphs between each pair of corresponding cliques in the two copies of $Z(n-d,\alpha)$, we put a perfect matching instead. Between all other pairs of cliques in the two copies of $Z(n-d,\alpha)$ we place complete bipartite graphs. As long as $\alpha > 1$ (which, as observed earlier, we can assume) this still ensures that the largest independent set (thus far) is of size $\alpha$. Inside one of the two copies of $Z(n-d,\alpha)$ we commit to adding no more edges, thus ensuring that $Z(n-d,\alpha)$ appears as an induced subgraph. Finally we add edges inside the other copy of $Z(n-d,\alpha)$, inside the remaining set of $2d-n$ vertices, and between these latter two sets of vertices to make the graph regular, all the while making sure that we do not create an independent set of size larger than $\alpha$.  

\subsection{The base graph $G_1(n,d,\alpha)$} \label{subsec-base-graph-2}

We now establish some notation, and formalise our construction of the base graph $G_1(n,d,\alpha)$. Let $n$ and $d$ satisfy $d \geq n/2$. Then $Z(n-d,\alpha)$ is the disjoint union of $n_a$ cliques of order $a$ and $n_b$ cliques of order $b$ with $|a-b|=1$, unless $\alpha$ divides $n-d$, in which case $a=b$ and it will be convenient to take $n_a=\alpha$ and $n_b=0$. Otherwise, we assume (without loss of generality) that $b > a$. We construct a graph $G_1(n,d,\alpha)$ on vertex set $W\cup X\cup Y$ where $W$, $X$, and $Y$ are disjoint sets, with $W$ of size $2d-n$ and each of $X$ and $Y$ of size $n-d$, as follows:

\begin{itemize}
\item The vertex set $X$ induces a copy of $Z(n-d,\alpha)$ with cliques $X_1,\ldots,X_{n_a}$ of order $a$ and 
$X_{n_a+1},\ldots,X_\alpha$ of order $b$. 
\item The vertex set $Y$ induces a copy of $Z(n-d,\alpha)$ with cliques $Y_1,\ldots,Y_{n_a}$ of order $a$ and $Y_{n_a+1},\ldots,Y_\alpha$ of order $b$. 
\item For each $i=1,\ldots,\alpha$ there is a perfect matching between $X_i$ and $Y_i$. 
\item For each $i\neq j$ there is a complete bipartite graph between $X_i$ and $Y_j$. 
\end{itemize}
Note that the subgraph of $G_1(n,d,\alpha)$ induced by $X \cup Y$ is an $(n-d)$-regular graph with independence number $\alpha$. To see that the independence number is $\alpha$, note that there is an independent set of size $\alpha$ in each of the copies of $Z(n-d,\alpha)$, and there can be no larger independent set contained completely within $X$ or completely within $Y$. Suppose that an independent set includes vertices from both $X$ and $Y$. If $x \in X_i$ is in the independent set, then the vertices from $Y$ must come from $Y_i$, and there can be at most one such, say $y$. It is an easy check that $\{x,y\}$ cannot be extended as an independent set inside $X \cup Y$. Finally, recall from the introduction that we are assuming $\alpha \geq 2$.  

\subsection{Preliminary regular graph constructions}

To aid in the two constructions of $G(n,d,\alpha)$, in addition to $G_1(n,d,\alpha)$ we will make use of two closely related families of regular graphs, defined below. In each case the graphs are described on vertex set $\{v_0, \ldots, v_{m-1}\}$ (of size $m$) and are $r$-regular. 

\begin{itemize}
\item $R(m,r)$: 
\begin{itemize}
\item If $r$ is even, then $v_iv_j$ is an edge if and only if $i-j \in [1,r/2] \cup [-r/2,-1] \pmod{m}$, and
\item if $r$ is odd (and so $m$ is even), then $v_iv_j$ is an edge if and only if either $i-j \in [1,(r-1)/2] \cup [-(r-1)/2,-1] \pmod{m}$ or $i-j = m/2 \pmod{m}$.
\end{itemize}
In other words, with the vertices arranged cyclically each vertex is adjacent to an equal number of vertices immediately to its right and immediately to its left, and also (if necessary) to a single vertex immediately opposite it.   
\item $R(m,r,g)$:
\begin{itemize}
\item
If $r$ is even, then $v_iv_j$ is an edge if and only if $i-j \in [g+1,g+r/2] \cup [-g-r/2,-g-1] \pmod{m}$, and
\item if $r$ is odd (and so $m$ is even), then $v_iv_j$ is an edge if and only if either $i-j \in [g+1,g+(r-1)/2] \cup [-g-(r-1)/2,-g-1] \pmod{m}$ or $i-j = m/2 \pmod{m}$.
\end{itemize}
In other words, with the vertices arranged cyclically each vertex is adjacent to an equal number of vertices immediately to its right and immediately to its left, not including the $g$ vertices nearest it on either side, and also (if necessary) to a single vertex immediately opposite it. Note that a necessary condition here for this graph to actually be $r$-regular is that $g+\lf r/2 \rf \leq m/2 -1$. 
\end{itemize}

Note that $R(m,r,0)$ is simply $R(m,r)$.
Examples of these two constructions for small $m,r$ and $g$ are shown in Figure \ref{R_constructions}.
\begin{figure}[ht!] 
\centering
\begin{tikzpicture}
\node (a) at (0,0)
         {
            \begin{tikzpicture}
               \coordinate (a) at ({2*cos(0)}, {2*sin(0)});
                \coordinate (b) at ({2*cos(30)}, {2*sin(30)});
                \coordinate (c) at ({2*cos(60)}, {2*sin(60)});
                \coordinate (d) at ({2*cos(90)}, {2*sin(90)});
                \coordinate (e) at ({2*cos(120)}, {2*sin(120)});
                \coordinate (f) at ({2*cos(150)}, {2*sin(150)});
                \coordinate (g) at ({2*cos(180)}, {2*sin(180)});
                \coordinate (h) at ({2*cos(210)}, {2*sin(210)});
                \coordinate (i) at ({2*cos(240)}, {2*sin(240)});
                \coordinate (j) at ({2*cos(270)}, {2*sin(270)});
                \coordinate (k) at ({2*cos(300)}, {2*sin(300)});
                \coordinate (l) at ({2*cos(330)}, {2*sin(330)});

                \draw[fill=black] (a) circle (3pt);
                \draw[fill=black] (b) circle (3pt);
                \draw[fill=black] (c) circle (3pt);
                \draw[fill=black] (d) circle (3pt);
                \draw[fill=black] (e) circle (3pt);
                \draw[fill=black] (f) circle (3pt);
                \draw[fill=black] (g) circle (3pt);
                \draw[fill=black] (h) circle (3pt);
                \draw[fill=black] (i) circle (3pt);
                \draw[fill=black] (j) circle (3pt);
                \draw[fill=black] (k) circle (3pt);
                \draw[fill=black] (l) circle (3pt);

                \draw[black,ultra thick] (a)--(b);
                \draw[black,ultra thick] (a)--(c);
                \draw[black,ultra thick] (a)--(l);
                \draw[black,ultra thick] (a)--(k);
                \draw[black,ultra thick] (a)--(g);
                
                \node[] at (0,-3) {Some edges of $R(12,5)$};
                
               \node[] at ({2.5*cos(0)}, {2.5*sin(0)}) {$v_0$};;
                \node[] at ({2.5*cos(30)}, {2.5*sin(30)}) {$v_1$};;
                \node[] at ({2.5*cos(60)}, {2.5*sin(60)}) {$v_2$};;
                \node[] at ({2.5*cos(90)}, {2.5*sin(90)}) {$v_3$};;
                \node[] at ({2.5*cos(120)}, {2.5*sin(120)}) {$v_4$};;
               \node[] at ({2.5*cos(150)}, {2.5*sin(150)}) {$v_5$};;
                \node[] at ({2.5*cos(180)}, {2.5*sin(180)}) {$v_6$};;
               \node[] at ({2.5*cos(210)}, {2.5*sin(210)}) {$v_7$};;
               \node[] at ({2.5*cos(240)}, {2.5*sin(240)}) {$v_8$};;
                \node[] at ({2.5*cos(270)}, {2.5*sin(270)}) {$v_9$};;
               \node[] at ({2.5*cos(300)}, {2.5*sin(300)}) {$v_{10}$};;
                \node[] at ({2.5*cos(330)}, {2.5*sin(330)}) {$v_{11}$};;
                            \end{tikzpicture}
         };
\node (c) at (7,0)
         {
            \begin{tikzpicture}
               \coordinate (a) at ({2*cos(0)}, {2*sin(0)});
                \coordinate (b) at ({2*cos(30)}, {2*sin(30)});
                \coordinate (c) at ({2*cos(60)}, {2*sin(60)});
                \coordinate (d) at ({2*cos(90)}, {2*sin(90)});
                \coordinate (e) at ({2*cos(120)}, {2*sin(120)});
                \coordinate (f) at ({2*cos(150)}, {2*sin(150)});
                \coordinate (g) at ({2*cos(180)}, {2*sin(180)});
                \coordinate (h) at ({2*cos(210)}, {2*sin(210)});
                \coordinate (i) at ({2*cos(240)}, {2*sin(240)});
                \coordinate (j) at ({2*cos(270)}, {2*sin(270)});
                \coordinate (k) at ({2*cos(300)}, {2*sin(300)});
                \coordinate (l) at ({2*cos(330)}, {2*sin(330)});

                \draw[fill=black] (a) circle (3pt);
                \draw[fill=black] (b) circle (3pt);
                \draw[fill=black] (c) circle (3pt);
                \draw[fill=black] (d) circle (3pt);
                \draw[fill=black] (e) circle (3pt);
                \draw[fill=black] (f) circle (3pt);
                \draw[fill=black] (g) circle (3pt);
                \draw[fill=black] (h) circle (3pt);
                \draw[fill=black] (i) circle (3pt);
                \draw[fill=black] (j) circle (3pt);
                \draw[fill=black] (k) circle (3pt);
                \draw[fill=black] (l) circle (3pt);

                \draw[black,ultra thick] (a)--(d);
                \draw[black,ultra thick] (a)--(e);
                \draw[black,ultra thick] (a)--(i);
                \draw[black,ultra thick] (a)--(j);
                \draw[black,ultra thick] (a)--(g);
                \node[] at (0,-3) {Some edges of $R(12,5,2)$};
                
                \node[] at ({2.5*cos(0)}, {2.5*sin(0)}) {$v_0$};;
                \node[] at ({2.5*cos(30)}, {2.5*sin(30)}) {$v_1$};;
                \node[] at ({2.5*cos(60)}, {2.5*sin(60)}) {$v_2$};;
                \node[] at ({2.5*cos(90)}, {2.5*sin(90)}) {$v_3$};;
                \node[] at ({2.5*cos(120)}, {2.5*sin(120)}) {$v_4$};;
               \node[] at ({2.5*cos(150)}, {2.5*sin(150)}) {$v_5$};;
                \node[] at ({2.5*cos(180)}, {2.5*sin(180)}) {$v_6$};;
               \node[] at ({2.5*cos(210)}, {2.5*sin(210)}) {$v_7$};;
               \node[] at ({2.5*cos(240)}, {2.5*sin(240)}) {$v_8$};;
                \node[] at ({2.5*cos(270)}, {2.5*sin(270)}) {$v_9$};;
               \node[] at ({2.5*cos(300)}, {2.5*sin(300)}) {$v_{10}$};;
                \node[] at ({2.5*cos(330)}, {2.5*sin(330)}) {$v_{11}$};;
            \end{tikzpicture}
         };

\end{tikzpicture}
\caption{The edges of $R(12,5)$ and $R(12,5,2)$ that are incident with $v_0$.}
\label{R_constructions}
\end{figure}
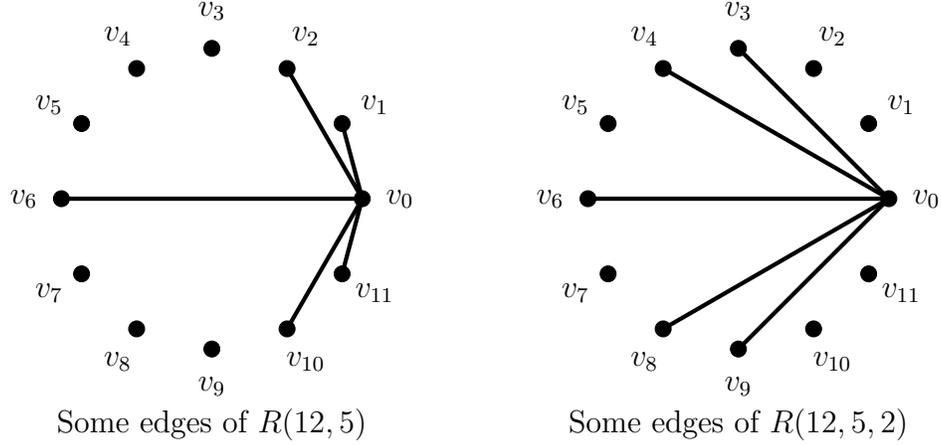

In what follows it will be useful to have upper bounds on the independence numbers of the graphs in these two families.
\begin{lemma} \label{lem-ind-no-reg-graphs}
Let $\alpha(G)$ denote the independence number of a graph $G$. We have:
\begin{equation} \label{R(m,r)bound}
\alpha(R(m,r)) \leq \lc \frac{m}{1+\lf r/2 \rf} \rc
\end{equation}
and more generally
\begin{equation} \label{R(m,r,g)bound}
\alpha(R(m,r,g)) \leq (g+1)\lc\frac{m}{g+1+\lf r/2 \rf}\rc. 
\end{equation}
\end{lemma}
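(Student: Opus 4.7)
My plan is to observe first that $R(m,r)=R(m,r,0)$, so \eqref{R(m,r)bound} is the $g=0$ instance of \eqref{R(m,r,g)bound}; only the latter needs to be proved. The strategy is a covering-by-near-cliques argument: partition the cyclically arranged vertex set $\{v_0,\ldots,v_{m-1}\}$ into $\lc m/(g+1+\lf r/2\rf)\rc$ consecutive arcs, each of length at most $g+1+\lf r/2\rf$, and show that every independent set in $R(m,r,g)$ meets each arc in at most $g+1$ vertices. Summing over arcs then yields \eqref{R(m,r,g)bound}.

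The heart of the argument is the per-arc estimate, which I plan to carry out as follows. Fix an arc $A=\{v_i,v_{i+1},\ldots,v_{i+L-1}\}$ with $L\leq g+1+\lf r/2\rf$. For $v_j,v_k\in A$ with $j<k$, the cyclic difference $k-j$ lies in $\{1,\ldots,g+\lf r/2\rf\}$. From the definition of $R(m,r,g)$, such vertices are adjacent precisely when $k-j\in[g+1,g+\lf r/2\rf]$ (the single antipodal edge that appears when $r$ is odd is at distance $m/2$, which exceeds the arc's diameter under the regularity condition $g+\lf r/2\rf\leq m/2-1$ flagged in the definition). Hence an independent subset $S\subseteq A$ has all pairwise linear differences at most $g$, so $S$ is contained in a window of $g+1$ consecutive positions and $|S|\leq g+1$.

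Multiplying the per-arc bound $g+1$ by the number of arcs $\lc m/(g+1+\lf r/2\rf)\rc$ gives \eqref{R(m,r,g)bound}, and specialising to $g=0$ gives \eqref{R(m,r)bound} (when $g=0$, each arc is in fact a clique of size $1+\lf r/2\rf$, recovering the classical interval-clique-cover bound).

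I do not foresee a real obstacle: the argument is essentially a clique (respectively near-clique) partition of a cycle. The only points needing a line of care are that the last arc may be shorter than $g+1+\lf r/2\rf$ (which only tightens the per-arc estimate) and that for odd $r$ the antipodal edge is irrelevant inside a single arc, both of which are handled by the standing requirement that $R(m,r,g)$ be genuinely $r$-regular.
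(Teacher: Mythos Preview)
Your proposal is correct and follows essentially the same approach as the paper: both reduce to the general case $g\geq 0$, cover the cycle by $\lc m/(g+1+\lf r/2\rf)\rc$ consecutive arcs of length at most $g+1+\lf r/2\rf$, and argue that any independent set meets each arc in at most $g+1$ vertices. The only cosmetic difference is that the paper phrases the per-arc bound by starting the arc at a vertex $v_i\in I$ and noting that $v_i$ is adjacent to each of $v_{i+g+1},\ldots,v_{i+g+\lf r/2\rf}$, whereas you phrase it via pairwise in-arc differences.
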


\begin{proof}
We only consider \eqref{R(m,r,g)bound} since this reduces to \eqref{R(m,r)bound} when $g=0$. 

Suppose $I\subset \{v_0, \ldots, v_{m-1}\}$ is an independent set in $R(m,r,g)$. If $v_i\in I$, then $S_i:=\{v_i, \ldots, v_{i+g+\lf r/2 \rf\pmod{m}}\}$ forms a set of $1+g+\lf r/2 \rf$ consecutive vertices of which at most $g+1$ are in $I$ (specifically, $v_i, \ldots, v_{i+g \pmod{m}}$). It immediately follows that 
$$ 
\alpha(R(m,r,g))\leq(g+1)\lc\frac{m}{g+1+\lf d/2 \rf}\rc.
$$
\end{proof}

\subsection{A construction for $2n/3 \lessapprox d < n$} \label{subsection-d>2n/3-construct}

\begin{theorem} \label{thm-d>=2n/3}
Fix $n, d$ and $\alpha$ with $2n/3 \leq d < n$, $nd$ even, 
\begin{equation} \label{eq-d>=2n/3-cond}
\lc \frac{2d-n}{1+\lf \frac{3d-2n}{2}\rf} \rc \leq \alpha,
\end{equation}
and $\alpha \leq n-d$. There is an $(n,d,\alpha)$-graph $G(n,d,\alpha)$ that has $Z(n-d,\alpha)$ as an induced subgraph.  
\end{theorem}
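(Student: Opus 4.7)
The plan is to augment the base graph $G_1(n,d,\alpha)$ from Section \ref{subsec-base-graph-2} in the simplest possible way: add every edge between $W$ and $X \cup Y$, and place a copy of $R(2d-n,\,3d-2n)$ inside $W$. In $G_1$ each vertex of $X \cup Y$ has degree $n-d$ and each vertex of $W$ has degree $0$. Since we must add no edges inside $X$ (to preserve $Z(n-d,\alpha)$ as an induced subgraph), each $x \in X$ needs exactly $2d-n$ additional neighbours; as $|W| = 2d-n$, the natural (and, for our purposes, simplest) choice is to make every $x \in X$, and symmetrically every $y \in Y$, adjacent to all of $W$. This raises the degree of every vertex of $X \cup Y$ to $d$ and gives each $w \in W$ exactly $2(n-d)$ neighbours outside $W$, so $w$ still requires $d - 2(n-d) = 3d - 2n$ neighbours inside $W$. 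The hypothesis $d \geq 2n/3$ guarantees $3d - 2n \geq 0$; the hypothesis $d < n$ gives $3d-2n \leq 2d-n-1$; and $nd$ even forces $(3d-2n)(2d-n)$ to be even (if $n$ is even then $2d-n$ is even, while if $n$ is odd then $d$ is even and $3d-2n$ is even). So $R(2d-n,\,3d-2n)$ as defined earlier exists and supplies the required edges inside $W$.

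By construction the resulting graph $G(n,d,\alpha)$ is $d$-regular and the subgraph it induces on $X$ is $Z(n-d,\alpha)$. To check that its independence number is $\alpha$, let $I$ be any independent set. If $I \cap W = \emptyset$ then $I \subseteq X \cup Y$, and Section \ref{subsec-base-graph-2} already records that $\alpha(G_1[X \cup Y]) = \alpha$. If instead some $w \in W$ lies in $I$, then because $w$ is adjacent to every vertex of $X \cup Y$ we must have $I \subseteq W$, and Lemma \ref{lem-ind-no-reg-graphs} yields
$$
|I| \leq \alpha\bigl(R(2d-n,\,3d-2n)\bigr) \leq \lc \frac{2d-n}{1+\lf (3d-2n)/2 \rf} \rc \leq \alpha,
$$
where the last inequality is exactly hypothesis \eqref{eq-d>=2n/3-cond}. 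Together with the independent set of size $\alpha$ obtained by picking one vertex from each $X_i$, this pins the independence number at exactly $\alpha$.

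The only delicate step is controlling $\alpha(G[W])$; every other ingredient (degree counting, the fact that $W$ is completely joined to $X\cup Y$, and the existence of the $(3d-2n)$-regular supplement on $W$) is routine. The restriction $d \geq 2n/3$ is exactly what allows a nonnegative number of internal edges at each vertex of $W$, and condition \eqref{eq-d>=2n/3-cond} is exactly what the bound from Lemma \ref{lem-ind-no-reg-graphs} requires in order that $W$ not harbour a too-large independent set. For $d$ appreciably below $2n/3$ this particular strategy breaks down, and the alternative construction of Section \ref{d=2n/3case} is needed.
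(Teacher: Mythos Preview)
Your proof is correct and follows essentially the same approach as the paper: start from $G_1(n,d,\alpha)$, join $W$ completely to $X\cup Y$, place $R(2d-n,3d-2n)$ on $W$, and then verify regularity and bound the independence number via Lemma~\ref{lem-ind-no-reg-graphs} together with hypothesis~\eqref{eq-d>=2n/3-cond}. The only cosmetic differences are your case-split parity check for $(2d-n)(3d-2n)$ versus the paper's direct expansion, and your explicit remark that the independence number is exactly $\alpha$.
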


\medskip

Note that if $d=2n/3$ then \eqref{eq-d>=2n/3-cond} becomes $\alpha \geq \lc n/3 \rc$. Since the largest independent set in an $(n,2n/3)$-graph has size at most $n/3$, it follows that for this choice of $d$ Theorem \ref{thm-d>=2n/3} gives no information. On the other hand if $d=(2/3+\Omega(1))n$ then \eqref{eq-d>=2n/3-cond} becomes $\alpha \geq \Omega(1)$, and so (given that we are considering the situation where $\alpha$ and $d$ scale linearly with $n$) Theorem \ref{thm-d>=2n/3} conveys useful information. This explains the $d \gtrapprox 2n/3$ in the section title.       

\medskip

\begin{proof} (Proof of Theorem \ref{thm-d>=2n/3}.)
We begin our construction with $G_1(n,d,\alpha)$, as described in Section \ref{subsec-base-graph-2}. Noting that 
\begin{itemize}
\item the evenness of $nd$ implies that $(2d-n)(3d-2n)=6d^2-7nd+2n^2$ is even,
\item that $d \geq 2n/3$ implies $3d-2n \geq 0$, and
\item that $d < n$ implies $2d-n > 3d-2n$,
\end{itemize}
we see that the graph $R(2d-n,3d-2n)$ exists. We add edges to $W$ so that $W$ induces a copy of $R(2d-n,3d-2n)$, and we add all edges of the form $uw$ where $u \in X \cup Y$ and $w \in W$. Call the resulting graph $G(n,d,\alpha)$.

The degree of each vertex in $X \cup Y$ is $(n-d) + (2d-n) =d$, and the degree of each vertex in $W$ is $(3d-2n) + 2(n-d) = d$, so $G(n,d,\alpha)$ is $d$-regular. The subgraphs induced by both $X$ and $Y$ are isomorphic to $Z(n-d, \alpha)$. There are no independent sets in $G(n,d,\alpha)$ that use vertices from both $X \cup Y$ and $W$, and, as observed in Section \ref{subsec-base-graph-2} there is no independent set of size greater than $\alpha$ in $X \cup Y$. So to show that $G$ is an $(n,d,\alpha)$-graph it remains to show that there is no independent set of size greater than $\alpha$ in $W$.   
By \eqref{R(m,r)bound} we have
$$
\alpha(R(2d-n,3d-2n)) \leq \lc\frac{2d-n}{1+\lf \frac{3d-2n}{2}\rf}\rc, 
$$
and this is at most $\alpha$ by \eqref{eq-d>=2n/3-cond}.
\end{proof}

Figure \ref{fig5_1} schematically illustrates the construction in the proof of Theorem \ref{thm-d>=2n/3}.

\begin{figure}[ht!]
\centering
\begin{tikzpicture}
\node[] at (0,1) {$K_a$};
\node[] at (0,3) {$K_a$};
\node[] at (0,5) {$K_b$};

\node[] at (2,1) {$K_a$};
\node[] at (2,3) {$K_a$};
\node[] at (2,5) {$K_b$};

\node[] at (5,3) {$R(2d-n,3d-2n)$};

\draw[black,ultra thick] (0.3,1)--(1.7,1);
\draw[black,ultra thick] (0.3,3)--(1.7,3);
\draw[black,ultra thick] (0.3,5)--(1.7,5);

\draw[red,ultra thick] (0.3,1)--(1.7,3);
\draw[red,ultra thick] (0.3,3)--(1.7,5);
\draw[red,ultra thick] (0.3,5)--(1.7,1);
\draw[red,ultra thick] (0.3,1)--(1.7,5);
\draw[red,ultra thick] (0.3,3)--(1.7,1);
\draw[red,ultra thick] (0.3,5)--(1.7,3);


\draw[red,ultra thick] (3.2,3)--(2.3,1);
\draw[red,ultra thick] (3.2,3)--(2.3,3);
\draw[red,ultra thick] (3.2,3)--(2.3,5);

\draw[red,ultra thick] (3.2,3)--(0.3,1);
\draw[red,ultra thick] (3.2,3)to[bend right = 45](0.3,3);
\draw[red,ultra thick] (3.2,3)--(0.3,5);



\end{tikzpicture}
\caption{Construction when $d\gtrapprox 2n/3$ (black lines represent perfect matchings and red lines represent complete bipartite graphs).}
\label{fig5_1}
\end{figure}

\subsection{A construction for $n/2 \leq d \lessapprox n$}\label{d=2n/3case}

As we will see in Section \ref{LB3proof}, the construction of this section is enough to establish {\bf LB3}. But this construction has a somewhat unsatisfying gap when $d$ is close to $n$; we therefore have included the simpler construction of Section \ref{subsection-d>2n/3-construct} to close this gap. 

\begin{theorem} \label{thm-middle-d}
Fix $n, d$ and $\alpha$ with $n/2 \leq d < n$, $nd$ even, 
\begin{equation} \label{middle-d-cond1}
\lc \frac{n-d}{\alpha} \rc + \lf \frac{2d-n}{2} \rf \leq \frac{d}{2} - 1,
\end{equation}
\begin{equation} \label{middle-d-cond2}
\left(\lc \frac{n-d}{\alpha} \rc +1\right) \lc \frac{d}{\lc \frac{n-d}{\alpha} \rc +1 + \lf \frac{2d-n}{2} \rf }\rc \leq \alpha,
\end{equation}
and $\alpha\leq n-d$. There is an $(n,d,\alpha)$-graph $G(n,d,\alpha)$ that has $Z(n-d,\alpha)$ as an induced subgraph.  \end{theorem}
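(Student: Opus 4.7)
The plan is to augment the base graph $G_1(n,d,\alpha)$ of Section~\ref{subsec-base-graph-2} with two further batches of edges. First I would add every edge between $W$ and $Y$, so that $W$ and $Y$ support a complete bipartite graph. Second, I would label the $d$ vertices of $X \cup W$ cyclically as $v_0, v_1, \ldots, v_{d-1}$ so that the cliques $X_1,\ldots,X_\alpha$ of $G_1$ occupy $\alpha$ consecutive blocks in positions $0,\ldots,n-d-1$ (in order of index), and the $W$-vertices occupy positions $n-d,\ldots,d-1$; then I would superimpose on $X \cup W$ a copy of the graph $R(d, 2d-n, g)$ with $g := \lc (n-d)/\alpha \rc$. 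Call the resulting graph $G(n,d,\alpha)$.

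The degree count is straightforward. Each $y \in Y$ keeps its $n-d$ neighbors from $G_1$ and gains $|W|=2d-n$ new neighbors in $W$; each $w \in W$ gains $|Y|=n-d$ neighbors in $Y$ and $2d-n$ neighbors from the $R$-overlay; and each $x \in X$ keeps its $n-d$ $G_1$-neighbors and gains $2d-n$ from the $R$-overlay. So every vertex has final degree $d$, provided the $R$-overlay does not duplicate $G_1$-edges. The graph $R(d,2d-n,g)$ itself is well-defined: the parity side-condition ($2d-n$ odd forces $d$ even) is ensured by the hypothesis that $nd$ is even, and the regularity constraint $g + \lf (2d-n)/2 \rf \leq d/2 - 1$ is exactly \eqref{middle-d-cond1}. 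The specific choice $g = \lc (n-d)/\alpha \rc$ is made precisely so that no edge of $R(d,2d-n,g)$ coincides with a clique edge of $G_1$: each $G_1$-edge inside some clique $X_i$ joins vertices at cyclic distance at most $|X_i|-1 \leq \lc (n-d)/\alpha \rc - 1 = g-1$, whereas every $R$-edge joins vertices at cyclic distance at least $g+1$, or at distance exactly $d/2$ which by \eqref{middle-d-cond1} also strictly exceeds $g$. Because $Y$ is untouched after $G_1$, the subgraph induced on $Y$ remains a copy of $Z(n-d,\alpha)$.

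The main technical step is bounding the independence number of $G(n,d,\alpha)$. Let $I$ be any independent set. Since every $w \in W$ is adjacent to every $y \in Y$, at most one of $I \cap W$ and $I \cap Y$ is nonempty, so either $I \subseteq X \cup Y$ or $I \subseteq X \cup W$. In the first case $I$ is independent in the subgraph of $G_1$ induced on $X \cup Y$, which has independence number $\alpha$ by the argument recorded in Section~\ref{subsec-base-graph-2}. In the second case $I$ is in particular independent in $R(d,2d-n,g)$, so by Lemma~\ref{lem-ind-no-reg-graphs}
$$
|I| \leq (g+1)\lc \frac{d}{g+1+\lf (2d-n)/2 \rf} \rc = \left(\lc \frac{n-d}{\alpha}\rc+1\right)\lc \frac{d}{\lc \frac{n-d}{\alpha}\rc+1+\lf \frac{2d-n}{2}\rf}\rc,
$$
which is at most $\alpha$ by \eqref{middle-d-cond2}. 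Since the induced $Z(n-d,\alpha)$ on $Y$ already witnesses an independent set of size $\alpha$, the independence number equals $\alpha$, completing the construction. The principal obstacle is the careful bookkeeping that ties the single choice $g = \lc (n-d)/\alpha \rc$ to \emph{both} the no-duplication requirement and the independence bound: once this choice is pinned down, hypotheses \eqref{middle-d-cond1} and \eqref{middle-d-cond2} are revealed to be precisely the existence and independence-number conditions that the $R$-overlay must satisfy.
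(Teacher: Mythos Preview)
Your proposal is correct and follows essentially the same construction as the paper: start from $G_1(n,d,\alpha)$, add a complete bipartite graph between $W$ and $Y$, and overlay $R(d,2d-n,\lceil (n-d)/\alpha\rceil)$ on $X\cup W$ with the cliques $X_i$ occupying consecutive blocks. Your verification of regularity, the no-duplication check, and the independence-number bound via Lemma~\ref{lem-ind-no-reg-graphs} all match the paper's argument (and you are, if anything, slightly more explicit about the parity condition and the diametric edge).
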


\medskip

Note that as $n \rightarrow \infty$ with $\alpha, d = \Theta(n)$ (as is the situation in {\bf LB3}), the bound \eqref{middle-d-cond2} is immediate (it becomes $\alpha \geq \Omega(1)$), and \eqref{middle-d-cond1} becomes $d \leq n - o(1)$, justifying the $d \lessapprox n$ in the title of this section. 

\medskip

\begin{proof} (Proof of Theorem \ref{thm-middle-d}.)
As in Section \ref{subsection-d>2n/3-construct} we start out with $G_1(n,d,\alpha)$. We then put a complete bipartite graph between $Y$ and $W$. This ensures that all vertices in $Y$ have degree $(n-d)+(2d-n)=d$, while all vertices in $X \cup W$ have degree $n-d$. We will complete the construction by putting the edges of a $(2d-n)$-regular graph on the vertex set $X \cup W$ (a set of size $d$), that doesn't use any of the currently used edges. Specifically we we add the edges of a copy of $R(d,2d-n,\lc (n-d)/\alpha \rc)$, where the vertices of $X \cup W$ are labeled so that $v_0,\ldots,v_{|X_1|-1}$ are the vertices in $X_1$, the next $|X_2|$ indices cover the vertices in $X_2$, and so on, with the final $2d-n$ indices covering $W$.

Note that $2d-n\geq 0$ (since $d \geq n/2$) and $2d-n < d$ (since $d < n$), and that the evenness of $d(2d-n)$ follows from the evenness of $nd$, so a $(d,2d-n)$-graph exists. To specifically ensure that $R(d,2d-n,\lc (n-d)/\alpha) \rc)$ exists, we also need the condition that
$$
\lc \frac{n-d}{\alpha} \rc + \lf \frac{2d-n}{2} \rf \leq \frac{d}{2} - 1,
$$
which is \eqref{middle-d-cond1}.

Next we check that adding the edges of $R(d,2d-n,\lc (n-d)/\alpha) \rc)$ to $X \cup Y$ does not create a duplicate edge (recall that inside $X$ there are already the edges of $Z(n-d,\alpha)$). We use that in $R(d,2d-n,\lc (n-d)/\alpha) \rc)$, on either side of each vertex in the cyclic ordering $v_0, \ldots, v_{m-1}$, there is a collection of $\lc (n-d)/\alpha \rc$ consecutive vertices that the vertex is not adjacent to. So, noting that the pre-existing edges in $X$ form cliques on consecutive vertices of orders $\lf (n-d)/\alpha \rf$ and $\lc (n-d)/\alpha \rc$, we have our required condition by the choice of $g=\lc (n-d)/\alpha \rc$.

We have completed our construction of a graph $G(n,d,\alpha)$ that is $d$-regular and on $n$ vertices, and that has $Z(n-d,\alpha)$ as an induced subgraph (the subgraph induced by $Y$). 

We finally need to check that $G(n,d,\alpha)$ has no independent set of size greater than $\alpha$. As before, any independent set that includes some vertices from $Y$ has size at most $\alpha$. And any independent set drawn fully from $X \cup W$ is an independent set in $R(d,2d-n,\lc (n-d)/\alpha \rc)$, and so by \eqref{R(m,r,g)bound} has size at most
$$
\left(\lc \frac{n-d}{\alpha} \rc +1\right) \lc \frac{d}{\lc \frac{n-d}{\alpha} \rc +1 + \lf \frac{2d-n}{2} \rf }\rc.
$$
That this is at most $\alpha$ comes from \eqref{middle-d-cond2}.
\end{proof}

Figure \ref{fig5_3} schematically illustrates the construction in the proof of Theorem \ref{thm-middle-d}. 

\begin{figure}[ht!] 
\centering
\begin{tikzpicture}

\node[] at (0,0) {$K_a$};
\node[] at (2,0) {$K_a$};
\node[] at (4,0) {$K_{b}$};
\node[] at (0,2) {$K_a$};
\node[] at (2,2) {$K_a$};
\node[] at (4,2) {$K_b$};
\node[] at (6,0) {$E_{2d-n}$};

\draw[black,ultra thick] (0,0.4)--(0,1.6);
\draw[black,ultra thick] (2,0.4)--(2,1.6);
\draw[black,ultra thick] (4,0.4)--(4,1.6);

\draw[red,ultra thick] (0,0.4)--(2,1.6);
\draw[red,ultra thick] (0,0.4)--(4,1.6);
\draw[red,ultra thick] (2,0.4)--(4,1.6);
\draw[red,ultra thick] (2,0.4)--(0,1.6);
\draw[red,ultra thick] (4,0.4)--(2,1.6);
\draw[red,ultra thick] (4,0.4)--(0,1.6);

\draw[red,ultra thick] (0,1.6)--(5.3,.3);
\draw[red,ultra thick] (2,1.6)--(5.3,.3);
\draw[red,ultra thick] (4,1.6)--(5.3,.3);

\draw[blue,ultra thick] (3.1,0) circle [x radius=3.7, y radius=0.5];
\node[] at (3.1,-1) {\color{blue} $R(d,2d-n,b)$};

\end{tikzpicture}
\caption{Construction when $n/2 \leq d \lessapprox n$ (black lines represent perfect matchings and red lines represent complete bipartite graphs).}
\label{fig5_3}
\end{figure}

\subsection{Proof of {\bf LB3}}\label{LB3proof}

Here we establish {\bf LB3}. Recall that we are given $1/2 \leq c_{\rm deg} < 1$ and $0 \leq c_{\rm ind} \leq 1-c_{\rm deg}$, and that (as observed after the statement of  Theorem \ref{thm-ind-sets-alpha=cn-d=an-a>=1/2}), in the presence of Theorem \ref{thm-ind-sets-alpha=cn-d<=n/2} we may assume that $c_{\rm deg} > 1/2$. 

Let $N$ be the set of even numbers (so that for $n \in N$ we can be sure that $nd$ is always even). Let $(d_n)_{n \in N}$ be any sequence of integers such that $d_n/n \rightarrow c_{\rm deg}$ as $n \rightarrow \infty$ and let $(\alpha_n)_{n \in N}$ be any sequence of integers such that $\alpha_n/n \rightarrow c_{\rm ind}$ as $n \rightarrow \infty$). 
For all sufficiently large $n$ \eqref{middle-d-cond1} and \eqref{middle-d-cond2} both hold, and Theorem \ref{thm-middle-d} shows that there is an $(n,d_n,\alpha_n)$-graph $G_n$ with $Z(n-d_n,\alpha_n)$ as an induced subgraph. This graph satisfies 
$$
i(G_n) \geq i(Z(n-d,\alpha))=k(c_{\rm ind},c_{\rm deg})^{n+o(n)},
$$ 
the equality coming from \eqref{to-show-zykov-large-d}.

\section{Discussion and open questions} \label{sec-discussion}

We have established two approximate results. First, for each $0 < c_{\rm ind} \leq 1/2$ we have found a constant $k(c_{\rm ind})$ with the following property: the greatest number of independent sets admitted by an $n$-vertex $d$-regular graph with maximum independent set size at most $\alpha$, where $\alpha \sim c_{\rm ind}n$, $d \lesssim n/2$ and $d=\omega(1)$, is $k(c_{\rm ind})^{n+o(n)}$. Second, for each $1/2 \leq c_{\rm deg} < 1$ and $0 < c_{\rm ind} \leq 1-c_{\rm deg}$ we have found a constant $k(c_{\rm ind},c_{\rm deg})$ with the following property: the greatest number of independent sets admitted by an $n$-vertex $d$-regular graph with maximum independent set size at most $\alpha$, where $\alpha \sim c_{\rm ind}n$ and $d \sim c_{\rm deg}n$, is $k(c_{\rm ind},c_{\rm deg})^{n+o(n)}$.  

Both of these results are in the spirit of Granville's conjecture \eqref{Granville} and Alon's resolution of the same, for independent sets in $n$-vertex, $d$-regular graphs. An open problem is to obtain a more precise result, in the spirit of Kahn and Zhao's strengthening of \eqref{Granville} to the statement that if $G$ is an $n$-vertex, $d$-regular graph with $2d$ dividing $n$, then $i(G) \leq i(G_{n,d})$, where $G_{n,d}$ is the disjoint union of $n/(2d)$ copies of $K_{d,d}$.
\begin{quest}
Let $(n,d,\alpha)$ be a triple for which $n$-vertex, $d$-regular graphs with maximum independent set sizes at most $\alpha$ exist. Which such graph admits the most independent sets?
\end{quest}

There is an even more fundamental open problem in this direction. As observed earlier, the bound $i(G)\leq i(G_{n,d})$ for $n$-vertex $d$-regular graphs is tight when $2d|n$ and $d \leq n/2$; but when $d > n/2$ it is far from tight. In Claim \ref{clm-(n,d):d>n/2} we have established that 
$$
i(G) \leq 2^{n-d+\log_2 n}
$$
for $n$-vertex $d$-regular graphs $G$ with $n/2 < d < n$ and $nd$ even, and shown that there are examples of $n$-vertex $d$-regular graphs $G$ with $n/2 < d < n$ and $nd$ even that satisfy $i(G) \geq 2^{n-d}$. But it is open to identify, for each pair $(n,d)$ with $n/2 < d < n$ and $nd$ even, the specific $n$-vertex $d$-regular graph that maximizes the number of independent sets. A conjecture has recently been made by Cambie, Goedgebeur and Jooken \cite{CambieGoedgebeurJooken}:
\begin{conjecture} \label{conj-CGJ}
Let $n$ and $d$ with $nd$ even satisfy $n \geq 1$ and $d+1 \leq n \leq 2d$. Among $n$-vertex $d$-regular graphs, the graph maximizing $i(G)$ is the mutual join of copies of $E_{n-d}$ and a graph $H$ whose order is strictly less than $2(n-d)$. 
\end{conjecture}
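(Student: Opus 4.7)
The plan is to translate the conjecture to a clique-counting problem via graph complements. Let $\bar{d} = n-1-d$, so that $0 \leq \bar{d} \leq (n-2)/2$. Taking complements, the conjecture becomes: among $n$-vertex $\bar{d}$-regular graphs $\bar{G}$, the graph maximizing the total number of cliques is a disjoint union of copies of $K_{n-d}$ (noting $n-d = \bar{d}+1$) together with a residual graph $\bar{H}$ on fewer than $2(n-d)$ vertices. This reframing makes clear why the disjoint-cliques structure should be extremal: every clique is localized within a single component.

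The key local identity is that for any vertex $v$, the cliques of $\bar{G}$ containing $v$ are in bijection with cliques of $\bar{G}[N(v)]$ (including the empty clique, which corresponds to $\{v\}$ itself), via $K \mapsto K \setminus \{v\}$. Since the cliques of any graph on $\bar{d}$ vertices are a subset of all $2^{\bar{d}}$ subsets of the vertex set, the maximum of $\sum_{K \ni v} 1/|K|$ over such graphs is attained when $\bar{G}[N(v)]$ is complete, in which case it equals $(2^{n-d}-1)/(n-d)$. Combined with the double counting $c(\bar{G}) - 1 = \sum_v \sum_{\emptyset \ne K \ni v} 1/|K|$ (where $c(\cdot)$ denotes the total clique count), this yields
\[
c(\bar{G}) \;\leq\; 1 + \frac{n(2^{n-d}-1)}{n-d},
\]
with equality iff $N(v)$ is complete for every $v$, which forces $\bar{G}$ to decompose as a disjoint union of $(n-d)$-cliques. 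This already settles the conjecture when $(n-d) \mid n$.

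The substantive case is $(n-d) \nmid n$, where equality above is impossible and one must understand the near-extremal residual structure. The approach I would pursue is to define a local ``deficiency'' $\delta(v) := (2^{n-d}-1)/(n-d) - \sum_{\emptyset \ne K \ni v} 1/|K| \geq 0$ and track how the total deficiency $\sum_v \delta(v) = n(2^{n-d}-1)/(n-d) + 1 - c(\bar{G})$ is distributed across $V(\bar{G})$. One would then set up a compression/edge-swap procedure that preserves $\bar{d}$-regularity and strictly decreases the total deficiency whenever $\bar{G}$ has two vertices $u,v$ with incomplete neighborhoods lying in different ``near-components'' of $\bar{G}$. Iterating should drive $\bar{G}$ to a form in which all deficiency is concentrated on a single small subset $\bar{H}$, with everything else partitioned into disjoint $(n-d)$-cliques. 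The bound $|\bar{H}| < 2(n-d)$ would then follow from the observation that a residual of size $\geq 2(n-d)$ could be further compressed to split off another $K_{n-d}$.

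The main obstacle is unmistakably the compression step: regularity-preserving edge swaps (of the form: replace $xy, uw$ by $xu, yw$ when the latter are non-edges) are plentiful, but arranging a monotone decrease of the deficiency is delicate because completing one neighborhood can introduce new non-edges elsewhere. This is directly analogous to the difficulty Kahn faced in extending his bipartite proof of Granville's conjecture to general regular graphs, which was resolved only later by Zhao's bipartite double-cover reduction; it is not obvious that an analogous trick is available here, since the complement is in the low-degree regime $\bar{d} < n/2$ and there is no natural ``bipartization'' that preserves the extremal structure. A plausible alternative is an induction on $\bar{d}$ via vertex-deletion, peeling off one $(\bar{d}+1)$-clique at a time, but one must then maintain a stronger inductive hypothesis covering both the main term and the residual. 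An entropy argument in the spirit of Kahn--Madiman--Tetali, applied to a carefully chosen distribution on cliques of $\bar{G}$, could potentially yield the sharp bound in one stroke but would likely require new ideas to capture the residual contribution.
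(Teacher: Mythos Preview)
The statement you are attempting to prove is Conjecture~\ref{conj-CGJ}, which the paper records as an \emph{open problem} due to Cambie, Goedgebeur and Jooken; the paper does not offer a proof, so there is nothing to compare your argument against.

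On the merits of your proposal itself: the complement reformulation and the double-counting identity
\[
c(\bar{G})-1=\sum_{v}\sum_{\emptyset\neq K\ni v}\frac{1}{|K|}\leq \frac{n(2^{n-d}-1)}{n-d}
\]
are correct, and the characterisation of equality (every neighbourhood complete, hence $\bar{G}$ a disjoint union of $K_{n-d}$'s) genuinely settles the case $(n-d)\mid n$. That part is a clean argument.

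For $(n-d)\nmid n$, however, what you have is not a proof but a programme, and you say as much. The proposed regularity-preserving edge-swap scheme with a monotone deficiency functional is exactly where the known difficulty lies: such swaps can complete one neighbourhood while damaging another, and you have not exhibited a potential function that strictly decreases. The analogy you draw with the Kahn--Zhao situation is apt precisely because no substitute for Zhao's bipartite-double-cover trick is known in this low-degree complement regime. Absent a concrete swap rule together with a verified monotonicity, the non-divisible case remains a gap, and the conjecture stays open.
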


Another direction to pursue is to consider not the total count of independent sets, but the number of independent sets of various fixed sizes. For $n$-vertex $d$-regular graphs this is a well-studied problem, with the best results to date due to Davies, Jenssen and Perkins \cite{DaviesJenssenPerkins}. The techniques of this paper can easily be used to give some results in this direction for $n$-vertex $d$-regular graphs with maximum independent set size at most $\alpha$, owing to the fact that Zykov's Theorem (Theorem \ref{thm-Zykov}) addresses independent set of each fixed size. For example, 
\begin{itemize}
\item for each $0 < c_{\rm ind} \leq 1/2$ and $0 < t \leq c_{\rm ind}$ we can exhibit an optimal constant $k(c_{\rm ind},t)$ such that the maximum number of independent sets of size $tn+o(n)$ in an $n$-vertex, $d$-regular graph with maximum independent size at most $\alpha$, where $\alpha = c_{\rm ind}n +o(n)$, is $k(c_{\rm ind},t)^{n+o(n)}$, and
\item for each $1/2 \leq c_{\rm deg} < 1$, $0 < c_{\rm ind} \leq 1-c_{\rm deg}$ and $0 < t \leq c_{\rm ind}$ we can exhibit an optimal constant $k(c_{\rm ind},c_{\rm deg},t)$ such that the maximum number of independent sets of size $tn+o(n)$ in an $n$-vertex, $d$-regular graph with maximum independent size at most $\alpha$, where $d=c_{\rm deg}n+o(n)$ and $\alpha = c_{\rm ind}n +o(n)$, is $k(c_{\rm ind},c_{\rm deg},t)^{n+o(n)}$.
\end{itemize}
We omit the details.

A final open question concerns our constructions in Section \ref{sec-constructions-d<=n/2} and \ref{sec-constructions-d>=n/2}. 
\begin{quest}
Is it the case that for every triple $(n, d, \alpha)$ for which
there exists an $n$-vertex, $d$-regular graph with maximumum independent set size at most $\alpha$, there exists one that has $Z(\lf n/2 \rf , \alpha)$ as an induced subgraph (if $d \leq n/2$) or has $Z(n - d,\alpha)$
as an induced subgraph (if $d \geq n/2$), and that has maximum independent set size at most $\alpha$?
\end{quest}
We have answered this question for enough triples $(n, d, \alpha)$ to establish {\bf LB1} and {\bf LB3}, but not for all such triples.

\end{document}